\definecolor{darkgreen}{rgb}{0,0.5,0}
\definecolor{darkred}{rgb}{0.5,0,0}
\definecolor{darkblue}{rgb}{0.004,0.396,0.741}
\definecolor{gcolor}{rgb}{0.004,0.396,0.741}	
\definecolor{warning}{rgb}{1.0,0.6,0.0}	
\definecolor{gray}{rgb}{0.6,0.6,0.6}
\theoremstyle{plain}
\newtheorem{theorem}{Theorem}[section]
\newtheorem{lemma}{Lemma}[section]
\newtheorem{corollary}{Corollary}[section]
\theoremstyle{definition}
\theoremstyle{remark}
\newtheorem{remark}{Remark}[section]
\newtheorem*{note}{Remark}
\begin{document}

\title[A Stirling-type formula related to longest increasing subsequences]{A Stirling-type formula for the distribution of the length of longest increasing subsequences}
\author{Folkmar Bornemann}
\address{Department of Mathematics, TU München, Germany}
\email{bornemann@tum.de}


\begin{abstract}
The discrete distribution of the length of longest increasing subsequences in random permutations of $n$ integers is deeply related to random matrix theory. In a seminal work, Baik, Deift and Johansson provided an asymptotics in terms of the distribution of the scaled largest level of the large matrix limit of GUE. As a numerical approximation, however, this asymptotics is inaccurate for small $n$ and has a slow convergence rate, conjectured to be just of order $n^{-1/3}$.
Here, we suggest a different type of approximation, based on Hayman's generalization of Stirling's formula. Such a formula gives already a couple of correct digits of the length distribution for $n$ as small as $20$ but allows numerical evaluations, with a uniform error of apparent order $n^{-2/3}$, for $n$ as large as $10^{12}$; thus closing the gap between a table of exact values (compiled for up to $n=1000$) and the random matrix limit.
Being much more efficient and accurate than Monte-Carlo simulations, the Stirling-type formula allows for a precise numerical understanding of the first few finite size correction terms to the random matrix limit. From this we derive expansions of the expected value and variance of the length, exhibiting several more terms than previously put forward.
\end{abstract}

\keywords{Random permutations, random matrices, $H$-admissibility, Stirling-type formula}
\subjclass[2010]{05A16, 60B20, 30D15, 47N40}

\maketitle

\section{Introduction} 

As witnessed by a number of outstanding surveys and monographs (see, e.g., \cite{MR1694204, MR3468920, MR3468738, MR2334203}), a surprisingly rich topic in combinatorics and probability theory, deeply related to representation theory and to random matrix theory, is the study of the lengths $L_n(\sigma)$ of longest increasing subsequences of permutations $\sigma$ on the set $[n] = \{1,2,\ldots,n\}$ and of the behavior of their distribution in the limit $n\to\infty$. Here, $L_n(\sigma)$ is defined as the maximum of all $k$ for which there are $1\leq i_1 < i_2 < \cdots < i_k \leq n$ such that $\sigma_{i_1} < \sigma_{i_2} < \cdots < \sigma_{i_k}$. Writing permutations in the form $\sigma = (\sigma_1\, \sigma_2\,\cdots\, \sigma_n)$ we get, e.g.,  $L_9(\sigma)= 5$ for $\sigma = (4\,{\bf 1}\, {\bf 2}\, 7\, 6\, {\bf 5}\, {\bf 8}\,{\bf 9} \,3)$, where  one of the longest increasing subsequences has been highlighted. Enumeration of the permutations with a given $L_n$ can be encoded probabilistically: by equipping the symmetric group on~$[n]$ with the uniform distribution, $L_n$ becomes a discrete random variable with cumulative probability distribution (CDF) $\prob( L_n \leq l)$ and probability distribution (PDF) $\prob( L_n = l)$.

\subsection*{Constructive combinatorics.} Using the Robinson--Schensted correspondence \cite{MR121305}, one gets the distribution of $L_n$ in the following form (see, e.g., \cite[§§3.3--3.7]{MR843332}):
\begin{equation}\label{eq:hook}
\prob( L_n \leq l) = \frac1{n!} \sum_{\lambda \,\vdash n\,:\, l_\lambda \leq l} d_\lambda^2.
\end{equation}
Here $\lambda \,\vdash n$ denotes an integer partition $\lambda_1\geq \lambda_2 \geq \cdots \geq \lambda_{l_\lambda}> 0$ of $n=\sum_{j=1}^{l_\lambda} \lambda_j$ and $d_\lambda$ is the number of standard Young tableaux of shape $\lambda$, as given by the hook length formula. By generating all partitions $\lambda\,\vdash n$, in 1968 Baer and Brock \cite{MR228216} computed tables of $\prob( L_n = l)$ up to $n=36$; in 2000 Odlyzko and Rains \cite{MR1771285} for $n=15,30,60,90,120$ (the tables are online, see \cite{OdlyzkoTable}), reporting a computing time for $n=120$ of about 12 hours (here $p_n$, the number of partitions, is of size $1.8\times 10^9$). This quickly becomes infeasible,\footnote{See Sect.~\ref{subsect:exact} below for a method to compute the exact rational values of the distribution of $L_n$ based on random matrix theory, which has been used by the author to tabulate $\prob(L_n=l)$, $1\leq l\leq n$, up to $n=1000$.} as $p_n$ is already as large as $2.3\cdot 10^{14}$ for $n=250$. Another use of the combinatorial methods is the approximation of the distribution of $L_n$ by Monte Carlo simulations \cite{MR228216, MR1771285}: one samples random permutations~$\sigma$ and calculates $L_n(\sigma)$ by the Robinson--Schensted correspondence.\footnote{In {\em Mathematica}, a single trial is generated by the command 
\[
{\text {\tt Length@LongestOrderedSequence[PermutationList[RandomPermutation[n],n]]}}.
\]\\*[-7mm]
}

\begin{figure}[tbp]
\includegraphics[width=0.455\textwidth]{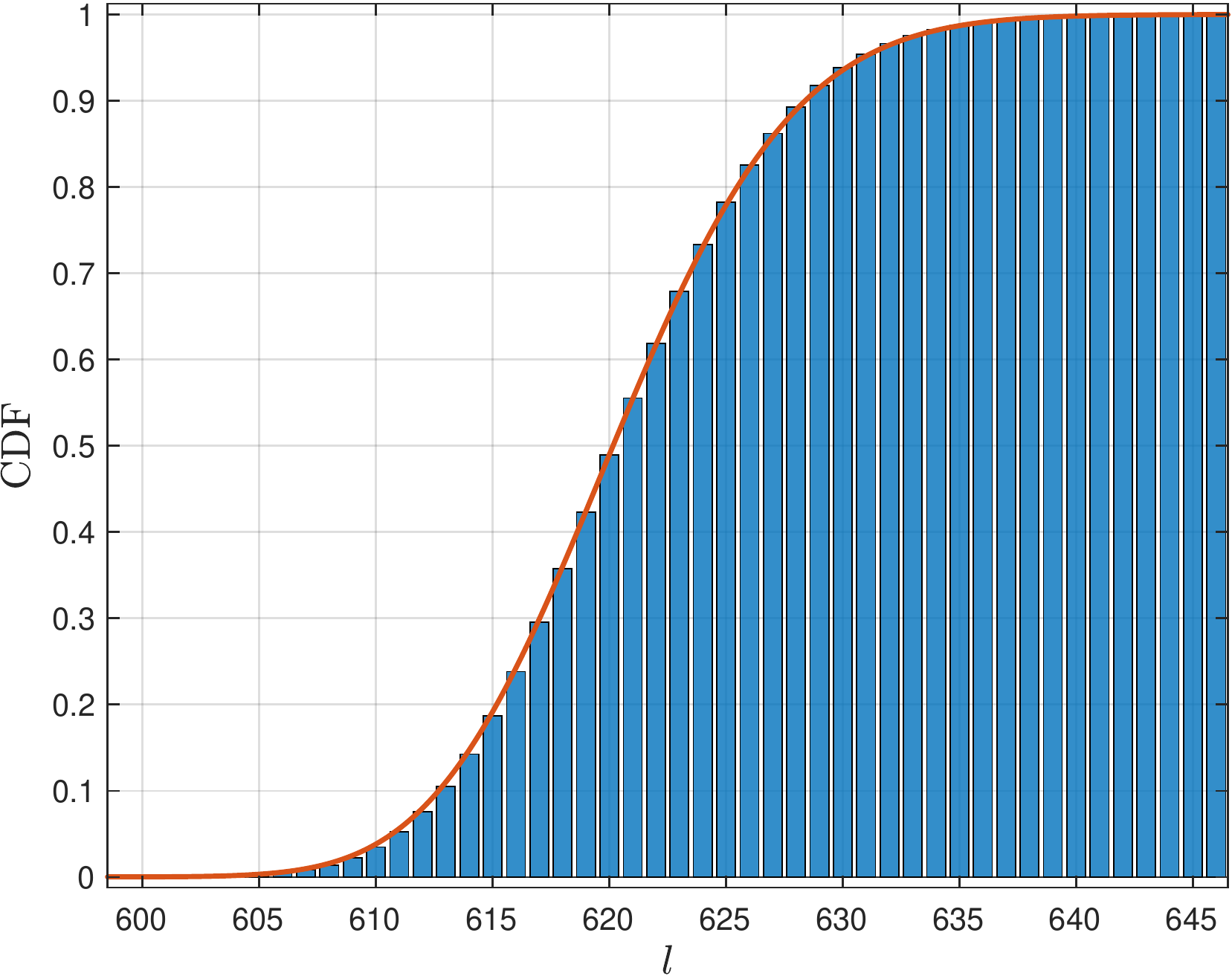}\hfil
\includegraphics[width=0.455\textwidth]{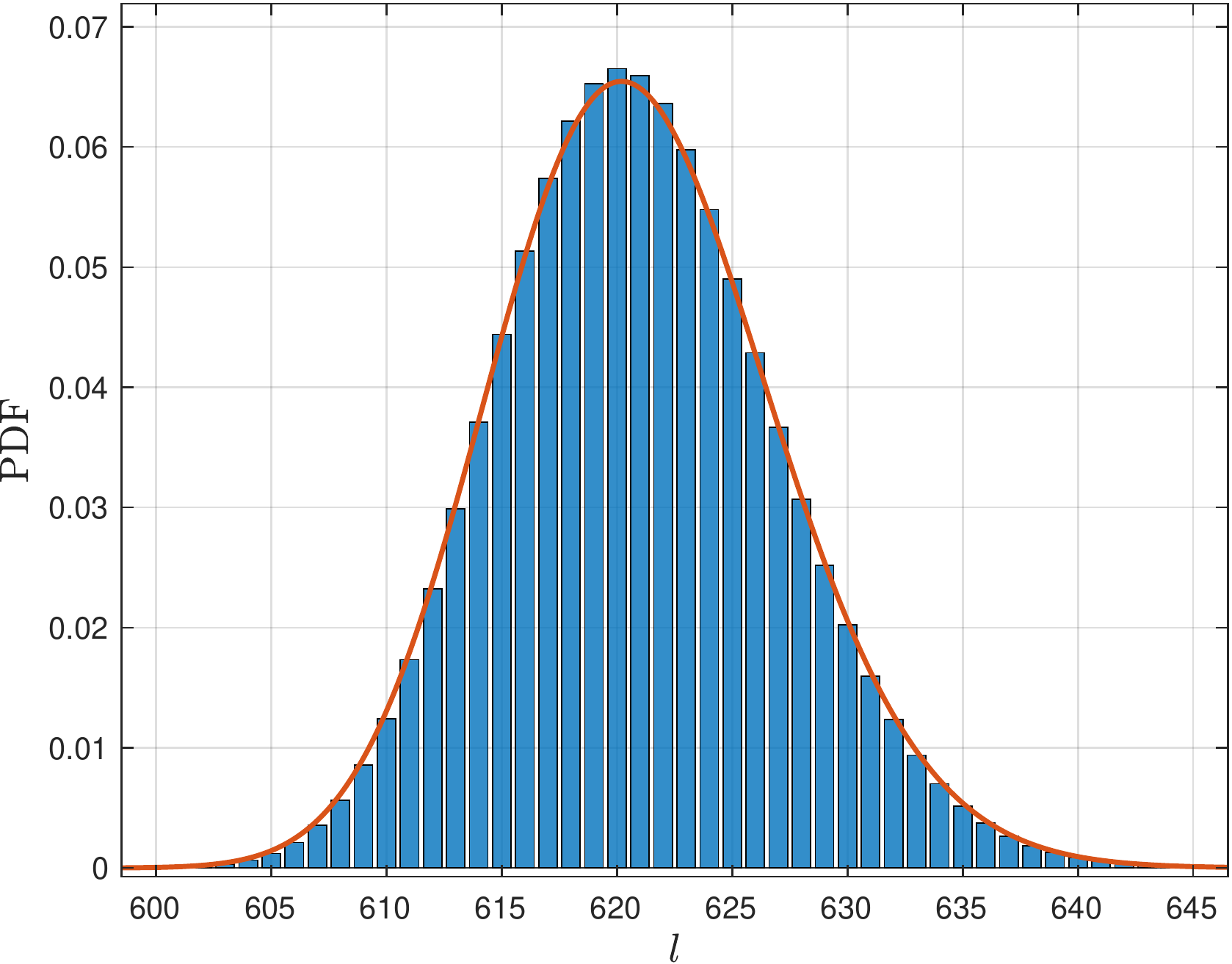}
\caption{{\footnotesize Discrete distribution of $L_n$ for $n=10^5$ near its mode vs. the random matrix limit given by the leading order terms in~\eqref{eq:BDJ99} and \eqref{eq:PDFexpansion} (solid red line); here and in the figures below, discrete distributions are shown as blue bars centered at the integers. Left: CDF $\prob( L_n \leq l)$; right: PDF $\prob( L_n = l)$. The discrete distributions were computed using the Stirling-type formula \eqref{eq:stirling}, with additive errors estimated to be smaller than $10^{-5}$, cf. Fig.~\ref{fig:error}, which is well below plotting accuracy.}}
\label{fig:pdf100000}
\end{figure}

\subsection*{Analytic combinatorics and the random matrix limit.} For analytic methods of enumeration the starting points is a more or less explicit representation of a suitable generating function; here, the suitable one turns out to be the exponential generating function of the CDF $\prob( L_n \leq l)$, when considered as a sequence of $n$ with the length $l$ fixed:
\[
f_l(z) := \sum_{n=0}^\infty \frac{\prob( L_n \leq l)}{n!}z^n \qquad (z\in\C,\, l \in \N).
\]
(We note that $f_l$ is an entire function of exponential type.) In fact,  Gessel \cite[p.~280]{Gessel90} obtained in 1990 the explicit representation 
\begin{equation}\label{eq:Gessel}
f_l(z^2) = D_l(z),\qquad D_l(z) :=\det_{j,k=1}^l I_{j-k}(2z),
\end{equation}
in terms of a Toeplitz determinant of the modified Bessel functions $I_m$, $m\in \Z$, which are entire functions of exponential type themselves. By relating, first, the Toeplitz determinant to the machinery of Riemann--Hilbert problems to study a double-scaling limit of the generating function and by using, next, a Tauberian theorem\footnote{The Tauberian part (``de-Poissonization'') makes it hard to get more than the leading order of the asymptotics.}  to induce from that limit an asymptotics of the coefficients, Baik, Deift and Johans\-son~\cite{MR1682248} succeeded 1999 in establishing\footnote{In the first place,  \cite[Thm.~1.1]{MR1682248} states the following limit to hold {\em pointwise} in $t\in \R$:
\[
\lim_{n\to\infty} \prob\left(\frac{L_n-2\sqrt{n}}{n^{1/6}}\leq t\right) = F_2(t).
\]
However, since the limit distribution $F_2$ is continuous, by a standard Tauberian follow-up \cite[Lemma 2.1]{MR1652247} of the Portmanteau theorem in probability theory, this convergence is known to hold, in fact, {\em uniformly} in $t$.
} 
\begin{equation}\label{eq:BDJ99}
\prob( L_n \leq l) = F_2\left(\frac{l-2\sqrt{n}}{n^{1/6}}\right) + o(1) \qquad (n\to\infty),
\end{equation}
uniformly in $l\in\N$; it will be called the {\em random matrix limit} of the length distribution throughout this paper since $F_2(t)$ denotes the Tracy--Widom distribution for $\beta=2$ (that is, the probability that in the soft-edge scaling limit of the Gaussian unitary ensemble (GUE) the scaled largest eigenvalue is bounded from above by $t$). This distribution can be evaluated numerically based on its representation either in terms of the Airy kernel determinant \cite{MR1236195} or in terms of the Painlevé-II transcendent \cite{MR1257246}; see Remark~\ref{rem:F2prime} and \cite{MR2895091} for details. 

As impressive as the use of the limit \eqref{eq:BDJ99} might look as a numerical approximation to the distribution of $L_n$ near its mode for larger $n$, cf. Fig.~\ref{fig:pdf100000}, there are two notable deficiencies: first, since the error term in \eqref{eq:BDJ99} is additive (i.e., w.r.t. absolute scale), the approximation is rather poor for $l \ll 2\sqrt{n}$; second, the convergence rate is rather slow, in fact conjectured to be just of the order $O(n^{-1/3})$; see \cite{arxiv.2205.05257} and the discussion below. Both deficiencies are well illustrated in Table~\ref{tab:1} for $n=20$ and in Fig.~\ref{fig:pdf1000} for $n=1000$.

\begin{table}[tbp]
\caption{The values of $\prob( L_n \leq l)$ and various of its approximations for $n=20$. The Monte Carlo simulation was run with $T=5\cdot 10^6$ samples. Clearly observable is a multiplicative (i.e., relative) error of already less than $1\%$ for the Stirling-type formula \eqref{eq:stirling} as well as an additive (i.e., absolute) error of order $4\cdot 10^{-4} \approx T^{-1/2}$ for Monte Carlo and
an additive error of order $10^{-1} \approx n^{-1/3}$ for the limit \eqref{eq:BDJ99} from random matrix theory.}
\label{tab:1}
{\footnotesize
\begin{tabular}{clllc}
\hline\noalign{\smallskip}
$l$ & $\;\prob( L_{20} \leq l)$ & Stirling-type \eqref{eq:stirling} & Monte-Carlo & random matrix \eqref{eq:BDJ99}\\
\noalign{\smallskip}\hline\noalign{\smallskip}
1 & $4.110\cdot 10^{-19}$&   \quad $4.119\cdot 10^{-19}$&   \quad\, $0.000$      &   $6.282\cdot 10^{-5}$   \\
2 & $2.698\cdot 10^{-9}$ &   \quad $2.703\cdot 10^{-9}$ &   \quad\, $0.000$      &   $1.422\cdot 10^{-3}$   \\ 
3 & $6.698\cdot 10^{-5}$ &   \quad $6.710\cdot 10^{-5}$ &   $7.240\cdot 10^{-5}$ &   $1.485\cdot 10^{-2}$   \\ 
4 & $1.090\cdot 10^{-2}$ &   \quad $1.092\cdot 10^{-2}$ &   $1.089\cdot 10^{-2}$ &   $8.014\cdot 10^{-2}$   \\ 
5 & $1.427\cdot 10^{-1}$ &   \quad $1.429\cdot 10^{-1}$ &   $1.428\cdot 10^{-1}$ &   $2.503\cdot 10^{-1}$   \\ 
6 & $4.841\cdot 10^{-1}$ &   \quad $4.846\cdot 10^{-1}$ &   $4.838\cdot 10^{-1}$ &   $5.079\cdot 10^{-1}$   \\ 
7 & $8.042\cdot 10^{-1}$ &   \quad $8.064\cdot 10^{-1}$ &   $8.040\cdot 10^{-1}$ &   $7.513\cdot 10^{-1}$   \\ 
8 & $9.521\cdot 10^{-1}$ &   \quad $9.581\cdot 10^{-1}$ &   $9.519\cdot 10^{-1}$ &   $9.041\cdot 10^{-1}$   \\ 
9 & $9.921\cdot 10^{-1}$ &   \quad $9.996\cdot 10^{-1}$ &   $9.920\cdot 10^{-1}$ &   $9.716\cdot 10^{-1}$   \\ 
\noalign{\smallskip}\hline
\end{tabular}}
\end{table}

\subsection*{A Stirling-type formula}

In this paper we suggest a different type of numerical approximation to the distribution of $L_n$ that enjoys the following advantages: (a) it has a small multiplicative (i.e., relative) error, (b) it has faster convergence rates, apparently even faster than \eqref{eq:BDJ99} with its first finite size correction term added, and (c) it is much faster to compute than Monte Carlo simulations. In fact, the distribution of $L_n$ for $n=10^5$, as shown in Fig.~\ref{fig:pdf100000}, exhibits an estimated  maximum additive error of less than $10^{-5}$ and took just about five seconds to compute; whereas Forrester and Mays \cite{arxiv.2205.05257} have recently reported a computing time of about 14 hours to generate $T = 5\cdot 10^6$ Monte Carlo trials for this $n$; the error of such a simulation is expected to be of the order $1/\sqrt{T} \approx 4\cdot 10^{-4}$. 

Specifically, we use Hayman's generalization \cite{Hayman56} of Stirling's formula for {\em $H$-admissible} functions; for expositions see \cite{MR2483235,MR1373678,Wong89}. For simplicity, as is the case here for $f(z)=f_l(z)$, assume that
\[
f(z) = \sum_{n=0}^\infty a_n z^n \qquad (z \in \C)
\]
is an entire function with positive coefficients $a_n$ and consider the real auxiliary functions
\[
a(r) = r \frac{d}{dr} \log f(r),\qquad b(r) = r \frac{d}{dr} a(r)\qquad (r>0).
\]
If $f$ is $H$-admissible, then for each $n\in \N$ the equation $a(r_n) = n$ has a unique solution $r_n > 0$ such that $b(r_n)>0$ and the following generalization of Stirling's formula\footnote{A generalization of Stirling's classical formula, indeed: for the $H$-admissible function $f(z)=e^z$, cf. Thm.~\ref{thm:hayman} Criterion II.g, we have $a_n=1/n!$, $b(r)=a(r)=r$, $r_n=n$ and \eqref{eq:hayman} specifies to
\[
\frac{1}{n!} = \frac{e^n}{n^n \sqrt{2\pi n}}(1+ o(1)) \qquad (n\to \infty).
\]
As in Table~\ref{tab:1}, the error is already below $1\%$ for $n$ as small as $n=20$.} holds true:
\begin{equation}\label{eq:hayman}
a_n = \frac{f(r_n)}{r_n^n \sqrt{2\pi\, b(r_n)}}(1+ o(1)) \qquad (n\to\infty).
\end{equation}
We observe that the error is multiplicative here.
In Thm.~\ref{thm:main} we will prove, using some theory of entire functions, the $H$-admissibility of the generating functions $f_l$. Hence the Stirling-type formula \eqref{eq:hayman} applies without further ado to their coefficients 
$\prob(L_n \leq l)/n!$. Since the error is multiplicative, nothing changes if we multiply the approximation by $n!$ and we get
\begin{equation}\label{eq:stirling}
\prob(L_n \leq l) = \frac{n! \cdot f_l(r_{l,n})}{r_{l,n}^n \sqrt{2\pi\, b_l(r_{l,n})}}(1+ o(1)) \qquad (n\to\infty),
\end{equation}
where we have labeled all quantities when applied to $f=f_l$ by an additional index $l$. An approximation to $\prob(L_n=l)$ is then obtained simply by taking differences. The power of these approximations, if used as a numerical tool even for $n$ as small as $n=20$, is illustrated in Table~\ref{tab:1} and Figs.~\ref{fig:pdf100000}--\ref{fig:error}, as well as in Table~\ref{tab:2} below.

\begin{figure}[tbp]
\includegraphics[width=0.455\textwidth]{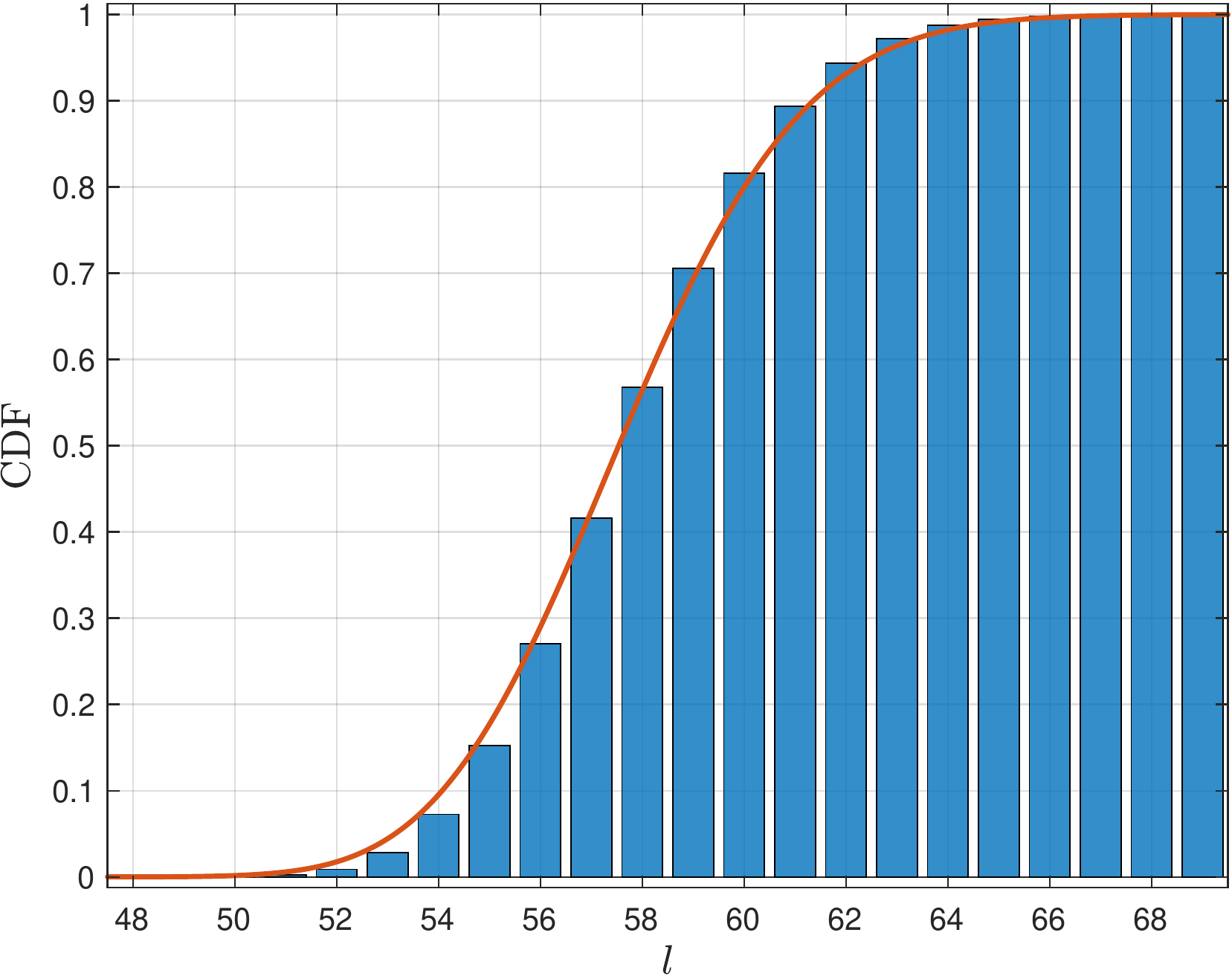}\hfil
\includegraphics[width=0.455\textwidth]{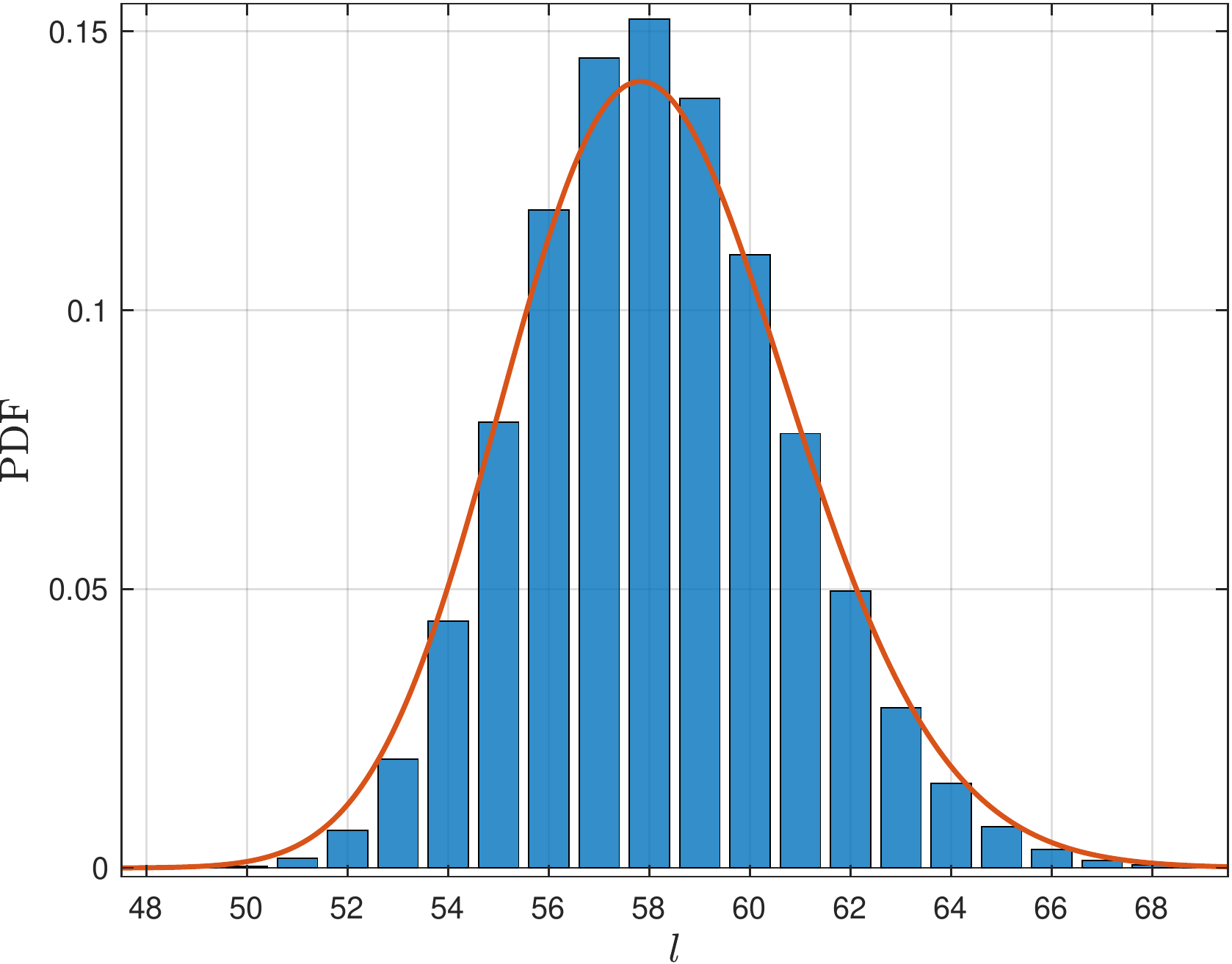}
\caption{{\footnotesize Display of the notable inaccuracy of the random matrix limit \eqref{eq:BDJ99} for $n=1000$ (see the contrast with Fig.~\ref{fig:pdf100000} for $n=10^5$). The discrete distribution of $L_n$ is shown near its mode vs. the random matrix limit given by the leading order terms in~\eqref{eq:CDFexpansion} and \eqref{eq:PDFexpansion} (solid red line). Left: CDF $\prob( L_n \leq l)$; right: PDF $\prob( L_n = l)$. The exact values of the distribution of $L_n$ and their approximation by the Stirling-type formula~\eqref{eq:stirling} differ just by additive errors of the order $10^{-4}$ (see Fig.~\ref{fig:error}), which is well below plotting accuracy.}}
\label{fig:pdf1000}
\end{figure}

\subsection*{Numerical evaluation of the generating function}

For the Stirling-type formula \eqref{eq:stirling} to be easily accessible in practice, we require an expression for $f_l(r)$ that can be numerically evaluated, for $r>0$, in a stable, accurate, and efficient fashion. Since the direct evaluation of the Toeplitz determinant \eqref{eq:Gessel} is numerically highly unstable, and has a rather unfavorable  complexity of $O(l^3)$ for larger $l$, we look for alternative representations. One option---used in \cite{MR2754188} to numerically extract $\prob(L_n\leq l)$ from $f_l(z)$ by Cauchy integrals over circles in the complex plane that are centered at the origin with the same radius $r_{l,n}$ as in \eqref{eq:stirling}---is the machinery, cf., e.g., \cite{MR1935759}, to transform Toeplitz determinants into Fredholm determinants which are then amenable for the numerical method developed in \cite{MR2600548}. However, since we need the values of the generating function $f_l(r)$ for real $r>0$ only, there is a much more efficient option, which comes from yet another connection to random matrix theory.

To establish this connection we first note that an exponentially generating function $f(r)$ of a sequence of probability distributions has a probabilistic meaning if multiplied by $e^{-r}$: a process called {\em Poissonization}. Namely, if the draws from the different permutation groups are independent and if we take $N_r \in \N_0 :=\{0,1,2,3,\ldots\}$ to be a further independent random variable with Poisson distribution of intensity $r\geq 0$, we see that
\begin{equation}\label{eq:poissonization}
\prob(L_{N_r} \leq l) = \sum_{n=0}^\infty \frac{e^{-r} r^n}{n!} \prob(L_n \leq l) = e^{-r} f_l(r) 
\end{equation}
is, for fixed $r\geq 0$, the cumulative probability distribution of the composite discrete random variable~$L_{N_r}$. On the other hand, for fixed $l\in \N$, also $e^{-r} f_l(r)$ turns out to be a probability distribution w.r.t. the continuous variable $r\geq 0$: specifically, in terms of precisely the Toeplitz determinant~\eqref{eq:Gessel}, Forrester and Hughes~\cite[Eq.~(3.33)]{MR1303076} arrived in 1994 at the representation
\begin{equation}\label{eq:forrester}
 E^{\text{(hard)}}_2(0; [0,4r],l) = e^{-r} f_l(r).
\end{equation}
Here, $E^{\text{(hard)}}_2(0; [0,t],l)$ denotes the probability that, in the hard-edge scaling limit of the Laguerre unitary ensemble (LUE) with parameter $l$, the smallest eigenvalue is bounded from below by $t\geq 0$. Now, the point here is that this distribution can be evaluated numerically, stable and accurate with a complexity that is largely independent of $l$, based on two alternative representations: either in terms of the Bessel kernel determinant \cite{MR1236195} or in terms of the Jimbo--Miwa--Okamoto $\sigma$-form of the Painlevé-III transcendent \cite{MR1266485}; see \cite{MR2895091} for details. We will show in Sect.~\ref{sect:numerical} that the auxiliary functions $a_l(r)$ and $b_l(r)$ fit into both frameworks, too.

\begin{figure}[tbp]
\includegraphics[width=0.465\textwidth]{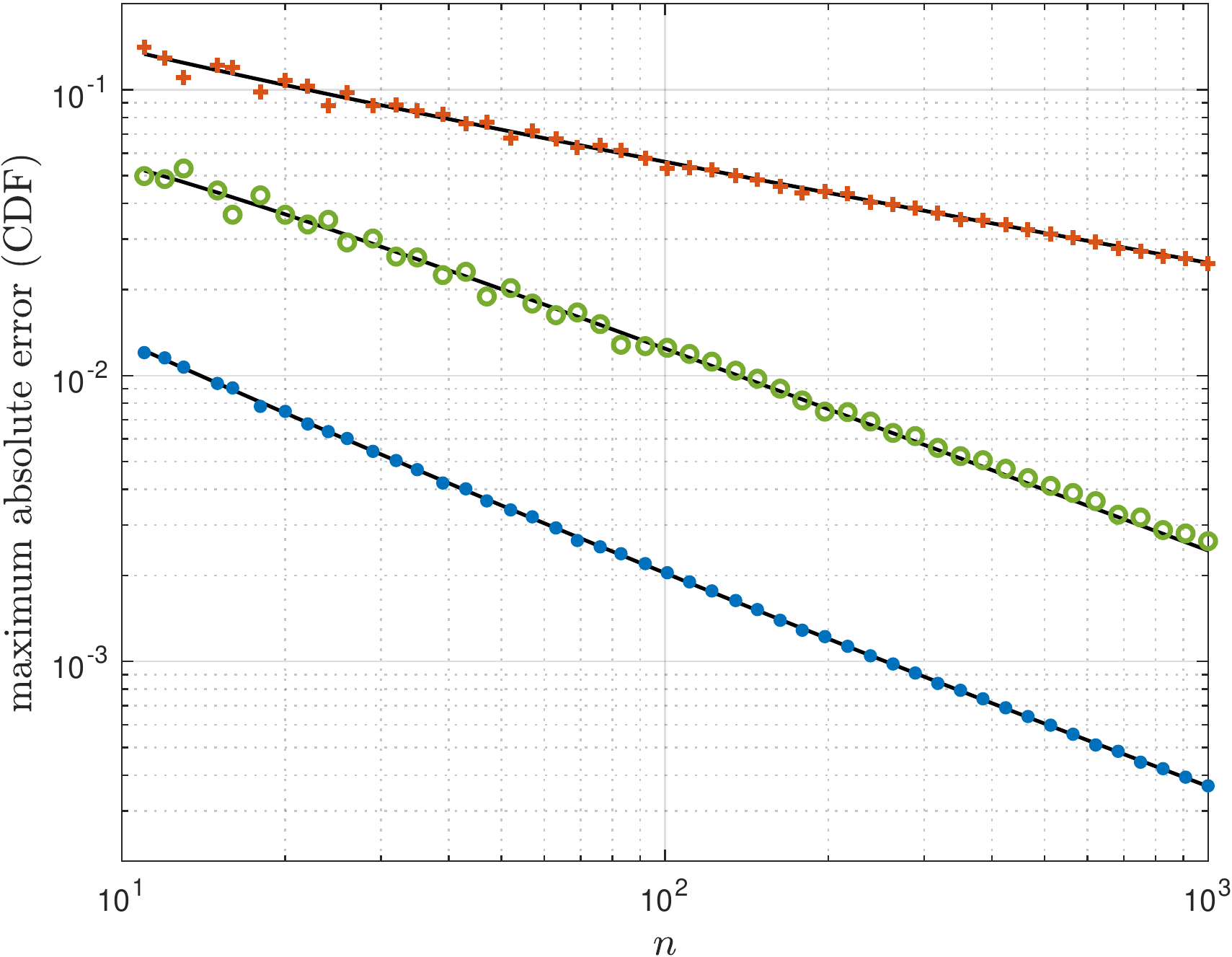}\hfil
\includegraphics[width=0.465\textwidth]{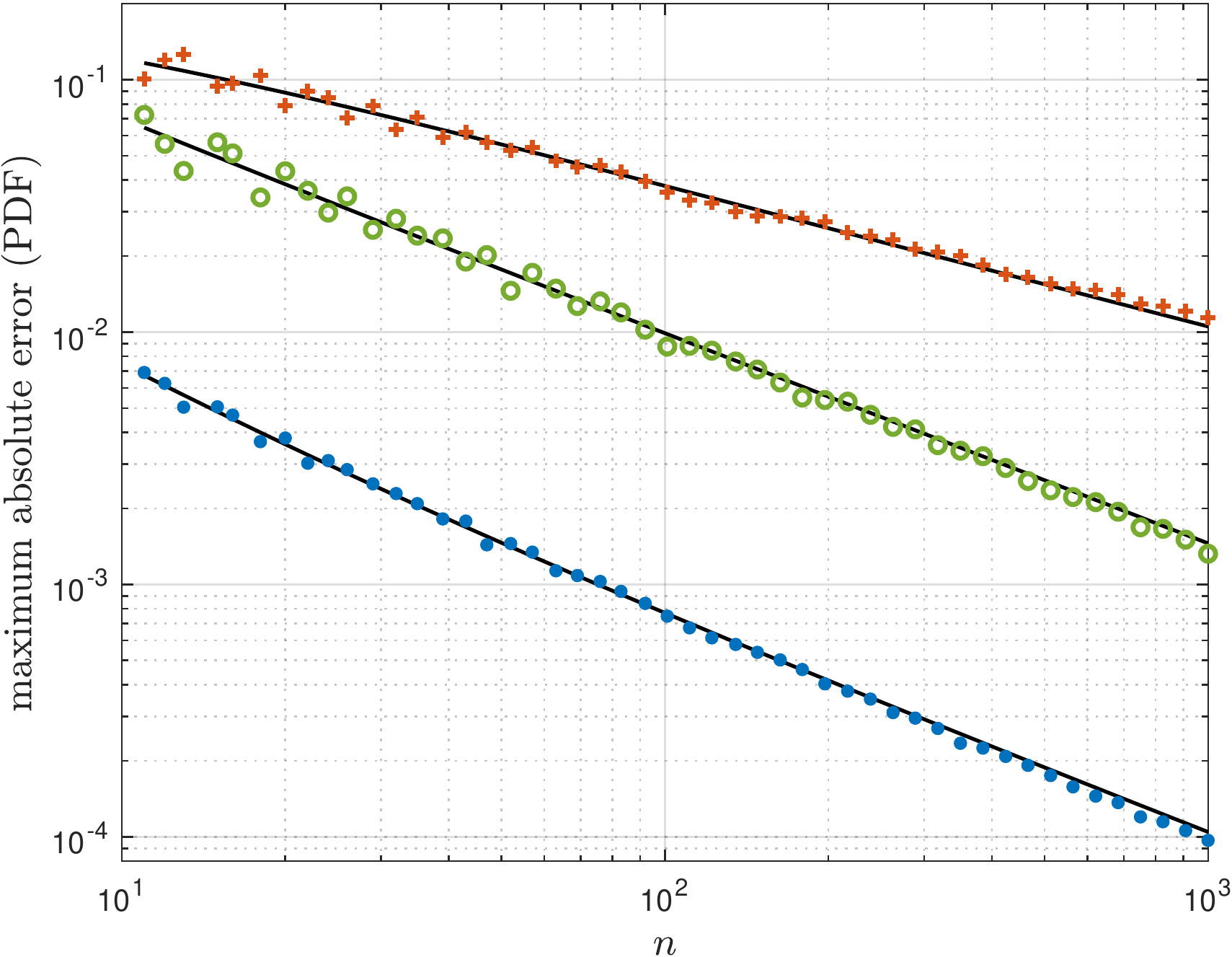}
\caption{{\footnotesize Maximum absolute (i.e., additive) errors of various approximations to (left panel) the CDF $\prob(L_n\leq l)$ and (right panel) the PDF $\prob(L_n= l)$ in a double logarithmic scaling, based on tabulated exact values up to $n=1000$, cf. Sect.~\ref{subsect:exact}; solid lines are fits of the form $c_1 n^{-\alpha_1} + c_2 n^{-\alpha_2} + c_3 n^{-\alpha_3}$ (CDF) and $n^{-1/6}\cdot(c_1 n^{-\alpha_1} + c_2 n^{-\alpha_2} + c_3 n^{-\alpha_3})$ (PDF) to the points in display. Red $+$: error of leading order terms (random matrix limit) in~\eqref{eq:CDFexpansion}, \eqref{eq:PDFexpansion}; $\alpha = (\tfrac13,\tfrac23,1)$. Green~$\circ$: error of expansions~\eqref{eq:CDFexpansion}, \eqref{eq:PDFexpansion} truncated after the first finite size correction term; $\alpha=(\tfrac23, 1, \tfrac43)$, where $F_{2,1}$ has been approximated as in Fig.~\ref{fig:1st}. Blue $\bullet$: error of the Stirling-type formula \eqref{eq:stirling}; $\alpha = (\tfrac23, 1, \tfrac43)$.}}
\label{fig:error}
\end{figure}

\subsection*{Finite size corrections to the random matrix limit}

In a double logarithmic scaling, a plot of the additive errors (taking the maximum w.r.t. $l\in\{1,2,\ldots,n\}$) in approximating the distribution $\prob(L_n\leq l)$ by either the random matrix limit \eqref{eq:BDJ99} or by the Stirling-type formula~\eqref{eq:stirling} exhibits nearly straight lines; see Fig.~\ref{fig:error} for $n$ between $10$ and $1000$.
Fitting the data in display to a model of the form $c_1 n^{-\alpha_1} + c_2 n^{-\alpha_2} + c_3 n^{-\alpha_3}$ with simple triples $\alpha=(\alpha_1, \alpha_2, \alpha_3)$ of rationals strongly suggests that, uniformly in $l \in \{1,2,\ldots,n\}$ as $n\to \infty$,
\begin{subequations}\label{eq:errorterm}
\begin{align}
\prob( L_n \leq l) &= F_2\left(\frac{l-2\sqrt{n}}{n^{1/6}}\right) + O(n^{-1/3}),\label{eq:BDJLandauO}
\\*[2mm]
\prob( L_n \leq l) &= \frac{n! \cdot f_l(r_{l,n})}{r_{l,n}^n \sqrt{2\pi\, b_l(r_{l,n})}} + O(n^{-2/3}).\label{eq:StirlingLandauO}
\end{align}
\end{subequations}
The approximation order (and the size of the implied constant) in \eqref{eq:StirlingLandauO} is much better than the one in  \eqref{eq:BDJLandauO} so that the Stirling-type formula can be used to reveal the structure of the $O(n^{-1/3})$ term in the random matrix limit. In fact, as the error plot in Fig.~\ref{fig:error} suggests and we will more carefully argue in Sect.~\ref{sect:CDFexpansion}, this can even be iterated yet another step and we are led to the specific conjecture\footnote{Note that $l\in \N$ is always a discrete variable in this paper, so there is no need for taking integer parts here.}
\begin{equation}\label{eq:CDFexpansion}
\prob(L_n\leq l) = F_2(t_l) + n^{-1/3} F_{2,1}(t_l) + n^{-2/3} F_{2,2}(t_l) + O(n^{-1}), \quad t_l := \frac{l-2\sqrt{n}}{n^{1/6}},
\end{equation}
as $n\to\infty$, uniformly in $l\in \N$. Compelling evidence for the existence of the functions $F_{2,1}$ and $F_{2,2}$  is given in the left panels of Figs.~\ref{fig:1st} and \ref{fig:2nd}.\footnote{Based on Monte-Carlo simulations, Forrester and Mays \cite[Eq.~(1.10) and Fig.~7]{arxiv.2205.05257} were recently also led to conjecture an expansion of the form
\[
\prob(L_n\leq l) = F_2(t_l) + n^{-1/3} F_{2,1}(t_l) + \cdots.
\]
However, the substantially larger errors of Monte Carlo simulations as compared to the Stirling-type formula~\eqref{eq:stirling} would inhibit them from getting, in reasonable time, much evidence about the next finite size correction term.}

\begin{figure}[tbp]
\includegraphics[width=0.475\textwidth]{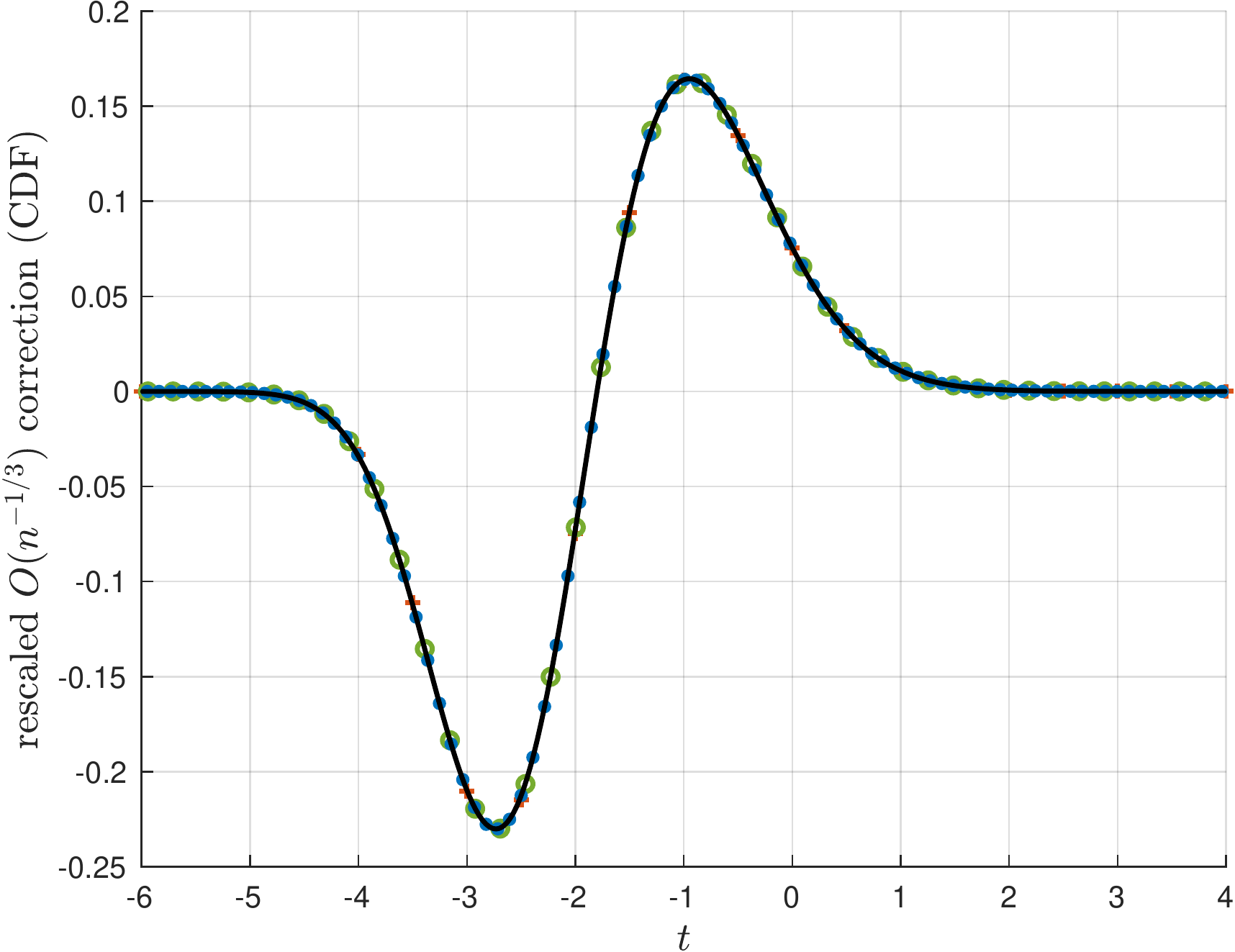}\hfil
\includegraphics[width=0.475\textwidth]{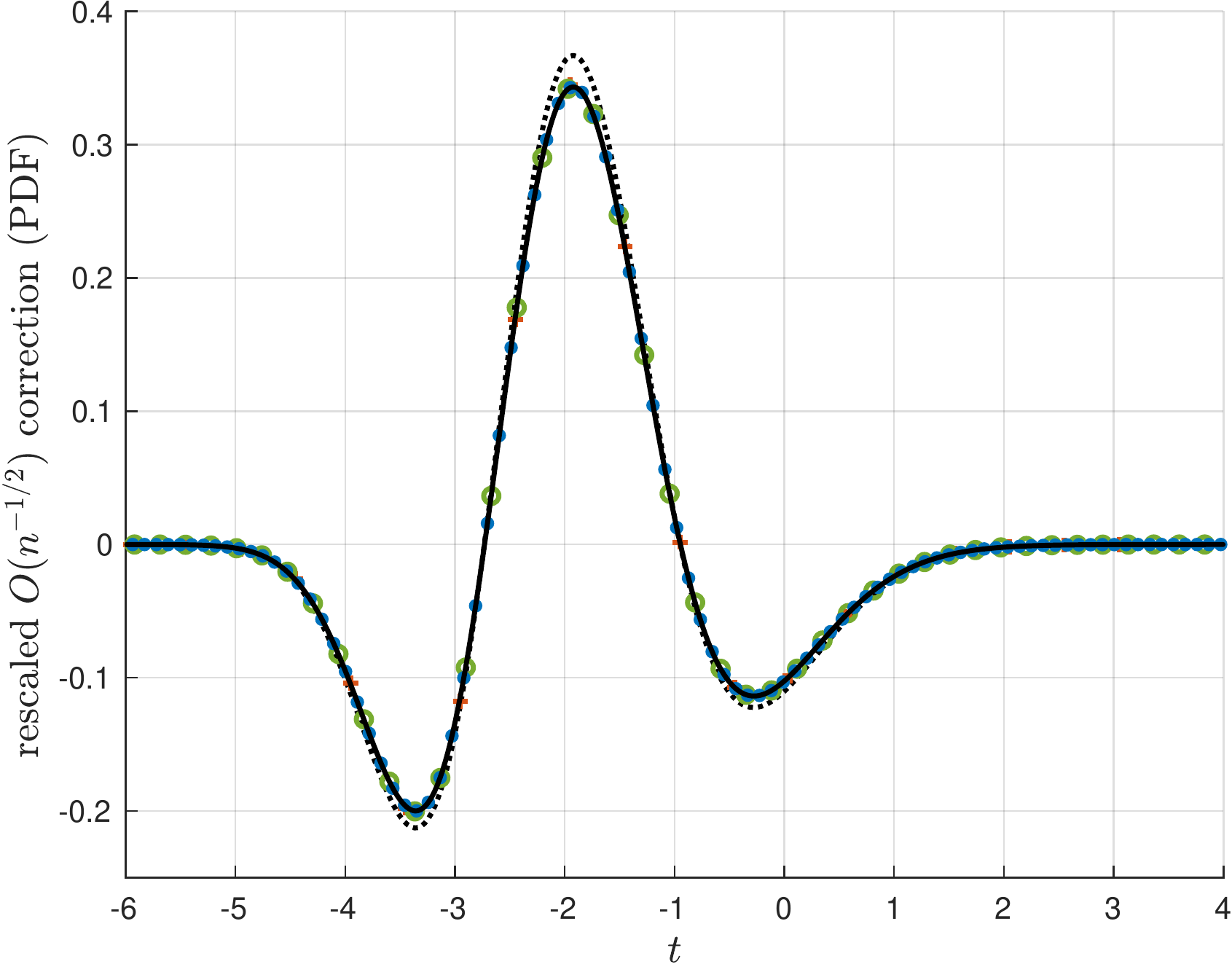}
\caption{{\footnotesize Rescaled differences between the distributions of $L_n$ and their expansions truncated after the leading order term (i.e., the random matrix limit)---see \eqref{eq:CDFexpansion} for the CDF resp.~\eqref{eq:PDFexpansion} for the PDF; data points (to avoid clutter just every $5^\text{th}$ is displayed) have been calculated using the Stirling-type formula~\eqref{eq:stirling} for $n=10^6$ (red $+$), $n=10^8$ (green~$\circ$), $n=10^{10}$ (blue $\bullet$). Left: CDF errors rescaled by $n^{1/3}$, horizontal axis is $t=(l-2\sqrt{n})/n^{1/6}$, cf. \cite[Fig.~7]{arxiv.2205.05257} for a similar figure with data from Monte Carlo simulations for $n=2\cdot 10^4$ and $n=10^5$. The solid line is a polynomial $\tilde F_{2,1}(t)$ of degree~$64$ fitted to the $836$ data points for $n=10^{10}$ with $-8\leq t \leq 10$; it approximates $F_{2,1}(t)$ in that interval. Right: PDF errors rescaled by $n^{1/2}$, horizontal axis is $t=(l-\frac12-2\sqrt{n})/n^{1/6}$. The solid line displays the function $\tilde F_{2,1}'(t)+F_2'''(t)/24$ as an approximation of $F_{2,1}'(t)+F_2'''(t)/24$, with the polynomial $\tilde F_{2,1}(t)$ taken from the left panel. The dotted line shows the term $\tilde F_{2,1}'(t)$ only.}}
\label{fig:1st}
\end{figure}

We note that a corresponding expansion\footnote{Expansions of probability distributions are sometimes called {\em Edgeworth expansions} in reference to the classical one for the central limit theorem. In random matrix theory a variety of such expansions have been studied: e.g., for the soft-edge scaling limits of GUE/LUE \cite{MR2233711} and GOE/GSE \cite{MR2492622}, for the hard-edge scaling limit of LUE \cite{MR3513610} and L$\beta$E \cite{MR4019586}, for the bulk scaling limit of CUE/COE/CSE \cite{MR3647807}. For the the hard-to-soft edge transition limit of LUE see the expansion \eqref{eq:BaikJenkins} and its discussion.}
 for the Poissonization \eqref{eq:poissonization} of the length distribution was studied by Baik and Jenkins \cite[Eq.~(25)]{MR3161478} (using the machinery of Riemann--Hilbert problems up to an error of order $O(r^{-1/2})$) and by Forrester and Mays \cite[Eqs.~(1.18), (2.29)]{arxiv.2205.05257} (using Fredholm determinants), who obtained the expansion, as $r\to \infty$ for bounded $t_l^*$:
\begin{subequations}\label{eq:BaikJenkins}
\begin{equation}\label{eq:BaikJenkinsExpansion} 
\prob(L_{N_r} \leq l)  = F_2(t_l^*) + r^{-1/3} F_{2,1}^*(t_l^*) + O(r^{-2/3}), \quad t_l^* := \frac{l-2\sqrt{r}}{r^{1/6}},
\end{equation}
with the explicit functional form (identified by means of Painlevé representations)
\begin{equation}
F_{2,1}^*(t) = -\frac{1}{10} \left( F_2''(t) + \frac{t^2}{6} F_2'(t)\right).
\end{equation}
\end{subequations}
Though \eqref{eq:BaikJenkinsExpansion} adds to the plausibility of the expansion \eqref{eq:CDFexpansion}, the de-Poissonization lemma of Johansson \cite[Lemma~2.5]{MR1618351} and its commonly used variants (see \cite{MR1682248,MR3468920,MR3468738}) would not even allow us to deduce from \eqref{eq:BaikJenkins} the existence of the term $F_{2,1}(t)$, let alone to obtain its functional form.

{\footnotesize\begin{note}[added in proof]  On the other hand, by inserting the Poissonized expansion \eqref{eq:BaikJenkins} (and the induced expansions of the quantities $b(r)$ and $r_{l,n}$) into the Stirling-type formula \eqref{eq:StirlingLandauO} with its conjectured error of order $O(n^{-2/3})$,  we are led to the conjecture
\begin{equation}\label{eq:F21}
F_{2,1}(t) =  F_{2,1}^*(t) - \frac{1}{2} F_2''(t) = -\frac{1}{10} \left( 6 F_2''(t) + \frac{t^2}{6} F_2'(t)\right).
\end{equation}
This functional form is in perfect agreement with the data displayed in Fig.~\ref{fig:1st}; see Footnote~\ref{foot:F21accuracy} and Remark~\ref{rem:F21mean} for further numerical evidence. Details will be given in a forthcoming paper of the author \cite{forth}, where the expression \eqref{eq:F21} for $F_{2,1}(t)$ (as well as one for $F_{2,2}(t)$) is also obtained by a complex-analytic modification (related to $H$-admissibility) of the de-Poissonization process.
\end{note}}

We will argue in Sect.~\ref{sect:mean} that the expansion \eqref{eq:CDFexpansion} of the length distribution allows us to derive an expansion of the expected value of $L_n$, specifically
\begin{gather}
\Erw(L_n) = 2\sqrt{n} + \mu_0 n^{1/6} + \frac{1}{2} + \mu_1 n^{-1/6} + \mu_2 n^{-1/2} + O(n^{-5/6}),\label{eq:mean}\\*[0.5mm]
\mu_0 = \int_{-\infty}^\infty t \, F_2'(t)\,dt = -1.77108\,68074\cdots,\notag\\*[0.5mm]
\mu_1 = \int_{-\infty}^\infty t \, F_{2,1}'(t)\,dt = 0.06583\,238\cdots, \quad
\mu_2 = \int_{-\infty}^\infty t \, F_{2,2}'(t)\,dt = 0.26122\,27\cdots.\notag
\end{gather}
Similarly, we will derive in Sect.~\ref{sect:var} an expansion of the variance of $L_n$ of the form
\begin{align}
\Var(L_n) &= \nu_0 n^{1/3} + \nu_1 + \nu_2 n^{-1/3} + O(n^{-2/3}),\label{eq:var0}\\*[0.5mm]
\nu_0 &= \int_{-\infty}^\infty t^2 F_2'(t)\,dt - \mu_0^2 = 0.81319\,47928\cdots,\notag\\*[0.5mm]
\nu_1 &= \int_{-\infty}^\infty t^2 F_{2,1}'(t)\,dt +\frac{1}{12} -2\mu_0\mu_1 = -1.20720\,507\cdots,\notag\\*[0.5mm]
\nu_2 &= \int_{-\infty}^\infty t^2 F_{2,2}'(t)\,dt -\mu_1^2 - 2\mu_0\mu_2 = 0.56715\,6\cdots.\notag
\end{align}
The values  of  $\mu_0$ and $\nu_0$ are the known values of mean and variance of the Tracy--Widom distri\-bu\-tion~$F_2$, cf. \cite[Table~10]{MR2895091}. (The leading parts of \eqref{eq:mean} up to $\mu_0n^{1/6}$  and of~\eqref{eq:var0} up to~$\nu_0n^{1/3}$ had been established previously by Baik, Deift and Johansson \cite[Thm.~1.2]{MR1682248}.)

\section{$H$-Admissibility of the Generating Function and its Implications}\label{sect:hayman}

\subsection{$H$-admissible functions}
For simplicity we restrict ourselves to entire functions. We refrain from displaying the rather lengthy technical definition of $H$-admissibility,\footnote{Since we consider entire functions only, $H$-admissibility is here understood to hold in all of $\C$.} which is difficult to be verified in practice and therefore seldomly directly used. Instead, we start by collecting some usefuls facts and criteria from Hayman's original paper \cite{Hayman56}:\footnote{Interestingly, the powerful criterion in part III (which is \cite[Thm.~XI]{Hayman56}) is missing from the otherwise excellent expositions \cite{MR2483235,MR1373678,Wong89} of $H$-admissibility.}

\begin{theorem}[Hayman 1956]\label{thm:hayman} Let $f(z) = \sum_{n=0}^\infty a_nz^n$ and $g(z)$
be entire functions and let $p(z)$ denote a polynomial with real coefficients. 

\medskip

{\rm I.} If $f$ is $H$-admissible, then:
\begin{itemize}\itemsep5pt
\item[a.] $f(r) > 0$ for all sufficiently large $r>0$, so that in particular the auxiliary functions\footnote{In terms of differential operators we have $r \frac{d}{dr} = \frac{d}{d\log r}$.}
\begin{equation}\label{eq:aux}
a(r) = r \frac{d}{dr} \log f(r),\qquad b(r) = r \frac{d}{dr} a(r)
\end{equation}
are well defined there;
\item[b.] for $r>0$ as in {\rm I.a} there is $\log f(r)$ strictly convex in $\log r$, $a(r)$ strictly monotonically increasing, and $b(r)>0$ such that $a(r), b(r)\to\infty$ as $r\to\infty$; in particular, for large integers $n$ there is a unique $r_n>0$ that solves $a(r_n)=n$, it is $r_n\to\infty$ as $n\to\infty$;
\item[c.] if the coefficients $a_n$ of $f$ are all positive, then {\rm I.b} holds for all $r>0$;
\item[d.] as $r\to\infty$, uniformly in $n \in \N_0$, 
\begin{equation}\label{eq:CLT}
\frac{a_n r^n}{f(r)} = \frac{1}{\sqrt{2\pi b(r)}}\left(\exp\left(-\frac{(n-a(r))^2}{2b(r)}\right)+ o(1)\right).
\end{equation}
\end{itemize}

\medskip

{\rm II.} If $f$ and $g$ are $H$-admissible, then:
\begin{itemize}\itemsep5pt
\item[e.] $f(z)g(z)$, $e^{f(z)}$ and $f(z) + p(z)$ are $H$-admissible;
\item[f.] if the leading coefficient of $p$ is positive, $f(z)p(z)$ and $p(f(z))$ are $H$-admissible;
\item[g.] if the Taylor coefficients of $e^{p(z)}$ are eventually positive, $e^{p(z)}$ is $H$-admissible.  
\end{itemize}

\medskip

{\rm III.} If $f$ has genus zero\footnote{By definition, an entire function $f$
has genus zero if it is a polynomial or if it can be represented as a convergent infinite product of the form
\[
f(z) = c z^m \prod_{n=1}^\infty \Big(1- \frac{z}{z_n}\Big)\qquad (z \in \C),
\]
where $c\in \C$ is a constant, $m\in\N_0$ is the order of the zero at $z=0$, and $z_1,z_2,\ldots$ is the sequence of the non-zero zeros, where each one is listed as often as multiplicity requires. 
}
with, for some $\delta >0$, at most finitely many zeros in the sector $|\arg z|\leq \frac{\pi}{2}+\delta$ and
satisfies {\rm I.a} such that $b(r)\to\infty$ as $r\to\infty$, then $f$ is $H$-admissible.
\end{theorem}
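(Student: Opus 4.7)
The plan is to verify each assertion directly from the definition of $H$-admissibility, following Hayman's 1956 approach; the statement is essentially a compilation, so no new ideas are needed. Recall that $H$-admissibility amounts to a ``major/minor arc'' dichotomy on the circle $|z|=r$: for an appropriately chosen $\delta(r)\to 0$, one has on the major arc $|\arg z|\leq \delta(r)$ the quadratic saddle expansion $\log f(re^{i\theta})=\log f(r)+i\theta\,a(r)-\theta^2 b(r)/2 + o(1)$, while on the complementary minor arc $f(re^{i\theta})=o(f(r)/\sqrt{b(r)})$. Everything in Part~I and Part~II flows from feeding this dichotomy into Cauchy's coefficient integral.

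For Part~I, I.a and I.b are essentially tautological consequences of the definition: strict convexity of $\log f(r)$ in $\log r$ is the statement $b(r)>0$, and monotonicity of $a(r)$ is its integrated form. Part~I.c uses the identity $b(r) = \sum_n (n-a(r))^2 a_n r^n/f(r)$, which is manifestly positive for every $r>0$ whenever $f$ has at least two positive Taylor coefficients. For the Stirling-type statement I.d, I would insert the major/minor arc dichotomy into
\[
a_n r^n = \frac{1}{2\pi}\int_{-\pi}^{\pi} f(re^{i\theta})\, e^{-in\theta}\, d\theta,
\]
so that, after completing the square in the exponent, the major arc produces the Gaussian $f(r)\exp(-(n-a(r))^2/(2b(r)))/\sqrt{2\pi\, b(r)}$ uniformly in $n$, while the minor arc is absorbed into the $o(1)$. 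For Part~II, closure under the algebraic operations in II.e--g reduces to checking that the major/minor arc estimates survive each operation: for $fg$ one simply adds the expansions via $a_{fg}=a_f+a_g$, $b_{fg}=b_f+b_g$; for $e^f$ the saddle is amplified by exponentiation; for polynomial modifications the corrections are of lower order and do not disturb the dominant saddle, with the positivity hypotheses ensuring that $f(r)$ genuinely attains its maximum on the positive real axis.

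The main obstacle is Part~III, the sufficient criterion, because there the required minor-arc decay must be \emph{deduced} rather than assumed. Starting from the genus-zero factorisation $f(z) = cz^m\prod_n(1-z/z_n)$, the plan is to estimate
\[
\frac{|f(re^{i\theta})|}{f(r)} = \prod_n \frac{|1 - re^{i\theta}/z_n|}{|1 - r/z_n|}
\]
factor by factor, using the angle between $re^{i\theta}$ and $z_n$. The sector hypothesis places $|\arg z_n|>\pi/2+\delta$ for all but finitely many zeros, forcing the bulk of the product to be controlled by zeros bounded away from the positive axis, while the finitely many exceptional factors contribute at most polynomially. Coupling this geometric input with the prescribed growth $b(r)\to\infty$ through a Phragm\'en--Lindel\"of argument in the sector delivers the required decay $o(1/\sqrt{b(r)})$ on the minor arc, thereby verifying the defining dichotomy. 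This quantitative pairing of the zero distribution with the growth of $b(r)$ is the technically delicate heart of the argument.
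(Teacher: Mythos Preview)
The paper does not prove this theorem at all: it is stated as a compilation of results quoted from Hayman's original 1956 paper, with explicit pointers (e.g., Part~III is Hayman's Thm.~XI) and no argument given. So there is no ``paper's own proof'' to compare your sketch against.

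That said, your outline is a reasonable summary of how Hayman's arguments actually run, and your identification of Part~III as the only place where real work is required is accurate. One caution on Part~III: the minor-arc estimate in Hayman's Thm.~XI is obtained not via Phragm\'en--Lindel\"of but by a direct, rather intricate manipulation of the product $\prod_n |1 - re^{i\theta}/z_n|/|1 - r/z_n|$, exploiting that $\Re(1/z_n)\leq 0$ for the zeros in the far sector; getting the quantitative $o(1/\sqrt{b(r)})$ decay from the hypothesis $b(r)\to\infty$ alone is genuinely delicate and your sketch understates this. If you were to write out a full proof you would need to supply that estimate carefully, or simply cite Hayman as the paper does.
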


Obviously, the Stirling-type formula \eqref{eq:hayman} is obtained from the approximation result \eqref{eq:CLT} by just inserting the particular choice $r=r_n$. 

\begin{remark}
We observe that, if $a_n\geq 0$, Eq.~\eqref{eq:CLT} has an interesting probabilistic content:\footnote{Note that the particular case $f(z)=e^z$ (which is $H$-admissible by Thm.~\ref{thm:hayman}.II.g) specifies to the well-known normal approximation of the Poisson distribution for large intensities---which is a simple consequence of the central limit theorem if we observe that the sum of $k$ independent Poisson random variates of intensity $\rho$ is one of intensity $r = \rho k$.} as a distribution in the discrete variable $n\in\N_0$, the Boltzmann\footnote{We follow the terminology in the theory of Boltzmann samplers \cite{MR2095975}, a framework for the random generation of combinatorial structures. Note that mean and variance of the Boltzmann propabilities $a_n r^n/f(r)$ are exactly the auxiliary functions  $a(r)$ and $b(r)$ as defined in \eqref{eq:aux}, cf. \cite[Prop.~2.1]{MR2095975}.} probabilities $a_n r^n/f(r)$ associated with an $H$-admissible entire function $f$ are, for large intensities $r>0$, approximately normal with mean $a(r)$ and variance $b(r)$; see the right panel of Fig.~\ref{fig:CLT} for an illustrative example using the generating function $f_5(z)$. The additional freedom that is provided in the normal approximation \eqref{eq:CLT} by the uniformity w.r.t. $n$ will be put to good use in Sect.~\ref{sect:regev}.
\end{remark}

\begin{figure}[tbp]
\includegraphics[width=0.5\textwidth]{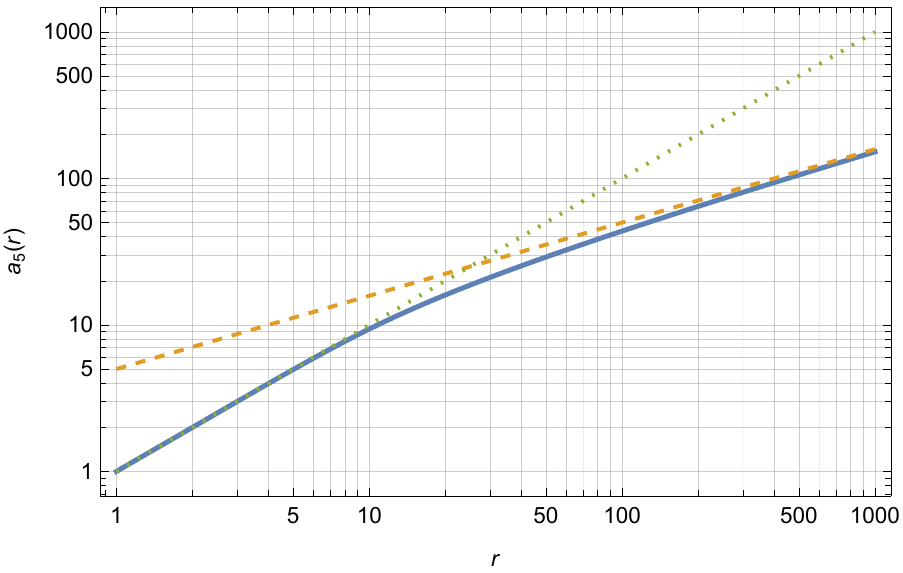}\hfil
\includegraphics[width=0.4125\textwidth]{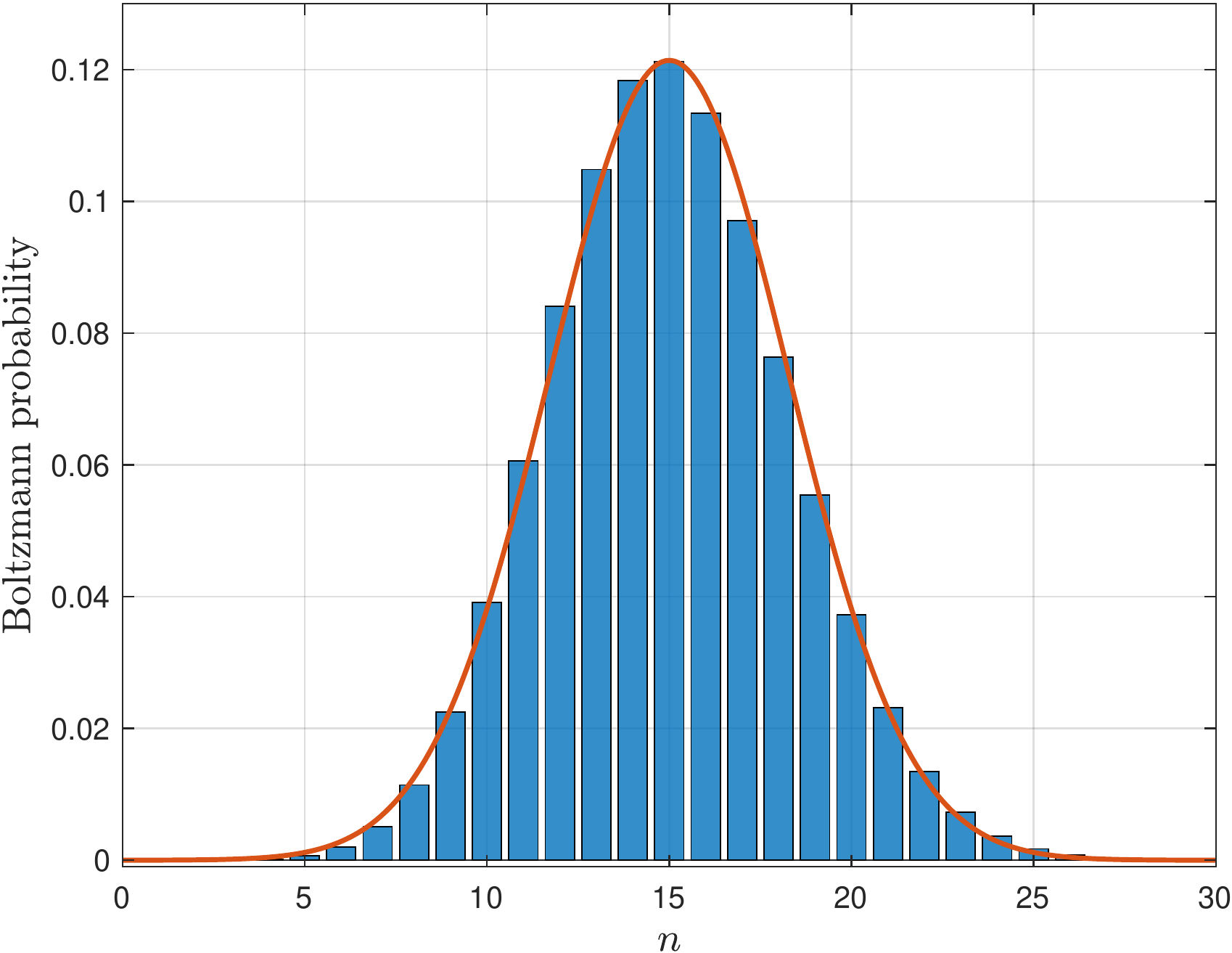}
\caption{{\footnotesize Left: the auxiliary function $a_5(r)$ (blue solid line) associated with the generating function $f_5(z)$, together with the asymptotics $a_5(r)= r + O(r^6)$ as $r\to 0$ (green dotted line) and $a_5(r)= 5 r^{1/2} + O(1)$ as $r\to \infty$ (red dashed line). Right: Illustration of the approximation \eqref{eq:CLT} of the Boltzmann probabilities (blue bars) associated with the generating function $f_5(z)$ for intensity $r_{5,15}\approx 18.23$, cf. the notation in \eqref{eq:stirling}. The normal distribution (red solid line) has mean $a_5(r_{5,15}) = 15$ (cf. the left panel) and variance $b_5(r_{5,15})\approx 10.80$.}}
\label{fig:CLT}
\end{figure}

The classification of entire functions (by quantities such as genus, order, type, etc.) and their distribution of zeros is deeply related to the analysis of their essential singularity at $z=\infty$. For the purposes of this paper, the following simple criterion is actually all we need. The proof uses some theory of entire functions, which can be found, e.g., in \cite{MR589888}.

\begin{lemma}\label{lem:born} Let $f(z)$ be an entire function of exponential type with positive Maclaurin coefficients. If there are constants $c, \tau, \nu > 0$ such that there holds, for the principal branch of the power function and for each $0 < \delta\leq \frac\pi2$,
the asymptotic expansion\footnote{See Remark~\ref{rem:bigo} for the uniformity implied by the notation $O(z^{-1})$ as $z\to\infty$ while $|\!\arg z|\leq \tfrac{\pi}{2}-\delta$.}
\begin{equation}\label{eq:Dexpansion}
D(z):=f(z^2) = cz^{-\nu} e^{\tau z}(1 + O(z^{-1}))\qquad (z\to\infty,\; |\!\arg z|\leq \tfrac{\pi}{2}-\delta),
\end{equation}
then $f$ is $H$-admissible. For $r\to\infty$ the associated auxiliary functions $a(r)$ and $b(r)$ satisfy
\begin{equation}\label{eq:auxexpansion}
a(r) = \frac{\tau}{2}r^{1/2} - \frac{\nu}{2} + O(r^{-1/2}), \qquad
b(r) = \frac{\tau}{4}r^{1/2}  + O(r^{-1/2}),
\end{equation}
and the solution $r_n$ of $a(r_n)=n$ satisfies
\begin{equation}\label{eq:rnexpansion}
r_n =\frac{4n}{\tau^{2}} (n+\nu) + O(1) \qquad (n\to\infty).
\end{equation}
\end{lemma}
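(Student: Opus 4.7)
The plan is to verify the three hypotheses of Hayman's criterion~III (Theorem~\ref{thm:hayman}.III) for $f$, and to read off the asymptotics of $a(r)$, $b(r)$, and $r_n$ from the same local analysis of $\log D$ used to verify the hypotheses.

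\textbf{Order, type, and genus.} Since $f$ has positive Maclaurin coefficients, $M(r,f)=f(r)$ for $r>0$. Specializing \eqref{eq:Dexpansion} to $z=\sqrt{r}$ yields $f(r)\sim c\,r^{-\nu/2}e^{\tau\sqrt{r}}$ as $r\to\infty$ along the positive real axis, so $\log M(r,f)\sim \tau\,r^{1/2}$. Hence $f$ has order $1/2$ and finite type, and by Hadamard's theorem has genus zero. In particular $f(r)>0$ for all sufficiently large $r$, which is condition~I.a.

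\textbf{Zero-free sector.} For fixed $\delta>0$, \eqref{eq:Dexpansion} implies $|D(z)|\geq \tfrac{c}{2}|z|^{-\nu}e^{\tau\,\Re z}$ once $|z|$ is large enough with $|\arg z|\leq \tfrac{\pi}{2}-\delta$, so $D$ has at most finitely many zeros in that sector. Since $f(w)=D(\sqrt{w})$ and the substitution $w=z^2$ maps $|\arg z|\leq \tfrac{\pi}{2}-\delta$ onto $|\arg w|\leq \pi-2\delta$, choosing $\delta<\pi/4$ gives: $f$ has at most finitely many zeros in $|\arg w|\leq \tfrac{\pi}{2}+\delta'$ for some $\delta'>0$.

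\textbf{Auxiliary functions.} Taking logs in \eqref{eq:Dexpansion} and writing the analytic remainder as $h(z)=1+O(z^{-1})$ with $\log h(z)=O(z^{-1})$, Cauchy's integral formula applied on a slightly shrunken subsector gives $(\log h)^{(k)}(z)=O(z^{-k-1})$, so
\[
\frac{D'(z)}{D(z)}=\tau-\frac{\nu}{z}+O(z^{-2}),\qquad \bigl(D'/D\bigr)'(z)=\frac{\nu}{z^{2}}+O(z^{-3}).
\]
Substituting $r=s^{2}$ and using $\tfrac{d}{dr}=\tfrac{1}{2s}\tfrac{d}{ds}$, a short computation gives $a(r)=\tfrac{s}{2}D'(s)/D(s)$ and $b(r)=\tfrac{s}{4}D'(s)/D(s)+\tfrac{s^{2}}{4}\bigl(D'/D\bigr)'(s)$. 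Inserting the expansions above yields \eqref{eq:auxexpansion}, where the two contributions $\pm\nu/4$ to $b(r)$ cancel exactly; in particular $b(r)\to\infty$, which is the final hypothesis of criterion~III and establishes the $H$-admissibility of $f$. Inverting $a(r_n)=n$ gives $r_n^{1/2}=(2n+\nu)/\tau+O(n^{-1})$ (using $r_n\sim n^{2}$), and squaring yields \eqref{eq:rnexpansion}.

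\textbf{Main obstacle.} The only delicate point is differentiating the sectorial asymptotic $\log h(z)=O(z^{-1})$ twice while controlling the remainder orders; this is routine via Cauchy's formula on a nested subsector but must be stated explicitly, as we only have growth control on a sector rather than on all of $\C$. Once this is in place, everything else is elementary bookkeeping in the variable $s=\sqrt{r}$.
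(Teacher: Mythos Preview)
Your proof is correct and follows essentially the same route as the paper: verify genus zero and the zero-free sector, then differentiate the logarithm of the sectorial asymptotic to obtain the expansions of $a(r)$ and $b(r)$ and apply Hayman's criterion~III. The only cosmetic differences are that you read off the order directly from $M(r,f)=f(r)$ on the positive axis (using positive coefficients) rather than via the Phragm\'en--Lindel\"of indicator, and you justify term-wise differentiation by Cauchy's formula on a nested subsector where the paper cites Ritt's theorem---these are equivalent.
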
 
\begin{proof} The expansion \eqref{eq:Dexpansion} is equivalent to
\begin{equation}\label{eq:fasympt}
f(z) = cz^{-\nu/2} e^{\tau z^{1/2}}(1 + O(z^{-1/2}))\qquad (z\to\infty,\; |\!\arg z|\leq \pi-2\delta),
\end{equation}
which readily implies:
\begin{itemize}
\item since $\delta>0$ is arbitrary, $f$ has the Phragmén--Lindelöf indicator
\[
\limsup_{r\to\infty} \frac{\log|f(r e^{i\theta})|}{r^{1/2}} = \tau\cos(\theta/2) \qquad (-\pi< \theta < \pi),
\]
so that $f$ has order $\tfrac12$ and type $\tau$, hence genus zero; 
\item for sufficiently large $R = R_{\delta}>0$, there are {\em no} zeros $z$ of $f$ with
\[
|z|\geq R,\qquad |\!\arg z| \leq \pi -2\delta.
\]
\end{itemize}
Since the Maclaurin coefficients of $f$ are positive we have $f(r)>0$ for $r>0$ and the auxiliary functions $a(r)$, $b(r)$ in \eqref{eq:aux} are well-defined for $r>0$. In fact, both functions can be analytically continued into the domain of uniformity of the expansion \eqref{eq:fasympt} and by differentiating this expansion (which is, because of analyticity, legitimate by a theorem of Ritt, cf. \cite{MR0435697}) we obtain \eqref{eq:auxexpansion}; this implies, in particular, $b(r) \to \infty$ as $r\to\infty$.
Thus, all the assumptions of Thm.~\ref{thm:hayman}.III are satisfied and $f$ is shown to be $H$-admissible.
\end{proof}

\subsection{Singularity analysis of the generating function at \boldmath$z=\infty$\unboldmath}

Establishing an expansion of the form \eqref{eq:Dexpansion} for $D_l(z)=f_l(z^2)$ as given by \eqref{eq:Gessel}, that is to say, for
the Toeplitz determinant
\begin{equation}\label{eq:toeplitz}
D_l(z) =\det_{j,k=1}^l I_{j-k}(2z),
\end{equation}
suggests to start with the expansions (valid for all $0<\delta\leq \frac\pi2$, see \cite[p.~251]{MR0435697})
\begin{subequations}\label{eq:Iexpansion}
\begin{align}
I_m(z) &\sim \frac{e^{z}}{(2\pi z)^{1/2}} \sum_{n=0}^\infty (-1)^n \frac{A_n(m)}{z^n}\qquad \qquad (z\to\infty,\; |\!\arg z|\leq \tfrac{\pi}{2}-\delta),\\*[2mm]
A_n(m) &= \frac{(4m^2-1^2)(4m^2-3^2)\cdots(4m^2-(2n-1)^2)}{n! \,8^n}.
\end{align}
\end{subequations}
This does not yield \eqref{eq:Dexpansion} at once, as there could be, however unlikely it would be, eventually a catastrophic cancellation of {\em all} of the expansion terms when being inserted into the determinant expression defining $D_l(z)$. For the specific cases $l=1,2,\ldots,8$
 a computer algebra system shows that exactly the first $l-1$ terms of the expansion \eqref{eq:Iexpansion} mutually cancel each other in forming the determinant, and we get by this approach\footnote{Odlyzko \cite[Ex.~10.9]{MR1373678} reports that he and Wilf had used this approach, before 1995, for small $l$ in the framework of the method of “subtraction of singularities” in asymptotic enumeration. He states the expansions for $D_4(z)$ and $D_5(z)$, cf. \cite[Eqs.~(10.30)/(10.39)]{MR1373678}, with a misrepresented constant factor in $D_5(z)$, though. No attempt, however, was made back then to guess the general form.} the expansions
\begin{align*}
D_1(z) &= \frac{e^{2 z}}{2 \pi^{1/2} z^{1/2}}(1+O(z^{-1})),
& D_2(z) &=  \frac{e^{4 z}}{8 \pi  z^2}(1+O(z^{-1})),\\*[1mm]
D_3(z) &=   \frac{e^{6z}}{32 \pi ^{3/2} z^{9/2}}(1+O(z^{-1})),
& D_4(z) &=   \frac{3 e^{8 z}}{256 \pi^2 z^8}(1+O(z^{-1})),\\*[1mm]
D_5(z) &=   \frac{9 e^{10 z}}{1024 \pi ^{5/2}z^{25/2}}(1+O(z^{-1})),
& D_6(z) &=   \frac{135 e^{12 z}}{8192 \pi ^3z^{18}}(1+O(z^{-1})),\\*[1mm]
D_7(z) &=   \frac{6075 e^{14 z}}{65536 \pi ^{7/2}z^{49/2}}(1+O(z^{-1})),
& D_8(z) &= \frac{1913625 e^{16 z}}{1048576 \pi ^4z^{32}}(1+O(z^{-1})).\\*[-3mm]
\end{align*}
All of them, inherited from \eqref{eq:Iexpansion}, are valid as $z\to\infty$ while $|\!\arg z|\leq \tfrac{\pi}{2}-\delta$ with the uniformity content implied by the symbol $O(z^{-1})$. From these instances, in view of \eqref{eq:Iexpansion}
and the multi\-linearity of the determinant, we guess that
\[
D_l(z) = c_l \frac{e^{2lz}}{(4\pi z)^{l/2} (2z)^{l(l-1)/2}} (1+ O(z^{-1})) \qquad \qquad (z\to\infty,\; |\!\arg z|\leq \tfrac{\pi}{2}-\delta)
\]
and observe 
\[
c_1, c_2, c_3, c_4, c_5, c_6, c_7, c_8, \ldots = 1, 1, 2, 12, 288, 34560, 24883200, 125411328000,\ldots\;.
\]
Consulting the OEIS\footnote{\url{https://oeis.org/A000178}} (On-Line Encyclopedia of Integer Sequences) suggests the coefficients to be generally of the form 
\[
c_l = 0!\cdot 1!\cdot 2! \, \cdots\,  (l-1)!\,.
\]
Though this is very likely to hold for all $l\in\N$---a fact that would at once yield the $H$-admissiblity of all the generating function $f_l$ by Lemma~\ref{lem:born}---a proof seems to be elusive along these lines, but see Remark~\ref{rem:RHP} for a remedy. 

Inspired by the fact that the one-dimensional Laplace's method easily gives the leading order term in \eqref{eq:Iexpansion} when applied to the Fourier representation 
\begin{equation}\label{eq:besselint}
I_m(2z) = \frac{1}{2\pi} \int_{-\pi}^{\pi}e^{2z\cos \theta} e^{-im \theta}\,d\theta\qquad (z\in \C, m \in \Z),
\end{equation}
we represent the Toeplitz determinant $D_l(z)$ in terms of a multidimensional integral and study
the limit $z\to \infty$ by the multidimensional Laplace method discussed in the Appendix. In fact, \eqref{eq:besselint} shows that the symbol of the Toeplitz determinant $D_l(z)$ is $\exp(2z\cos\theta)$ and a classical formula of Szeg\H{o}'s \cite[p.~493]{Sz1915} from 1915, thus gives, without further calculation, the integral representation\footnote{This induces, see \eqref{eq:forrester}, 
an integral representation of the distribution $E^{\text{(hard)}}_2(0; [0,s],l)$ which has been derived in 1994 by Forrester \cite{MR1271945} using generalized hypergeometric functions defined in terms of Jack polynomials.}
\begin{equation}\label{eq:rains}
D_l(z) = \frac{1}{(2\pi)^l \,l!} \int_{-\pi}^{\pi}\cdots\int_{-\pi}^{\pi} e^{2z \sum_{j=1}^l \cos\theta_j}\cdot \big|\Delta(e^{i\theta_1},\ldots,e^{i\theta_l})\big|^2 \,d\theta_1 \cdots d\theta_l,
\end{equation}
where 
\[
\Delta(w_1,\ldots,w_l) := \prod_{j > k} (w_j- w_k)
\]
denotes the Vandermonde determinant of the complex numbers $w_1,\ldots,w_l$.
\begin{remark} By Weyl's integration formula on the unitary group $U(l)$, cf.~\cite[Eq.~{1.5.89}]{MR2105088}, the integral \eqref{eq:rains} can be written as
\[
D_n(z) = \E_{U\in U(l)} e^{z \tr(U+U^*)},
\]
where the expectation $\E$ is taken with respect to the Haar measure. Without any reference to \eqref{eq:Gessel}, this form was derived in 1998 by Rains \cite[Cor.~4.1]{Rains98} directly
from the identity 
\[
\prob(L_n\leq l) = \E_{U\in U(l)} (|\tr U|^{2n}),
\]
which he had obtained most elegantly from the representation theory of the symmetric group. \end{remark}

We are now able to prove our main theorem.

\begin{theorem}\label{thm:main} For each $0< \delta \leq \pi/2$ and $l\in \N$ there holds the asymptotic expansion
\[
D_l(z) = f_l(z^2) =\frac{0!\cdot 1!\cdot 2! \, \cdots\,  (l-1)!\cdot e^{2l z}}{(2\pi)^{l/2}(2z)^{l^2/2}}(1+ O(z^{-1})) \qquad (z\to\infty,\; |\!\arg z|\leq \tfrac{\pi}{2}-\delta).
\]
Thus, by Lemma~\ref{lem:born}, the generating functions $f_l(z)$ are $H$-admissible and their auxiliary functions satisfy, as $r\to\infty$,
\begin{equation}\label{eq:aux_large_r}
a_l(r) = lr^{1/2} - \tfrac14 l^2 + O(r^{-1/2}),\qquad b_l(r) = \tfrac12 l r^{1/2} + O(r^{-1/2}).
\end{equation}
\end{theorem}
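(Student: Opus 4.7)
The strategy is to apply the multidimensional Laplace method to the Heine--Szegő integral representation \eqref{eq:rains} of $D_l(z)$. For real $z=r>0$, the phase $\phi(\theta)=\sum_{j=1}^l\cos\theta_j$ attains its unique maximum $\phi(0)=l$ on the compact torus $[-\pi,\pi]^l$ at the origin, with Hessian $-I_l$ there, so the asymptotics of $D_l(r)$ are entirely governed by a shrinking neighborhood of $\theta=0$.

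The characteristic twist is that the amplitude $|\Delta(e^{i\theta_1},\ldots,e^{i\theta_l})|^2=\prod_{j<k}4\sin^2\tfrac{\theta_j-\theta_k}{2}$ also vanishes at the critical point, to total order $l(l-1)$. Taylor expansion gives
$$2r\sum_{j=1}^l\cos\theta_j = 2lr - r\sum_{j=1}^l\theta_j^2 + O(r|\theta|^4), \qquad |\Delta(e^{i\theta})|^2 = |\Delta(\theta)|^2\bigl(1+O(|\theta|^2)\bigr),$$
with both correction series involving only polynomials of even total degree in $\theta$. Rescaling $\theta_j=u_j/\sqrt{r}$ produces a Jacobian $r^{-l/2}$ together with a further factor $r^{-l(l-1)/2}$ from the Vandermonde squared (total degree $l(l-1)$), yielding the anticipated prefactor $r^{-l^2/2}$. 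The leading-order integral is the Mehta / $\beta=2$ Selberg integral
$$\int_{\R^l} e^{-\sum_j u_j^2}|\Delta(u)|^2\,du = 2^{-l^2/2}(2\pi)^{l/2}\prod_{j=1}^l j!,$$
and after the simplification $\prod_{j=1}^l j!/l! = 0!\cdot 1!\cdot 2!\cdots(l-1)!$ the claimed leading coefficient emerges, matching the theorem's prefactor exactly.

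For complex $z$ in the sector $|\arg z|\leq\tfrac{\pi}{2}-\delta$, the plan is to use the complex-parameter multidimensional Laplace method (which is precisely the purpose of the Appendix alluded to in the excerpt). The substitution $u=\sqrt{z}\,\theta$ on the principal branch turns the Gaussian part of the exponent into $-\sum u_j^2$; a smooth cutoff isolates a neighborhood of the origin, and outside it the integral is exponentially small in $|z|$ thanks to a bound of the form $\operatorname{Re}(2z\cos\theta) \leq 2l|z|\cos(\arg z) - c|z||\theta|^2$ that holds uniformly for $|\arg z|\leq\tfrac{\pi}{2}-\delta$. Because the higher-order corrections in the phase and in the amplitude are polynomials of even total degree in $\theta$, each contributes an integer power of $1/z$ after rescaling; the first subleading correction is therefore genuinely $O(z^{-1})$, not $O(z^{-1/2})$.

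The $H$-admissibility of $f_l$ and the stated expansions of $a_l(r)$, $b_l(r)$ then follow directly from Lemma~\ref{lem:born} with $\tau=2l$ and $\nu=l^2/2$. The principal obstacle will be the rigorous justification of the complex-parameter Laplace estimate with uniformity in $|\arg z|\leq\tfrac{\pi}{2}-\delta$ as $\delta\to 0$, and the careful bookkeeping of the interplay between the vanishing amplitude $|\Delta(\theta)|^2$ and the higher-order Taylor terms of $\cos\theta_j$ and $\sin^2\tfrac{\theta_j-\theta_k}{2}$ needed to confirm the integer-power structure of the error.
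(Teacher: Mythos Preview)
Your approach is correct and essentially identical to the paper's: both apply the multidimensional Laplace method (Corollary~\ref{cor:Laplace}) to the Szeg\H{o} integral \eqref{eq:rains} at the critical point $\theta=0$, reduce the leading term to the Selberg integral \eqref{eq:Selberg}, and then invoke Lemma~\ref{lem:born} with $\tau=2l$, $\nu=l^2/2$. The only cosmetic differences are that the paper packages the complex-$z$ uniformity directly into Corollary~\ref{cor:Laplace} (so your ``principal obstacle'' is already the content of the Appendix) and reads off the $O(z^{-1})$ error from the general integer-power expansion \eqref{eq:fullasympt} rather than from your parity argument.
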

\begin{proof} We write \eqref{eq:rains} in the form
\[
e^{-lz} D_l(z/2) = \frac{1}{(2\pi)^l \,l!} \int_{-\pi}^{\pi}\cdots\int_{-\pi}^{\pi} e^{-z \sum_{j=1}^l (1-\cos\theta_j)}\cdot \big|\Delta(e^{i\theta_1},\ldots,e^{i\theta_l})\big|^2 \,d\theta_1 \cdots d\theta_l.
\]
The phase function of this multidimensional integrand, that is to say
\[
S(\theta_1,\ldots,\theta_l) :=  \sum_{j=1}^l (1-\cos\theta_j), 
\]
takes it minimum $S(\theta_*)=0$ at $\theta_* = 0$ with the expansion $S(\theta_1,\ldots,\theta_l) = \frac{1}{2}\theta^T\theta + O(|\theta|^4)$ as $\theta \to 0$,
where $|\cdot|$ denotes Euclidian length. Likewise we get for the non-exponential factor\footnote{This factor is zero at $\theta_*=0$, so that Hsu's variant \eqref{eq:Hsu} of Laplace's method, which is the one predominantly found in the literature, does not yield the leading term of $D_l(z)$. That we have to expand the non-exponential factor up to degree $l(l-1)$ for the first non-zero contribution to show up, corresponds to the mutual cancellation of the leading terms of \eqref{eq:Iexpansion} when being inserted into the Toeplitz determinant that defines $D_l(z)$.}
\begin{align*}
\big|\Delta(e^{i\theta_1},\ldots,e^{i\theta_l})\big|^2 &= \prod_{j>k} |e^{i\theta_j}-e^{i\theta_j}|^2  = 
\prod_{j>k} \left|i\theta_j-i\theta_j + O(|\theta|^2)\right|^2 \\*[1mm] 
&= \Delta(\theta_1,\ldots,\theta_l)^2 + O(|\theta|^{l(l-1)+1})\qquad (\theta \to 0),
\end{align*}
where the degree of the homogeneous polynomial $\Delta(\theta_1,\ldots,\theta_l)^2$ is $l(l-1)$. 

Therefore, by the multidimensional Laplace method as given in Corollary~\ref{cor:Laplace} (see also formula \eqref{eq:laplacezero} following it) we obtain immediately
\[
e^{-lz} D_l(z/2) = c_l \frac{(2\pi)^{l/2} z^{-\frac{l+l(l-1)}{2}}}{(2\pi)^l} (1+O(z^{-1}))  \qquad (z\to\infty,\; |\!\arg z|\leq \tfrac{\pi}{2}-\delta)
\]
with 
\begin{equation}\label{eq:Selberg}
c_l := \frac{1}{l!}\frac{1}{(2\pi)^{l/2}} \int_{\R^l} e^{- \theta^T\theta/2}\cdot \big|\Delta(\theta_1,\ldots,\theta_l)\big|^2 \,d\theta 
= 0!\cdot 1!\cdot 2! \, \cdots\,  (l-1)!,
\end{equation}
where the evaluation of this multiple integral is well-known in random matrix theory, e.g., as a consequence of Selberg's integral formula, cf.~\cite[Eq.~(2.5.11)]{MR2760897}.
\end{proof}

\begin{remark}\label{rem:RHP} By \eqref{eq:forrester}, Thm.~\ref{thm:main} implies
\[
E^{\text{(hard)}}_2(0; [0,s],l) = \frac{0!\cdot 1!\cdot 2! \, \cdots\,  (l-1)!}{(2\pi)^{l/2}} \cdot s^{-l^2/4} e^{-s/4 + l s^{1/2}}(1+O(s^{-1/2}))\qquad (s\to\infty),
\]
an asymptotics first rigorously proven, using Riemann--Hilbert problem machinery, by Deift, Krasovsky and Vasilevska \cite{MR2806560} in 2010. Besides that our proof is much simpler, their result, which is for real $s>0$ only, would by itself not suffice to establish the $H$-admissibility of the generating function $f_l(z)$; one would have to complement it with the arguments given above for expanding the Toeplitz determinant \eqref{eq:toeplitz} based on the expansions \eqref{eq:Iexpansion} of the modified Bessel functions. However, their result is more general in another respect: it covers parameters $\alpha \in \C$ of the LUE with $\Re \alpha > -1$ instead of just $l \in \N$; the superfactorial factor $0!\cdot 1!\cdot 2! \, \cdots\,  (l-1)!$ is then to be replaced by $G(1+\alpha)$, where $G(z)$ is the Barnes $G$-function.\footnote{For real $\alpha >-1$, Tracy and Widom~\cite{MR1266485} had conjectured this asymptotics in 1994 based on a guess of the connection formula \eqref{eq:PIIIconnection} for the Painlevé transcendent \eqref{eq:sigmaPIIIinitial} and a numerical exploration of the constant factor. In the same year Forrester~\cite{MR1271945} confirmed this to be true for $\alpha\in\N$ by sketching an argument that, basically, uses the idea underlying the multidimensional Laplace method in the proof of Thm.~\ref{thm:main}. So, Corollary~\ref{cor:Laplace} can be used to spell out the details there, and for the generalization to $\beta$-ensembles sketched in \cite[p.~608]{MR2641363}.} 
\end{remark}

We complement the large $r$ expansion  \eqref{eq:aux_large_r} of the auxiliary functions with their expansions as $r\to 0^+$,
which are simple consequences of elementary combinatorics.

\begin{lemma}\label{lem:aux_small_r} The auxiliary functions of the generating function $f_l$ satisfy, as $r\to 0^+$,
\begin{equation}\label{eq:aux_small_r}
a_l(r) = r - \frac{r^{l+1}}{l! \cdot (l+1)!} + O(r^{l+2}),\qquad b_l(r) = r-\frac{r^{l+1}}{l!\cdot l!} + O(r^{l+2}).
\end{equation}
\end{lemma}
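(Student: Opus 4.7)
The plan is to read off the Maclaurin expansion of $f_l(z)$ through order $z^{l+1}$ from elementary enumeration of short permutations, and then apply the definitions \eqref{eq:aux} of $a_l$ and $b_l$ termwise. Since $L_n(\sigma)\leq n$ for every $\sigma\in S_n$, we have $\prob(L_n\leq l)=1$ for $0\leq n\leq l$. Moreover, among the permutations of $[l+1]$ only the identity achieves $L_{l+1}=l+1$, so $\prob(L_{l+1}\leq l) = 1 - 1/(l+1)!$. Plugging these values into the exponential generating function and completing to the full series for $e^z$ yields
\[
f_l(z) = e^z - \frac{z^{l+1}}{((l+1)!)^2} + O(z^{l+2})\qquad (z\to 0).
\]

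Next I would take logarithms. Writing $f_l(z) = e^z\bigl(1 - z^{l+1}/((l+1)!)^2 + O(z^{l+2})\bigr)$ and using $\log(1+u)=u+O(u^2)$ (with $u=O(z^{l+1})$, so $u^2=O(z^{2l+2})$ is negligible) gives
\[
\log f_l(r) = r - \frac{r^{l+1}}{((l+1)!)^2} + O(r^{l+2}) \qquad (r\to 0^+).
\]
Applying $r\,d/dr$ once and using the identity $(l+1)/((l+1)!)^2 = 1/(l!\,(l+1)!)$ produces the claimed expansion of $a_l(r)$. A second application of $r\,d/dr$, together with $(l+1)/(l!\,(l+1)!) = 1/(l!)^2$, yields the claimed expansion of $b_l(r)$.

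There is no real obstacle: all the work is in getting the combinatorial identification of the first non-trivial coefficient of $f_l$ correct, after which the two expansions drop out of routine differentiation. It is worth noting that the leading behaviour $a_l(r)=r+o(r)$, $b_l(r)=r+o(r)$ mirrors the fact that $f_l(z)$ agrees with $e^z$ up to order $z^l$, so the associated Boltzmann distribution is Poisson-like at small intensity—consistent with the left panel of Fig.~\ref{fig:CLT}, where $a_5(r)\sim r$ as $r\to 0$.
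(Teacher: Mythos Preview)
Your proof is correct and follows essentially the same approach as the paper: both identify $f_l(z)=e^z-z^{l+1}/((l+1)!)^2+O(z^{l+2})$ from the elementary enumeration $\prob(L_{l+1}\leq l)=1-1/(l+1)!$, then obtain the expansions of $a_l$ and $b_l$ by logarithmic differentiation. The only cosmetic difference is that you pass through $\log f_l$ explicitly before applying $r\,d/dr$, whereas the paper writes $zf'_l/f_l$ directly.
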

\begin{proof}
Because of $L_n \leq n$ and since there is just one permutation $\sigma$ with $L_n=n$, we get
\[
\prob(L_n \leq l) = \begin{cases}
1 & \quad n \leq l,\\*[1mm]
1- \frac{1}{(l+1)!}  &\quad n = l+1.
\end{cases}
\] 
This implies, by truncating the power series of $f_l$ at order $l+1$,
\[
f_l(z) = 1 + z + \frac{z^2}{2!} + \cdots + \frac{z^l}{l!} + \frac{1-\frac{1}{(l+1)!}}{l!}z^{l+1} + O(z^{l+2}) 
= e^z - \frac{z^{l+1}}{((l+1)!)^2} + O(z^{l+2}). 
\]
Logarithmic differentiation of the power series thus yields, as $z\to \infty$,
\[
z \frac{f'(z)}{f(z)} = z - \frac{z^{l+1}}{l! (l+1)!} + O(z^{l+2}), \quad z \frac{d}{ dz} \left( z \frac{f'(z)}{f(z)}\right) = 
z - \frac{z^{l+1}}{(l!)^2} + O(z^{l+2}),
\]
and the results follow from specializing to $z = r>0$.
\end{proof}

The left panel of Fig.~\ref{fig:CLT} visualizes the expansions \eqref{eq:aux_large_r} and \eqref{eq:aux_small_r} for $a_5(r)$. Apparently, as a function of $\log r$, the auxiliary $\log a_l(r)$ interpolates monotonically, concavely, and from below between the following two extremal regimes:

\begin{itemize}\itemsep2pt
\item $\log r$ as $r\to 0^+$, which reflects, by the proof of Lemma~\ref{lem:aux_small_r}, the regime $l \geq n$, and\\*[-3mm]
\item $\frac12\log r + \log l$ as $r\to \infty$, which reflects the regime $l \ll n^{1/4}$ (see Sect.~\ref{sect:regev}).
\end{itemize}

\noindent It is this seamless interpolation between the two regimes $l \geq n$ and $l \ll n^{1/4}$ that helps to understand the observed uniformity 
of the Stirling-type formula w.r.t. $l$, cf. \eqref{eq:StirlingLandauO}.

\begin{table}[tbp]
\caption{The Stirling-type formula \eqref{eq:stirling} vs. Regev's formula \eqref{eq:regev} for $l=5$. The exact values of $\#\{\sigma: L_n(\sigma) \leq 5\} = n!\cdot \prob(L_n\leq 5)$ were computed using the holonomic $4$-term recursion w.r.t. $n$ (see \cite[p.~468]{MR1336855}). Note that the Stirling-type formula gives $4$ correct digits already for $n=160$; Regev's formula starts to deliver $4$ correct digits only for $n$ as large as $n\approx 2\cdot 10^5$, where one is just about to enter the regime $l\ll n^{1/4}$. Including the Gaussian factor \eqref{eq:gaussiancorrection} helps to improve the accuracy a little, relaxing the bound on $l$ to about $l\leq 2 n^{1/4}$.}
\label{tab:2}
{\footnotesize
\begin{tabular}{rllll}
\hline\noalign{\smallskip}
$n$ & $\#\{\sigma: L_n(\sigma) \leq 5\}$ & Stirling-type \eqref{eq:stirling} & Regev \eqref{eq:regev}  & $e^{-l^4/16n}\; \times$ \eqref{eq:regev} \\
\noalign{\smallskip}\hline\noalign{\smallskip}
$20$       & \quad $3.472\cdot 10^{17}$        & $3.477\cdot 10^{17}$       &   $2.159\cdot 10^{18}$       &   $3.062\cdot 10^{17}$       \\
$40$       & \quad $1.836\cdot 10^{42}$        & $1.837\cdot 10^{42}$       &   $4.794\cdot 10^{42}$       &   $1.805\cdot 10^{42}$       \\ 
$80$       & \quad $5.915\cdot 10^{94}$        & $5.918\cdot 10^{94}$       &   $9.681\cdot 10^{94}$       &   $5.941\cdot 10^{94}$       \\ 
$160$      & \quad $1.260\cdot 10^{203}$       & $1.260\cdot 10^{203}$      &   $1.617\cdot 10^{203}$      &   $1.267\cdot 10^{203}$      \\ 
$320$      & \quad $1.630\cdot 10^{423}$       & $1.630\cdot 10^{423}$      &   $1.848\cdot 10^{423}$      &   $1.636\cdot 10^{423}$      \\ 
$640$      & \quad $9.287\cdot 10^{866}$       & $9.287\cdot 10^{866}$      &   $9.891\cdot 10^{866}$      &   $9.305\cdot 10^{866}$      \\ 
$1\,280$   & \quad $1.124\cdot 10^{1\,758}$    & $1.124\cdot 10^{1\,758}$   &   $1.160\cdot 10^{1\,758}$   &   $1.125\cdot 10^{1\,758}$   \\ 
$2\,560$   & \quad $6.434\cdot 10^{3\,543}$    & $6.434\cdot 10^{3\,543}$   &   $6.536\cdot 10^{3\,543}$   &   $6.437\cdot 10^{3\,543}$   \\ 
$200\,000$ & \quad $2.383\cdot 10^{279\,530}$  & $2.383\cdot 10^{279\,530}$ &   $2.383\cdot 10^{279\,530}$ &   $2.383\cdot 10^{279\,530}$ \\ 
\noalign{\smallskip}\hline
\end{tabular}}
\end{table}

\subsection{A new proof of Regev's asymptotic formula for fixed \boldmath$l$\unboldmath}\label{sect:regev} 

A simple closed form expression in terms of $n$ and $l$ is obtained by studying the asymptotics, as $n \to \infty$ for fixed $l$, of the Stirling-type formula \eqref{eq:stirling} itself---or even easier yet, because of its
added flexibility, of Hayman's normal approximation \eqref{eq:CLT} for a suitable choice of $r$. As tempting as it might appear, however, this stacking of asymptotics leads, first, to  a considerable loss of approximation power for small $n$, cf. Table~\ref{tab:2}, and, second, to a lack of uniformity w.r.t. $l$ since the result is effectively conditioned to the constraint $l\ll n^{1/4}$.

To avoid notational clutter, we suppress the index $l$ from the generating function $f_l$, its auxiliaries $a_l$, $b_l$ and from the radius $r_{l,n}$. Solving $a(r_n)=n$ yields, by \eqref{eq:rnexpansion}, the expansion
\begin{equation}\label{eq:rstirlingasymp}
r_n = \frac{n^2}{l^2} + \frac{n}{2} + O(1) \qquad (n\to\infty);
\end{equation}
which suggests to plug its leading order term $r^*_n := n^2/l^2$ into \eqref{eq:CLT}. Thm.~\ref{thm:main} gives, as $n\to\infty$,
\begin{gather*}
f(r_n^*) = \frac{0!\cdot 1!\cdot 2! \, \cdots\,  (l-1)!}{(2\pi)^{l/2} 2^{l^2/2}} \cdot e^{2n} \left(\frac{l}n\right)^{l^2/2} (1+ O(n^{-1})),\\*[1mm]
a(r_n^*) = n - \tfrac{1}{4}l^2 + O(n^{-1}),\qquad b(r_n^*) = \tfrac12 n + O(n^{-1}).
\end{gather*}
The Gaussian term in \eqref{eq:CLT} has thus the expansion
\begin{equation}\label{eq:gaussiancorrection}
\exp\left(-\frac{(n-a(r_n^*))^2}{2b(r_n^*)}\right) = e^{-l^4/16n}\big(1+ O(n^{-2})\big) = 1 - \frac{l^4}{16n} + O(n^{-2}),
\end{equation}
which indicates that we can expect $r_n^*$ to deliver a quality of approximation comparable to~$r_n$ (which corresponds to using the Stirling-type formula) only if $l\ll n^{1/4}$; see Table~\ref{tab:2} for an illustrative example. Altogether Hayman's normal approximation \eqref{eq:CLT} gives, choosing $r=r_n^*$,
\[
\#\{\sigma: L_n(\sigma) \leq l\} = n!\cdot \prob(L_n\leq l) =  \frac{0!\cdot 1!\cdot 2! \, \cdots\,  (l-1)!}{(2\pi)^{l/2} 2^{l^2/2}} \cdot \frac{(n!)^2 \left(\frac{e}{n}\right)^{2n} l^{2n + l^2/2}}{\sqrt{\pi}\, n^{(l^2+1)/2}}(1+ o(1))
\] 
as $n\to\infty$. Wrapping up by using Stirling's formula in the form
\[
(n!)^2 \left(\frac{e}{n}\right)^{2n}  = 2\pi n \,(1+O(n^{-1}))
\]
we have thus given a new proof of Regev's formula \cite[Eq.~(4.5.2)]{MR625890}:
\begin{equation}\label{eq:regev}
\#\{\sigma: L_n(\sigma) \leq l\}  = \frac{0!\cdot 1!\cdot 2! \, \cdots\,  (l-1)! \cdot l^{2n + l^2/2}}{(2\pi)^{(l-1)/2}(2n)^{(l^2-1)/2}}(1+o(1))\qquad (n\to\infty).
\end{equation}

\begin{remark} The fixed $l$ asymptotics \eqref{eq:regev} was first proved by Regev~\cite{MR625890} in 1981, cf.~\cite[Thm.~7]{MR2334203}. His
delicate and rather long\footnote{Though Regev
studies, with a real parameter $\beta>0$, the more general combinatorial sums
\[
S_l^{(\beta)}(n):=\sum_{\lambda \,\vdash n\,:\, l_\lambda \leq l} d_\lambda^\beta,
\]
this generality adds only marginally to the complexity of his proof. In its final step he refers to the same instance of Selberg's integral, cf. \cite[Eq.~(2.5.11)]{MR2760897}, that we have used to obtain \eqref{eq:Selberg} in the specific case $\beta=2$.}
 proof proceeds, first, by identifying the leading contributions to the finite sum \eqref{eq:hook} using Stirling's formula, and then, after trading exponentially decaying tails (the basic idea of Laplace's method), by approximating the sum by a multidimensional integral which, finally, leads to the evaluation of Selberg's integral \eqref{eq:Selberg}. 
\end{remark}

\section{Numerical Evaluation of the Generating Function and its Auxiliaries}\label{sect:numerical}

The numerical evaluation of the Stirling-type formula \eqref{eq:stirling} requires the evaluation of the generating function $f_l(r)$ and its auxiliaries $a_l(r)$, $b_l(r)$ for real $r>0$. This will be based on the representation \eqref{eq:forrester}. That is to say, by writing
\begin{subequations}\label{eq:aux_in_rmt}
\begin{equation}
g_l(s) :=E^{\text{(hard)}}_2(0; [0,s],l), \qquad v_l(s) := - s \frac{d}{ds} \log g_l(s), \qquad u_l(s):= s v_l'(s),
\end{equation}
for the functions from random matrix theory, we obtain 
\begin{equation}
f_l(r) = e^r g_l(4r),\qquad a_l(r) = r - v_l(4r),\qquad b_l(r) = r - u_l(4r).
\end{equation}
\end{subequations}
As is common in the discussion of the LUE, we generalize this by replacing $l\in\N$ with a real parameter $\alpha>-1$. Dropping the index altogether we write, briefly, just $g(s)$, $v(s)$, and $u(s)$.

\subsection{Evaluation in terms of \boldmath$\sigma$-Painlevé-III\unboldmath}

The work of Tracy and Widom \cite{MR1266485} shows
\[
g(s) = \exp\left(-\int_0^s v(x)\,\frac{dx}{x}\right)\qquad (s\geq 0),
\]
where $v(s)$ satisfies a Jimbo--Miwa--Okamoto $\sigma$-form of the Painlevé-III equation (related to the Hamiltonian formulation PIII$'$ in the work of Okamoto; cf. \cite[§8.2]{MR2641363}), i.e., the nonlinear second order differential equation
\begin{equation}\label{eq:sigmaPIII}
(x v'')^2 - (\alpha v')^2 + v'(v-x v')(4v'-1) = 0\qquad (x>0),
\end{equation}
subject to the following initial condition, which is consistent with \eqref{eq:aux_small_r} for $\alpha=l\in\N$:
\begin{equation}\label{eq:sigmaPIIIinitial}
v(x) = \frac{1}{\Gamma(\alpha+1)\Gamma(\alpha+2)} \left(\frac{x}4\right)^{\alpha+1} (1+ O(x)) \qquad (x\to 0^+).
\end{equation}
A numerical integration of the initial value problem gives approximations to $v(s)$ and $v'(s)$, thus also to $u(s)= s v'(s)$. As explained in 
\cite{MR2895091} a direct numerical integration has stability issues as the solution $v$ is a separatrix solution of the $\sigma$-Painlevé-III equation. It is therefore advisable to solve the differential equation numerically as an asymptotic boundary problem by supplementing the initial condition by its connection formula, that is the corresponding expansion for $x\to \infty$:
\begin{equation}\label{eq:PIIIconnection}
v(x) = \frac{x}{4} - \frac{\alpha}{2}x^{1/2} + \frac{\alpha^2}{4} + \frac{\alpha}{16}x^{-1/2} + \frac{\alpha^2}{16}x^{-1} + O(x^{-3/2})\qquad (x\to\infty).
\end{equation}
Note the consistency with \eqref{eq:aux_large_r} for $\alpha = l\in\N$; for general $\alpha>-1$ this connection formula was conjectured by Tracy and Widom \cite[Eq.~(3.1)]{MR1266485}, a rigorous proof is given in \cite{MR2806560}, cf. Remark~\ref{rem:RHP}. 

\subsection{Compiling a table of exact rational values}\label{subsect:exact}
As observed recently by Forrester and Mays \cite[Sec. 4.2]{arxiv.2205.05257}, substituting a truncated power series expansion of $v(x)$ into the $\sigma$-Painlevé-III equation~\eqref{eq:sigmaPIII} is a comparatively cost-efficient way\footnote{There are holonomic recurrences satisfied by $n!\cdot \prob(L_n\leq l)$ w.r.t. $n$; cf. the explicit formulae for $l=2,3$ in~\cite[p.~281]{Gessel90}, for $l=4$ in \cite[p.~556]{MR2334203} (the cases $l=3,5$ are misprinted there), for $l=5$ in \cite[p.~468]{MR1336855}; we have used the one for $l=5$ in Table~\ref{tab:2}. 
For $l>5$ the polynomial coefficients  quickly become unwieldy, though. \label{foot:recurrence}} to compile a table of the {\em exact} rational values of the distribution $\prob(L_n \leq l)$; they report to have done so up to $n=700$. 

We note that instead of dealing directly with \eqref{eq:sigmaPIII} in this fashion, it is of advantage to use an equivalent third-order differential equation belonging to the Chazy-I class,\footnote{In fact, this equation is obtained as the particular choice $c_1=c_2=c_4=c_5=c_8=c_9=0$, $c_3=-1$, $c_6=1/4$, $c_7=-l^2/4$, $f(x)=x$ in the full Chazy-I equation of the form discussed in \cite[Eq.~(A3)]{MR1752309}.} namely
\begin{equation}\label{eq:chazy}
   v_l''' +\frac{1}{x}v_l'' -\frac{6}{x}v_l'^2 +\frac{4}{x^2}v_lv_l' +\frac{x-l^2}{x^2} v_l'-\frac{1}{2 x^2}v_l = 0,
\end{equation}
which is obtained from differentiating \eqref{eq:sigmaPIII} w.r.t. $x$ and dividing the result by $2x^2v_l''^2$. Note that the Chazy-I equation \eqref{eq:chazy} is linear in the highest order derivative of $v_l$ and quadratic in the lower orders, whereas the $\sigma$-Painlevé-III equation \eqref{eq:sigmaPIII} is quadratic in the highest order derivative and cubic in the lower orders. Therefore,
substituting the expansion
\begin{subequations}\label{eq:chazyRecursion}
\begin{equation}
v_l(x) = \sum_{n=l+1}^\infty a_n x^n 
\end{equation}
into the Chazy-I equation \eqref{eq:chazy}  yields a much simpler recursive formula for the $a_n$, $n=l+1,\ldots$\,:
\begin{equation}
(n+1)(n^2-l^2)a_{n+1} + (n-\tfrac12)a_n - 2\sum_{m=l+1}^{n-l} m a_m \cdot (3(n-m)+1) a_{n+1-m} = 0,
\end{equation}
uniquely determining the coefficients $a_n$ from the initial value \eqref{eq:sigmaPIIIinitial}, that is, from
\begin{equation}
a_{l+1} = \frac{1}{4^{l+1}l!(l+1)!}.
\end{equation}
\end{subequations}
It is now a simple matter of truncated power series calculations in a modern computer algebra system to expand the generating function itself, 
\[
f_l(r) = \exp\left(r - \int_0^{4r} v_l(x)\frac{dx}{x}\right) = \exp\left( r - \sum_{n=l+1}^K \frac{4^n a_n}{n}r^n + O(r^{K+1})\right).
\]
Avoiding the overhead of reducing fractions and computing common denominators in exact rational arithmetic, we have used significance arithmetic with $\lceil 2.5\log_{10} (1000!) \rceil= 6420$ digits and subsequent rational reconstructions to compile a table\footnote{The table is available for download at \url{https://box-m3.ma.tum.de/f/7c4f8cb22f5d425f8cff/}. The tabulated values were checked, for $l=1,\ldots,5$, against the recurrences cited in Footnote~\ref{foot:recurrence} and, for $n-l=0,\ldots,20$, against an explicit formula by Goulden \cite[Cor.~3.4(a)]{MR1080995}---note the restriction on $l$ for it to hold true:
\[
\prob(L_n=l) =  \sum_{i,j,k \geq 0, i+j+k\leq n-l} \frac{(-1)^{i+j}n!}{i!j!k! (n-i-k)!(n-j-k)!} \qquad (l \geq (n-1)/2).
\]} of $\prob(L_n = l), 1\leq l \leq n$, up to $n = 1000$ (in just about $1.5$ hours CPU time using one core of a 3GHz Xeon server). This table is used in Figs.~\ref{fig:error} and \ref{fig:2nd} as well as Sect.~\ref{sect:mean} (note that Table~\ref{tab:1} could have been compiled with the values for up to $n=36$ that were tabulated in the work of Baer and Brock \cite{MR228216}).

\subsection{Evaluation in terms of Bessel kernel determinants and traces} In \cite{MR2600548} the author has shown that  Nyström's method for integral equations can be generalized to the numerical evaluation of Fredholm determinants. Thus, as advocated in \cite{MR2895091}, there is a stable and efficient numerical method to directly address the representation
\begin{subequations}
\begin{equation}
g(s) = \det(I- K)|_{L^2(0,s)},
\end{equation}
derived by Forrester \cite{MR1236195} in 1993, in terms of the Bessel kernel
\begin{equation}\label{eq:besselkernel}
K(x,y) := \frac{\smash[b]{J_\alpha(\sqrt{x}) \sqrt{y} J_{\alpha-1}(\sqrt{y}) - \sqrt{x} J_{\alpha-1}(\sqrt{x}) J_\alpha(\sqrt{y})} }{2(x-y)}.
\end{equation}
This numerical method was extended in \cite[Appendix]{MR3647807} to the evaluation of general terms involving determinants, traces, and resolvents of integral operators. The evaluation of the auxiliary functions $v(s)$, $u(s)$, as defined in \eqref{eq:aux_in_rmt}, is thus facilitated by the following theorem.
 
\begin{theorem}\label{thm:uv} Let $K(x,y)$ be a smooth kernel that induces an integral operator $K$ on $L^2(0,s)$ for all $s>0$ and define the derived kernel as
\[
K'(x,y) := K(x,y) + xK_x(x,y) + yK_y(x,y).
\]
Then, if we assume $g(s) = \det(I- K)|_{L^2(0,s)}>0$ for all $s>0$, there holds
\begin{align*}
v(s) &= - s \frac{d}{ds} \log g(s) = \tr \big ((I-K)^{-1} K' \big)|_{L^2(0,s)},\\*[2mm]
u(s) &= s v'(s)  = \tr \big( (I-K)^{-1} K'' + ((I-K)^{-1} K')^2\big)|_{L^2(0,s)}.
\end{align*}
\end{theorem}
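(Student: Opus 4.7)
The plan is to move the $s$-dependence out of the domain and into the kernel by a dilation, so that the classical formula for the derivative of a Fredholm determinant of a smoothly varying trace-class operator applies directly. Concretely, let $U_s \colon L^2(0,1) \to L^2(0,s)$ be the unitary operator defined by $(U_s\phi)(x) = s^{-1/2}\phi(x/s)$, so that $\tilde K := U_s^* K\, U_s$ is the integral operator on $L^2(0,1)$ with kernel $\tilde K(\xi,\eta) = s\,K(s\xi,s\eta)$. Because Fredholm determinants and traces are invariant under unitary conjugation, $g(s) = \det(I-\tilde K)|_{L^2(0,1)}$ and any trace identity for $\tilde K$ translates immediately into the corresponding identity for $K$ on $L^2(0,s)$. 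The key computational observation, verified by the chain rule, is that for any smooth kernel $H(x,y)$ the operator $s\partial_s$ acts precisely as the \emph{prime} operation:
\[
s\,\partial_s \tilde H(\xi,\eta) = s\,H(s\xi,s\eta) + s^2\xi\, H_x(s\xi,s\eta) + s^2\eta\, H_y(s\xi,s\eta) = \widetilde{H'}(\xi,\eta).
\]

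Since $K$ is smooth, $s \mapsto \tilde K$ is smooth as a map into the trace class, so the standard identity for the logarithmic derivative of a Fredholm determinant gives
\[
-s\,\frac{d}{ds}\log g(s) = \tr\bigl((I-\tilde K)^{-1}\, s\partial_s\tilde K\bigr) = \tr\bigl((I-\tilde K)^{-1}\, \widetilde{K'}\bigr) = \tr_{L^2(0,s)}\bigl((I-K)^{-1}K'\bigr),
\]
where in the last step we undo the conjugation using $(I-\tilde K)^{-1} = U_s^*(I-K)^{-1}U_s$ and cyclicity of the trace. This establishes the first claim.

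For the second formula, apply $s\,d/ds$ once more to $v(s) = \tr\bigl((I-\tilde K)^{-1}\, s\partial_s\tilde K\bigr)$. Using the resolvent derivative identity $\partial_s(I-\tilde K)^{-1} = (I-\tilde K)^{-1}(\partial_s\tilde K)(I-\tilde K)^{-1}$ together with the Leibniz rule and the relation $(s\partial_s)^2 = s\partial_s + s^2\partial_s^2$, one obtains
\[
u(s) = s\,v'(s) = \tr\bigl(((I-\tilde K)^{-1}\, s\partial_s\tilde K)^2\bigr) + \tr\bigl((I-\tilde K)^{-1}\, (s\partial_s)^2\tilde K\bigr).
\]
Applying the key observation once gives $s\partial_s\tilde K = \widetilde{K'}$, and iterating it gives $(s\partial_s)^2\tilde K = s\partial_s\widetilde{K'} = \widetilde{(K')'} = \widetilde{K''}$. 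Undoing the unitary conjugation on each summand yields the announced identity
\[
u(s) = \tr_{L^2(0,s)}\bigl((I-K)^{-1}K'' + ((I-K)^{-1}K')^2\bigr).
\]

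The only technical point that needs attention is to verify that $s \mapsto \tilde K$ is of class $C^2$ into the trace class on $L^2(0,1)$, and that $s$-differentiation commutes with the Fredholm trace; both are straightforward consequences of the smoothness of $K$ on compact rectangles and the uniformly convergent Fredholm series for the resolvent, so this is routine rather than a real obstacle. In effect, the entire proof reduces to the dilation bookkeeping of the previous two paragraphs.
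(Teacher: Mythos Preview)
Your proof is correct and follows essentially the same route as the paper: both arguments pull back the $s$-dependence to a fixed space $L^2(0,1)$ via the dilation kernel $sK(sx,sy)$, observe that $s\partial_s$ on the rescaled kernel corresponds exactly to the prime operation, and then apply the standard logarithmic-derivative and resolvent-derivative identities. Your presentation is slightly more explicit about the unitary nature of the conjugation and about iterating the identity $s\partial_s\widetilde{H}=\widetilde{H'}$, but the substance is the same.
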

\begin{proof} Rescaling integrals w.r.t. the measure $d\mu(y)=K(x,y)\,dy$ from being taken over the interval $(0,s)$ to $(0,1)$ induces a transformation of the kernel $K(x,y)$ according to
\[
K_s(x,y) = s K(sx,sy).
\]
This way we can keep the space $L^2(0,1)$ fixed while the kernels become dependent on the parameter $s>0$; in particular, then, there is no need to distinguish in notation between kernels and their induced integral operators. Now, if we denote differentiation w.r.t. to the parameter $s$ by a dot, we get 
\[
\dot{K}_s(x,y) = K'(sx,sy) = s^{-1} K_s'(x,y),
\]
and thus, by \cite[Lemma~1]{MR3647807}, the logarithmic derivative
\[
g'(s)/g(s) = -  \tr \big ((I-K_s)^{-1} \dot K_s \big) |_{L^2(0,1)}  = - s^{-1} \tr \big ((I-K)^{-1} K'  \big)|_{L^2(0,s)},
\]
which proves the asserted formula for $v(s) = -s g'(s)/g(s)$. Since, cf. \cite[Eq.~(2.4)]{MR1266485},
\[
\frac{d}{ds} (I-K_s)^{-1} = (I-K_s)^{-1} \dot K_s (I-K_s)^{-1},
\]
we get by the linearity of the trace
\[
u(s) = s v'(s) = \tr \big( (I-K_s)^{-1} K_s'' + ((I-K_s)^{-1} K_s')^2\big)|_{L^2(0,1)},
\]
which finishes the proof after a back-transformation to $L^2(0,s)$.
\end{proof}

For the Bessel kernel \eqref{eq:besselkernel} at hand we get the derived kernels
\begin{align*}
K'(x,y) &= \frac{1}{4}J_\alpha(\sqrt{x})J_\alpha(\sqrt{y}),\\*[1mm]
K''(x,y) &= (1-\alpha) K'(x,y) + \frac{1}{8}\Big(J_\alpha(\sqrt{x}) \sqrt{y} J_{\alpha-1}(\sqrt{y}) + \sqrt{x} J_{\alpha-1}(\sqrt{x}) J_\alpha(\sqrt{y})\Big).
\end{align*}
We observe that both, $K'$ and $K''$, induce integral operators of finite rank, namely
\begin{gather*}
K' = \phi\otimes \phi,\quad K'' = (1-\alpha)\, \phi \otimes \phi + \frac{1}{2} (\phi \otimes \psi + \psi \otimes \phi),
\intertext{where we have put}
\phi(x):= \frac{1}{2} J_\alpha(\sqrt{x}), \quad\psi(x):=  \frac{1}{2} \sqrt{x} J_{\alpha-1}(\sqrt{x}).
\end{gather*}
Hence the results of Theorem~\ref{thm:uv} simplify considerably: first, we obtain\footnote{Correcting an obvious typo, \eqref{eq:TWv} is precisely \cite[Eq.~(6)]{MR3513610}. As it was noted there, \eqref{eq:TWv} can also be found, though not explicitly, in \cite{MR1266485}. On the other hand, formula~\eqref{eq:Born_u} seems to be new.}
\begin{equation}\label{eq:TWv}
v(s) = \tr \big ((I-K)^{-1} \phi \otimes \phi \big)|_{L^2(0,s)} = \langle (I-K)^{-1}\phi, \phi\rangle_{L^2(0,s)};
\end{equation}
next, by observing
\[
K' (I-K)^{-1} K' = \langle (I-K)^{-1}\phi, \phi\rangle_{L^2(0,s)} \cdot \phi \otimes \phi = v(s)\cdot K',
\]
we get, because of symmetry and linearity,
\begin{equation}\label{eq:Born_u}
u(s) =  (1-\alpha)v(s) + \langle (I-K)^{-1}\phi, \psi\rangle_{L^2(0,s)} + v(s)^2.
\end{equation}
\end{subequations}
Both formulae for the auxiliary functions $u$ and $v$ can now be easily implemented in the author's Matlab toolbox for Fredholm determinants (which provides also commands to evaluate traces and inner products of general operator terms including resolvents; cf. \cite{MR2895091,MR2600548} and \cite[Appendix]{MR3647807}). Since all the numerical evaluations come with an estimate of the (absolute) error there, the implied approximation errors in computing the generating function $f_l(r)$ and its auxiliaries $a_l(r)$ and~$b_l(r)$ can straightforwardly be assessed. 

\begin{remark}\label{rem:F2prime}
A result similar to Theorem~\ref{thm:uv} holds for smooth integral kernels $K(x,y)$, with sufficient decay at $\infty$, which induce integral operators $K$ on $L^2(s,\infty)$ for all $s \in \R$. Here we define
the derived kernel as
\[
K'(x,y):= K_x(x,y) + K_y(x,y)
\]
and get, if $g(s) =\det(I-K)|_{L^2(s,\infty)}>0$ for all $s\in\R$, the logarithmic derivative 
\[
\frac{d}{ds} \log g(s) = -\tr \big ((I-K)^{-1} K'\big) |_{L^2(s,\infty)}.
\]
The proof goes by considering $K_s(x,y)=K(s+x,s+y)$ and transforming $(s,\infty)$ to $(0,\infty)$ by a shift. As an example, the Tracy--Widom distribution $F_2(s)$ used in \eqref{eq:BDJ99} is known to be given in terms of the Airy kernel determinant \cite{MR1236195},
\begin{subequations}
\begin{equation}
F_2(s) = \det(I-K)|_{L^2(s,\infty)}, \qquad K(x,y) = \frac{\Ai(x)\Ai'(y)-\Ai'(x)\Ai(y)}{x-y}.
\end{equation}
Here we have $K'(x,y) = -\Ai(x)\Ai(y)$, i.e., $K'=-\Ai\otimes \Ai$, and thus
\begin{equation}\label{eq:F2prime}
F_2'(s) = -F_2(s)\cdot \tr \big ((I-K)^{-1} K' \big)|_{L^2(s,\infty)}
= F_2(s)\cdot  \langle (I-K)^{-1}\Ai,\Ai\,\rangle_{L^2(s,\infty)}.
\end{equation}
The last formula was used for the calculations shown in Table~\ref{tab:3}. In the same manner we get
\begin{align}
F_2''(s) &= 2 F_2(s) \cdot \langle (I-K)^{-1} \Ai, \Ai'\,\rangle_{L^2(s,\infty)},\label{eq:F2doubleprime}\\[1mm]
F_2'''(s) &= 2 F_2(s) \cdot \left(\langle (I-K)^{-1} \Ai', \Ai'\,\rangle_{L^2(s,\infty)} + \langle (I-K)^{-1} \Ai, \Ai''\,\rangle_{L^2(s,\infty)}\right),
\end{align}
\end{subequations}
as well as similar formulae for higher order derivatives of $F_2$.\footnote{Though the inner products in \eqref{eq:F2prime} and \eqref{eq:F2doubleprime} appear in the work of Tracy and Widom \cite[Eq.~(1.3)]{MR1266485}, the formula for $F_2''(s)$ is not given there.}
\end{remark}

\subsection{Implementation details} First, by uniqueness, solving $a_l(r)=n$ for $r=r_{l,n}$ 
can easily be accomplished by an iterative solver. In view of the left panel in Fig.~\ref{fig:CLT} and the expansion \eqref{eq:rstirlingasymp} we take as initial guess 
\[
r_0 := \max(n, (n/l)^2 + n/2).
\] 
Second, the numerical evaluation of the Stirling-type formula \eqref{eq:stirling} for larger values of $n$ requires to avoid severe overflow of intermediate terms. Based on the representations in \eqref{eq:aux_in_rmt}, and by rearranging terms, we can write \eqref{eq:stirling}  equivalently as follows:\footnote{We have to stabilize the numerical evaluation of the expression $h- \log(1+h)$ for small $h:=v_l/n\approx 0$. This is done, first, by using {\tt h-log1p(h)} and, second, by switching to a suitable Taylor expansion for very small $h$.} 
\begin{equation}\label{eq:stirlingright}
\prob(L_n \leq l) = \tau_n \cdot g_l \cdot \frac{ \exp\left(n \left(\dfrac{v_l}{n}-\log\left( 1+ \dfrac{v_l}{n}\right)\right)\right)}{\sqrt{1 + \dfrac{v_l-u_l}{n}}}(1+ o(1)) \qquad (n\to\infty),
\end{equation}
where $g_l$, $v_l$ and $u_l$ are evaluated  at $s=4r_{l,n}$ and there is
\[
\tau_n := \frac{n!}{\sqrt{2\pi n}} \left(\frac{e}{n}\right)^n = 1+\frac{n^{-1}}{12}+\frac{n^{-2}}{288}-\frac{139n^{-3}}{51840}-\frac{571n^{-4}}{2488320}+\frac{163879n^{-5}}{209018880}+O(n^{-6}).
\]
In IEEE hardware arithmetic we take the definition of $\tau_n$ until $n=100$ and only switch to the shown Stirling expansion  for larger $n$---thus seamlessly providing full accuracy.\footnote{In fact, nothing of substance would change if we just replaced $\tau_n$ by $1$ since the thus committed error would be in the same ballpark as the one of the Stirling-type formula \eqref{eq:stirling} itself. We did not bother to do so, though.} This allows us to approximate the PDF $\prob(L_n = l)$ near its mode for up to $n = 10^{12}$ and larger. For accurate tails, such as in Table~\ref{tab:2}, we have to resort to higher precision arithmetic, though.

\section{First and Second Finite Size Corrections to the Random Matrix Limit}\label{sect:finite}

\subsection{The CDF of the distribution of \boldmath$L_n$\unboldmath}\label{sect:CDFexpansion}

Based on data from Monte-Carlo simulations, Forrester and Mays \cite{arxiv.2205.05257} have recently initiated the study of finite size corrections to the random matrix limit \eqref{eq:BDJ99}, which is
\begin{equation}\label{eq:tl}
\prob( L_n \leq l) = F_2(t_l) + o(1),\qquad t_l:=\frac{l-2\sqrt{n}}{n^{1/6}},
\end{equation}
as $n\to\infty$, uniformly in $l\in \N$. We will refine their study by using the much more accurate and efficient Stirling-type formula \eqref{eq:stirling} instead. Looking at the error 
\[
\delta_0(n) := \max_{l\in\{1,\ldots,n\}}\big|\prob( L_n \leq l) - F_2(t_l)\big|
\]
for $n$ up to $1000$ (see the red crosses in the left panel of Fig.~\ref{fig:error} in a double logarithmic scaling) suggest that $\delta_0(n) \approx c_1 n^{-1/3} + c_2 n^{-2/3}$ and yields the conjecture
\begin{equation}\label{eq:conj1}
\prob( L_n \leq l) = F_2(t_l)  + n^{-1/3} F_{2,1}(t_l) + O(n^{-2/3})
\end{equation}
for some function $F_{2,1}(t)$. Numerically, the conjecture has been convincingly checked against the data obtained by the Stirling-type formula \eqref{eq:stirling} for $n=10^6$, $n=10^8$, and $n=10^{10}$; see the left panel of Fig.~\ref{fig:1st}. We have fitted a polynomial $\tilde F_{2,1}(t)$ of degree 64 to the $836$ data points obtained for $n=10^{10}$ in the interval $-8 \leq t \leq 10$, thus approximating the putative  function $F_{2,1}(t)$ there. 

\begin{remark}\label{rem:F21error} The error of approximating $F_{2,1}(t)$ by this procedure can be estimated as follows.
Extrapolating the errors displayed in the left panel of Fig.~\ref{fig:error} shows that the Stirling-type formula induces a perturbation of size $\approx (0.031 n^{-2/3} + 0.058 n^{-1}) n^{1/3}|_{n=10^{10}} = 1.4\cdot 10^{-5}$. On the other hand, the finite size effect of the next order term $n^{-2/3} F_{2,2}(t)$, displayed in the left panel of Fig.~\ref{fig:2nd}, induces a perturbation of size $\approx 0.25 n^{-1/3}|_{n=10^{10}} = 1.2\cdot 10^{-4}$. Thus, altogether $\tilde F_{2,1}$ approximates $F_{2,1}$ up to an error\footnote{(added in proof) In fact, the conjectured analytic form \eqref{eq:F21} of $F_{2,1}(t)$  gives $\|\tilde F_{2,1} - F_{2,1}\|_\infty \approx 1.1814 \cdot 10^{-4}$.\label{foot:F21accuracy}} of the order $10^{-4}$.
\end{remark}
\begin{figure}[tbp]
\includegraphics[width=0.475\textwidth]{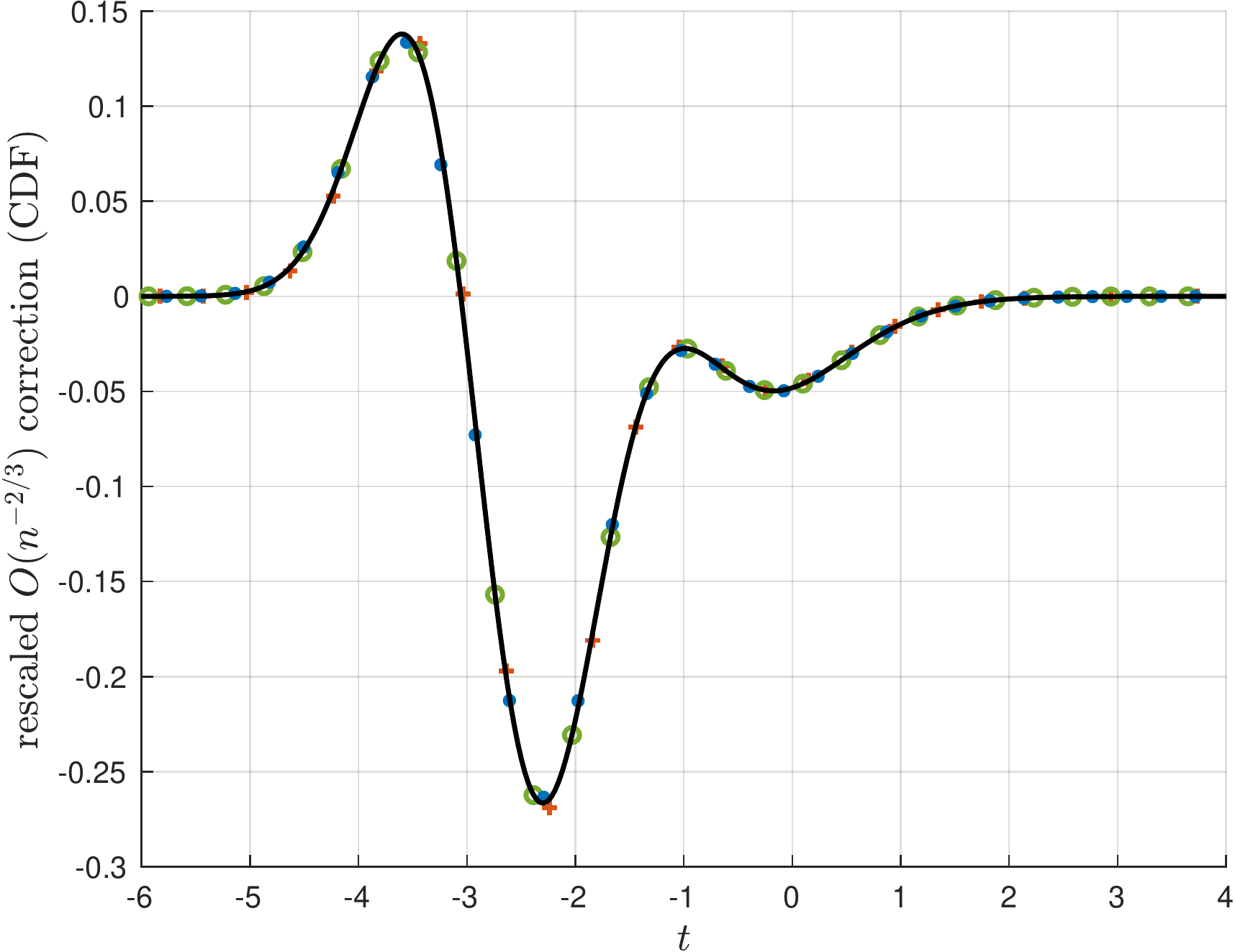}\hfil
\includegraphics[width=0.475\textwidth]{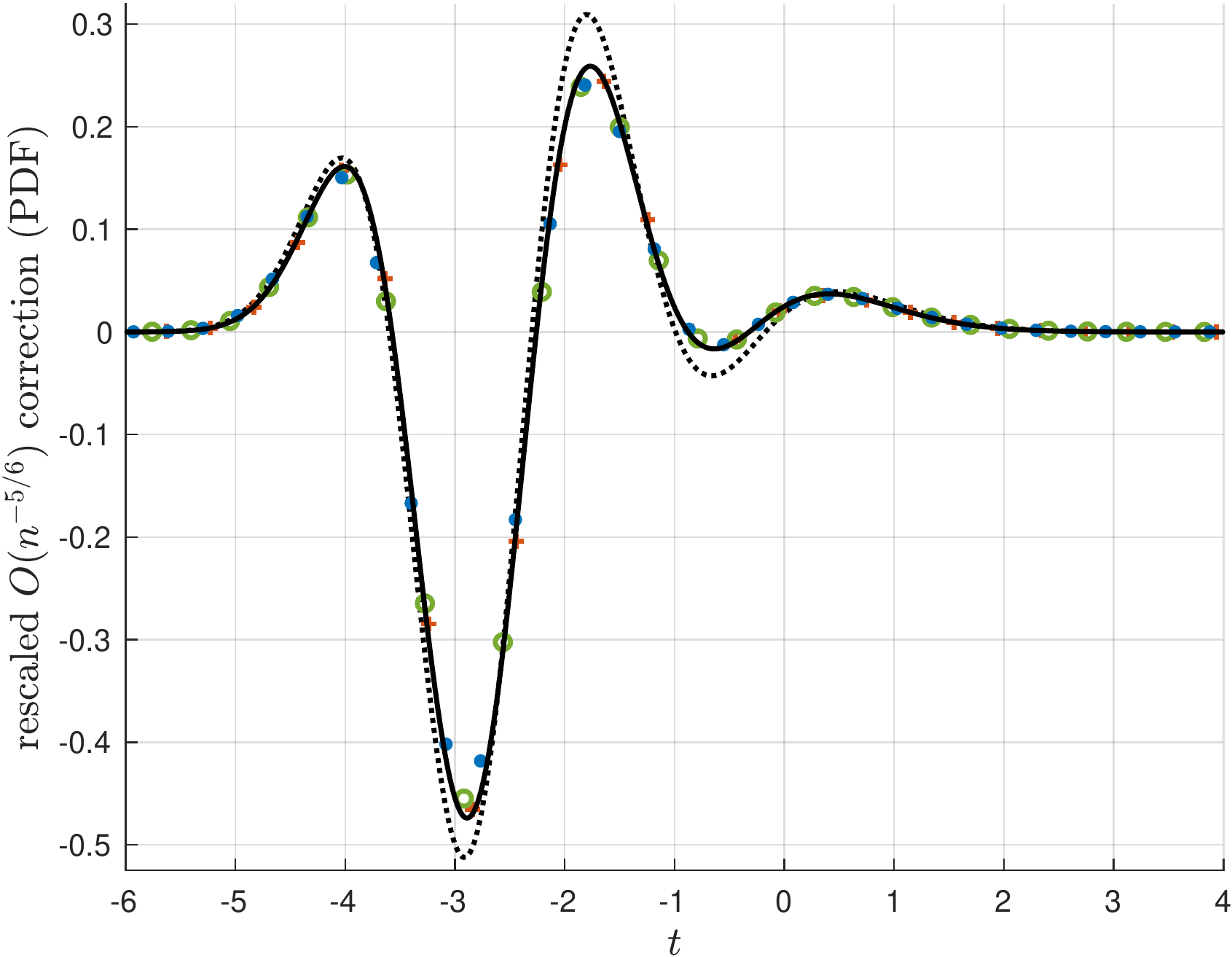}
\caption{{\footnotesize Rescaled differences between the distributions of $L_n$ and their expansions truncated after the first finite size correction term---see \eqref{eq:CDFexpansion} for the CDF resp. \eqref{eq:PDFexpansion} for the PDF; the data points have been calculated with the polynomial $\tilde F_{2,1}(t)$ from Fig.~\ref{fig:1st} for the exact values of the distribution for $n=250$ (red $+$), $n=500$ (green $\circ$), $n=1000$ (blue~$\bullet$). Left: CDF errors rescaled by $n^{2/3}$, horizontal axis is $t=(l-2\sqrt{n})/n^{1/6}$. The solid line is a polynomial $\tilde F_{2,2}(t)$ of degree $48$ fitted to all the $152$ data points
with $-7.5 \leq t \leq 9.5$; it approximates $F_{2,2}(t)$ in that interval. Right: PDF errors rescaled by $n^{5/6}$, horizontal axis is $t=(l-\frac12-2\sqrt{n})/n^{1/6}$. The solid line displays $\tilde F_{2,2}'(t)+\tilde F_{2,1}'''(t)/24 + F_2^{(5)}(t)/1920$ as an approximation of $F_{2,2}'(t)+ F_{2,1}'''(t)/24 + F_2^{(5)}(t)/1920$, with the polynomial $\tilde F_{2,2}(t)$ taken from the left panel and $\tilde F_{2,1}(t)$ as in Fig.~\ref{fig:1st}. The dotted line displays the term $\tilde F_{2,2}'(t)$ only.}}
\label{fig:2nd}
\end{figure}
If we iterate this approach yet another step, by looking at the error
\[
\delta_1(n) := \max_{l\in\{1,\ldots,n\}}\big|\prob( L_n \leq l) - F_2(t_l) - n^{-1/3} \tilde F_{2,1}(t_l)\big|
\]
for $n$ up to $1000$, then a double logarithmic plot (the green circles in the left panel of Fig.~\ref{fig:error}) suggests that $\delta_1(n) \approx c_1 n^{-2/3} + c_2 n^{-1}$. As stated in the introduction, this yields the refinement~\eqref{eq:CDFexpansion} of conjecture \eqref{eq:conj1}, namely that there is further a function $F_{2,2}(t)$ such that
\begin{equation}\label{eq:CDFexpansion2}
\prob(L_n\leq l) = F_2(t_l) + n^{-1/3} F_{2,1}(t_l) + n^{-2/3} F_{2,2}(t_l) + O(n^{-1}) \qquad (n\to\infty),
\end{equation}
uniformly in $l\in\N$.

\begin{remark}\label{rem:F22error}
To validate conjecture \eqref{eq:CDFexpansion2} against numerical data, obtained by replacing $F_{2,1}(t)$ by the approximation $\tilde F_{2,1}(t)$, we have to be careful with an effective choice of $n$, though. On the one hand, as a perturbation of $F_{2,2}(t)$ the error of about $10^{-4}$ in $\tilde F_{2,1}$, as estimated in Remark~\ref{rem:F21error}, would get amplified by $n^{1/3}$. On the other hand, an extrapolation of the errors displayed in the left panel of Fig.~\ref{fig:error} shows that the Stirling-type formula would induce an additional perturbation of size $\approx (0.031 n^{-2/3} + 0.058 n^{-1}) n^{2/3}$. The sweet spot of both perturbations combined is at $n\approx 1.4\cdot 10^4$ with a minimum error of about $3.6\cdot 10^{-2}$.
Thus, we better stay with the tabulated exact values of the distribution of $L_n$ up to $n=1000$, which restricts the size of the perturbation to just less than the order of $n^{1/3} 10^{-4}|_{n=1000} = 10^{-3}$.
\end{remark}

Thus, staying with the tabulated values of the distribution of $L_n$ for $n=250$, $n=500$, and $n=1000$ we get a convincing picture; see the left panel of  Fig.~\ref{fig:2nd}. We have fitted a polynomial $\tilde F_{2,2}(t)$ of degree $48$ to all of the $152$ data points in the interval $-7.5 \leq t \leq 9.5$, approximating the putative function $F_{2,2}(t)$ there. 

\subsection{The PDF of the distribution of \boldmath$L_n$\unboldmath}\label{sect:PDFexpansion}

If we apply the central differencing formula, for smooth functions $F(x)$ and increments $h\to 0$, that is to say
\[
F(x+h)-F(x) = h F'(x+h/2) +\frac{h^3}{24} F'''(x+h/2) + \frac{h^5}{1920} F^{(5)} (x+ h/2) + O(h^7),
\]
with the increment $h=n^{-1/6}$ to the conjectured expansion \eqref{eq:CDFexpansion2}, now written in the form
\begin{multline*}
\prob(L_n=l) = \prob(L_n\leq l)  - \prob(L_n\leq l-1)\\*[2mm] = \big(F_2(t_l)-F_2(t_{l-1}) \big) + n^{-1/3} \big(F_{2,1}(t_l)-F_{2,1}(t_{l-1}) \big) + n^{-2/3} \big(F_{2,2}(t_l) - F_{2,2}(t_{l-1}) \big) + \cdots,
\end{multline*}
we get, assuming some uniformity, an induced expansion of the PDF:
\begin{subequations}
\begin{multline}\label{eq:PDFexpansion}
\prob(L_n= l) = n^{-1/6} F_2'(\hat t_l) + n^{-1/2} \Big(F_{2,1}'(\hat t_l)+\frac{1}{24}F_2'''(\hat t_l)\Big)\\*[2mm]
 + n^{-5/6} \Big(F_{2,2}'(\hat t_l) + \frac{1}{24}F_{2,1}'''(\hat t_l)+\frac{1}{1920}F_2^{(5)}(\hat t_l)\Big) + O(n^{-7/6})\qquad (n\to\infty)
\end{multline}
uniformly in $l\in\N$, where we have briefly written 
\begin{equation}\label{eq:hattl}
\hat t_l := \frac{l-\frac{1}{2}-2\sqrt{n}}{n^{1/6}}.
\end{equation}
\end{subequations}
Note the shift by $1/2$ in the numerator of $\hat t_l$ as compared to $t_l$ (defined in \eqref{eq:tl}). There is compelling numerical evidence for the expansion \eqref {eq:PDFexpansion}; see the right panels of Figs.~\ref{fig:1st} and \ref{fig:2nd}.

\subsection{The expected value of \boldmath$L_n$\unboldmath}\label{sect:mean}

By a shift and rescale, the expected value of the discrete random variable $L_n$ can be written in the form 
\[
\Erw(L_n) = \sum_{l=1}^n l \cdot \prob(L_n=l) 
= 2\sqrt{n} + \frac{1}{2} + n^{1/6} \sum_{l=1}^n \hat t_l \cdot \prob(L_n=l).
\]
If we write \eqref{eq:PDFexpansion}, with obvious definitions of the functions $G_j(t)$, in the form
\begin{equation}\label{eq:probG}
\prob(L_n= l) = n^{-1/6} G_0(\hat t_l) + n^{-1/2} G_1(\hat t_l) + n^{-5/6} G_2(\hat t_l) + O(n^{-7/6})
\end{equation}
we get the induced expansion
\begin{align*}
\Erw(L_n) &= 2\sqrt{n} + n^{1/6} \mu_0^{(n)} + \frac{1}{2} + n^{-1/6} \mu_1^{(n)} + n^{-1/2} \mu_2^{(n)} + \cdots \\*[1mm]
\mu_j^{(n)} &:= n^{-1/6} \sum_{l=1}^n \hat t_l \, G_j(\hat t_l).
\end{align*}
Now, if we assume (a) that  the decay $G_j(t) \to 0$ (and likewise for all the derivatives) is exponentially fast as $t\to\pm\infty$ (see Figs.~\ref{fig:1st} and \ref{fig:2nd}) and (b) that  the $G_j$ can be extended analytically to a strip containing the real axis, we obtain
\begin{equation}\label{eq:trap}
\mu_j^{(n)} \doteq n^{-1/6} \sum_{l=-\infty}^\infty \hat t_l \, G_j(\hat t_l) \doteq \int_{-\infty}^\infty t\, G_j(t)\,dt =:\mu_j,
\end{equation}
where “$\doteq$” denotes equality up to terms that are exponentially small for large $n$. Here, in the first step
the series was obtained by adding, under assumption (a), the exponentially small tail, and in the next step  we have identified the series as the trapezoidal rule with step-size $h = n^{-1/6}$---a quadrature rule known to converge, under assumption~(b), exponentially fast to the integral, cf. \cite{MR3245858}.

\begin{table}[tbp]
\caption{Exponentially fast convergence of first ($k=1$) and second ($k=2$) moments,\\*[2mm]
\hspace*{2.45cm}$\mu_{0,k}^{(n)} := n^{-1/6} \sum_{l=1}^n \hat t_l^k F_2'(\hat t_l) \;\to\; 
\mu_{0,k}^{(\infty)} := \int_{-\infty}^\infty t^k F_2'(t)\,dt.$\\*[-4mm]
}
\label{tab:3}
{\footnotesize
\begin{tabular}{cccc}
\hline\noalign{\smallskip}
$n$ & $\mu^{(n)}_{0,1}$ & $\mu^{(n)}_{0,2}$ &\;\;\,$\mu^{(n)}_{0,2} - \big(\mu^{(n)}_{0,1}\big)^2$ \\
\noalign{\smallskip}\hline\noalign{\smallskip}
                         $6$        & $-1.73195\,96234$ & $3.76769\,77551$ & $0.76801\,36177$ \\      
                        $12$        & $-1.77034\,42726$ & $3.94627\,23262$ & $0.81215\,34824$ \\         
                        $24$        & $-1.77108\,66107$ & $3.94994\,20793$ & $0.81319\,42965$ \\        
                        $48$        & $-1.77108\,68074$ & $3.94994\,32722$ & $0.81319\,47928$ \\*[-1mm]
                        $\vdots$    & $\vdots$ & $\vdots$ & $\vdots$                          \\
                        $\infty$    & $-1.77108\,68074$ & $3.94994\,32722$ & $0.81319\,47928$ \\    
\noalign{\smallskip}\hline
\end{tabular}}
\end{table}%

\begin{remark} For  the function $G_0(t) = F_2'(t)$, i.e., the density of the Tracy--Widom distribution, the asserted exponentially fast convergence \eqref{eq:trap} can be checked against numerical data which were obtained by applying the highly accurate numerical methods described in \cite{MR2895091} to the representation \eqref{eq:F2prime} of $F_2'(t)$; see Table~\ref{tab:3}.
\end{remark}

We have thus derived from \eqref{eq:CDFexpansion2}---based on the assumptions of uniformity, exponential decay, and analytic continuation of the functions $G_j$ and their derivatives---the following expansion which adds three more terms to the expansion given in \cite[Thm.~1.2]{MR1682248}:
\begin{equation}\label{eq:mean2}
\Erw(L_n) = 2\sqrt{n} +  \mu_0 n^{1/6} + \frac{1}{2} + \mu_1n^{-1/6}  + \mu_2n^{-1/2}  + O(n^{-5/6});
\end{equation}
where we have, with the numerical value of $\mu_0$ taken from \cite[Table~10]{MR2895091}, cf. also Table~\ref{tab:3}, 
\begin{align*}
 \mu_0 &= \int_{-\infty}^\infty t \, F_2'(t)\,dt = -1.77108\,68074\,\cdots,\\*[1.75mm] 
 \mu_1 &= \int_{-\infty}^\infty t\Big(F_{2,1}'(t)+\frac{1}{24}F_2'''(t)\Big)\,dt = \int_{-\infty}^\infty t \, F_{2,1}'(t) \,dt,\\*[1.75mm]
 \mu_2&= \int_{-\infty}^\infty t\Big(F_{2,2}'(t) + \frac{1}{24}F_{2,1}'''(t)+\frac{1}{1920}F_2^{(5)}(t)\Big)\,dt = \int_{-\infty}^\infty t \, F_{2,2}'(t) \,dt.
 \end{align*}
We note that the higher derivatives do not contribute to the integral values, as can be shown using integration by parts and the assumed exponential decay to zero. Based on the polynomial approximations displayed in Figs.~\ref{fig:1st} and \ref{fig:2nd} we get the  numerical estimates---comparing, in addition to $n=10^{10}$, with the analogous results for $n=10^9$ and $n=10^{11}$:
\begin{equation}\label{eq:integrals}
\mu_1 \approx \int_{-8}^{10} t \tilde F'_{2,1}(t)\,dt \approx 0.0659, \qquad \mu_2 \approx \int_{-7.5}^{9.5} t \tilde F'_{2,2}(t)\,dt \approx 0.25.
\end{equation}
Further evidence for the validity of the expansion \eqref{eq:mean2} comes from looking at a least squares fit of the form\footnote{Forrester and Mays \cite[Sect.~4.4]{arxiv.2205.05257} discussed a least squares fit of the
form
\[
\Erw(L_n) \approx 2\sqrt{n} -1.77108\,68074 \cdot n^{1/6} + \hat{c}_1 + \hat{c}_2 n^{-\alpha},
\]
where they let $\alpha=1/6$ compete with $\alpha = 1/3$. Using exact values of $\Erw(L_n)$ for $n$ from~$10$ up to $700$, they identified the exponent $\alpha=1/3$ to provide the better fit, with values $\hat{c}_1 = 0.507$ and $\hat{c}_2 = 0.222$. They give reasons (different from ours) to expect $\hat{c}_1=0.507$ to correspond to an exact constant term $1/2$ in the expansion
of $\Erw(L_n)$. However, we can fully explain their result by just taking the least squares fit of their ansatz to the relevant terms of the expansion~\eqref{eq:mean2}, that is by fitting the simplified model
\[
0.5 + 0.0658 n^{-1/6} + 0.261 n^{-1/2}-0.119 n^{-5/6} \approx \tilde{c}_1 + \tilde{c}_2 n^{-\alpha}
\]
for $n=10,\ldots,700$. Unsurprisingly, $\alpha=1/3$ is the better choice over $\alpha=1/6$ here; and we get $\tilde c_1 =  0.506$ and $\tilde c_2 = 0.222$, reproducing the values reported by them. 
} (where the upper bound $k=9$ has been chosen as to maximize the number of matching digits for the two data sets below)
\[
\Erw(L_n) \approx 2\sqrt{n} + \frac{1}{2} + \sum_{k=0}^9 c_k n^{(1-2k)/6},
\]
with $\Erw(L_n)$ obtained from the tabulated values of $\prob(L_n=l)$ up to $n=1000$. If we do so in extended precision for two different data sets, first for~$n$ from~$500$ upwards and next for $n$ from $600$ upwards, we obtain as digits that are matching in both cases
\[
c_0 = -1.77108\,68074\cdots, \; c_1 = 0.06583\,238\cdots, \; c_2 = 0.26122\,27\cdots, \; c_3 = -0.11938\,4\cdots.
\]
Here the value of $c_0$ is in perfect agreement with the known value of $\mu_0$ and $c_1$, $c_2$ are consistent with the inaccuracies of the estimates in \eqref{eq:integrals}, cf. Remarks~\ref{rem:F21error}/\ref{rem:F22error}. Hence, the most accurate values that we can offer for $\mu_1$ and $\mu_2$ are those displayed in \eqref{eq:mean}.

\subsection{The variance of \boldmath$L_n$\unboldmath}\label{sect:var}

By a shift and rescale, the variance of $L_n$ can we written as
\[
\Var(L_n) = \sum_{l=1}^n l^2 \cdot \prob(L_n=l)  - \Erw(L_n)^2 
= n^{1/3} \sum_{l=1}^n \hat t_l^2
\cdot \prob(L_n=l) - \left(\Erw(L_n)- 2\sqrt{n} -\frac12\right)^2.
\]
By inserting the expansions \eqref{eq:probG} and \eqref{eq:mean2}, and by arguing as in \eqref{eq:trap}, we get the following expansion which adds two more terms to the leading order found in \cite[Thm.~1.2]{MR1682248}:
\begin{equation}\label{eq:var}
\Var(L_n) = \nu_0 n^{1/3} + \nu_1 + \nu_2 n^{-1/3} + O(n^{-2/3});
\end{equation}
where we have, with the numerical value of  $\nu_0$ taken from \cite[Table~10]{MR2895091}, cf. also Table~\ref{tab:3},\footnote{Note that $\frac{1}{24}\int_{-\infty}^{\infty} t^2 F'''_2(t)\,dt = \frac{2}{24} \int_{-\infty}^{\infty} F'_2(t)\,dt= \frac{1}{12}$.}
\begin{gather*}
\nu_0 = \int_{-\infty}^\infty t^2 F_2'(t)\,dt - \mu_0^2 = 0.81319\,47928\,\cdots,\\*[1mm]
\nu_1 = \int_{-\infty}^\infty t^2 F_{2,1}'(t)\,dt + \frac{1}{12} -2\mu_0\mu_1 ,\qquad 
\nu_2 = \int_{-\infty}^\infty t^2 F_{2,2}'(t)\,dt -\mu_1^2 - 2\mu_0\mu_2.
\end{gather*}
By using the polynomial approximations $\tilde F_{2,1}$ and $\tilde F_{2,2}$ displayed in Figs.~\ref{fig:1st} and \ref{fig:2nd}, as well as the numerical values of $\mu_0$, $\mu_1$, $\mu_2$ from \eqref{eq:mean}, we get the estimates $\nu_1 \approx -1.2070$ and $\nu_2 \approx 0.57$. 

Once again, further evidence and increased numerical accuracy comes from a least squares fit of the form\footnote{See \cite[Eq.~(4.19)]{arxiv.2205.05257} for a less accurate fit of the form $\Var(L_n) \approx 0.81319\,47928\cdot  n^{1/3} + \tilde c_1 + \tilde c_2 n^{-1/3}$.}
(where the upper bound $k=8$ has been chosen as to maximize the number of matching digits for the two data sets below)
\[
\Var(L_n) \approx \sum_{k=0}^8 c_k n^{(1-k)/3},
\]
with $\Var(L_n)$ obtained from the tabulated values of $\prob(L_n=l)$ up to $n=1000$. If we do so in extended precision for two different data sets, first for~$n$ from~$500$ upwards and then for $n$ from $600$ upwards, we obtain as digits that are matching in both cases
\[
c_0 = 0.81319\,47928\cdots,\; c_1 = -1.20720\,507\cdots,\; c_2 = 0.56715\,6\cdots,\; c_3 = 0.01669\cdots.
\]
Here the value of $c_0$ is in perfect agreement with the known value of $\nu_0$ and the values for $c_1$, $c_2$ are consistent with the inaccuracies of the estimates for $\nu_1$ and $\nu_2$ shown above, cf. Remarks~\ref{rem:F21error}/\ref{rem:F22error}. Hence, the most accurate values that we can offer for the coefficients $\nu_1$ and $\nu_2$ are those displayed in \eqref{eq:var0}.

{\footnotesize\begin{remark}[added in proof]\label{rem:F21mean} The conjectured functional form \eqref{eq:F21} of $F_{2,1}(t)$  gives numerical values of the coefficients $\mu_1$ and $\nu_1$ that agree with the values displayed in \eqref{eq:mean} and \eqref{eq:var0} to all digits shown.
\end{remark}}

\setcounter{section}{0}
\renewcommand{\thesection}{\Alph{section}} 
\section{Appendix: The Multidimensional Laplace Method}

The classical one-dimensional method of Laplace can be generalized to provide the asymptotics, as $z\to\infty$,  of multidimensional integrals of the form
\[
\int_\Omega e^{-z S(x)} f(x)\,dx.
\]
Here, $\Omega\subset \R^n$ is a measurable set and $f, S : \Omega \to \R$ are subject to suitable assumptions. 
For instance, if we assume that $f$ and $S$ are sufficiently smooth and the phase function $S(x)$ takes a unique minimum at an interior point $x_*$ of $\Omega$ with $\det S''(x_*)\neq 0$ then the standard result---going back to Hsu \cite[Lemma 1]{Hsu48}---states that for each $0<\delta\leq \frac\pi2$ as $z\to\infty$
\begin{equation}\label{eq:Hsu}
\int_\Omega e^{-z S(x)} f(x)\,dx = \left(\frac{2\pi}{z}\right)^{n/2} \frac{e^{-z S(x_*)}}{\sqrt{ \det S''(x_*) }} \left(f(x_*) + O\big(z^{-1}\big)\right) \quad (|\!\arg z| \leq \tfrac{1}{2}\pi-\delta).
\end{equation}
\begin{remark}\label{rem:bigo}
As is customary in asymptotic analysis in the complex plane, cf. \cite[p.~7]{MR0435697}, we understand this asymptotic expansion (and similar expansions with $o$- or $O$-terms) to hold {\em uniformly} in $\{z\in \C: |z|\geq R_\delta, |\arg z| \leq \frac{\pi}{2}-\delta\}$ if $R_\delta>0$ has been chosen sufficiently large.
\end{remark}
If $f(x_*)=0$, however, formula \eqref{eq:Hsu} fails to yield the precise leading order term of the expansion. On the other hand, it is known that there holds, for sufficiently smooth $f$ and $S$, cf. \cite[Eq.~(1.27)]{Fedoryuk89}, a general asymptotic expansion of the form 
\begin{equation}\label{eq:fullasympt}
\int_\Omega  e^{-z S(x)} f(x)\,dx \sim  e^{-z S(x_*)}z^{-n/2}\sum_{k=0}^\infty c_k z^{-k} \qquad (z\to \infty,\,|\!\arg z| \leq \tfrac{1}{2}\pi-\delta).
\end{equation}
Still, it would be extremely awkward 
to determine the first non-zero coefficient from the standard proof\footnote{See, e.g.,
\cite[Eq.~(8.3.52)]{Bleistein86}, \cite[Thm.~IX.3]{Wong89}, \cite[Thm.~15.2.5]{Simon15} for real $z$ and \cite[Eq.~(1.26)]{Fedoryuk89} for complex $z$.} of \eqref{eq:fullasympt}, in which the $c_k$ depend on higher order derivatives of a nonlinear transform obtained from the Morse lemma, deforming $S(x)$ to a quadratic form.

Building on a different technique introduced by Fulks and Sather \cite{Fulks61}, Kirwin \cite{Kirwin10} succeeded in establishing formulae for the higher order coefficients in terms of asymptotic expansions of $f$ and $S$ into homogeneous functions at $x_*$.
However, these authors consider only the case of {\em real} $z\to\infty$, whereas we need uniformity, for arbitrary small $\delta>0$, as $z\to \infty$ with $|\arg z| \leq \frac{\pi}{2}-\delta$. Since the leading order term of their expansions suffices for the purposes of Sect.~\ref{sect:hayman}, we will give  a much simplified version of their proof in this Appendix, explicitly tracing constants to establish the required uniformity. 

\subsection*{Notation and assumptions} By a simple transformation (see the proof of Thm.~\ref{thm:main}) we can restrict ourselves to
\[
x_* = 0, \qquad S(x_*)=0.
\]
Writing $z = \sigma + i\tau$ we have
\begin{equation}\label{eq:sigmadelta}
0< \sigma \leq |z| \leq  \sigma\csc \delta\qquad (|\!\arg z| \leq \tfrac{1}{2}\pi-\delta).
\end{equation}
Let $\Omega \subset \R^n$ we a measurable set with $0$ as an interior point, that is, there is $\epsilon_0 >0$  such that $B_\epsilon(0)\subset \Omega$ for all open balls of radius $0< \epsilon\leq \epsilon_0$ centered at $0$. By denoting $|\cdot|$ the Euclidean norm on $\R^n$, we write $\R^n\setminus\{0\} \ni x = \rho \cdot \xi$ in spherical coordinates with $\rho=|x|$ and $\xi=x/\rho \in S^{n-1}$. We assume that $f, S: \Omega \to \R$ are measurable functions subject to the following conditions:
\begin{enumerate}
\item $S(x)$ is positively bounded away from zero on $\Omega\setminus B_\epsilon(0)$ for each $\epsilon>0$;\\*[-4mm]
\item there is a $\nu>0$ and a positive continuous function $S_0:S^{n-1}\to\R$ with
\[
S(x) = \rho^\nu S_0(\xi) + o(\rho^\nu)\qquad (\rho\to 0),
\]
uniformly in $\xi\in S^{n-1}$;
\item there is a $\lambda > 0$ and a bounded measurable function $f_0:S^{n-1}\to \R$ with
\[
f(x) = \rho^{\lambda-n} f_0(\xi) + o(\rho^{\lambda-n})\qquad (\rho\to 0),
\]
uniformly in $\xi\in S^{n-1}$;
\item integrability: there is $\sigma_0>0$ with $M = \int_\Omega e^{-\sigma_0 S(x)}|f(x)| \,dx <\infty$.
\end{enumerate}
\medskip
We extend $f_0$ and $S_0$ to all of $\R^n$ by homogeneity, that is, by
\[
f_0(x) = \rho^{\lambda-n} f_0(\xi), \quad S_0(x) = \rho^{\nu} S_0(\xi) \qquad (x\neq 0)
\]
and, for definiteness, $f_0(0)=S_0(0)=0$. Finally, for purposes of reference we recall the following well known integral evaluation,
\begin{equation}\label{eq:gamma}
\int_0^\infty e^{-z \rho^\nu} \rho^{\lambda - 1}\,d\rho = \frac{\Gamma(\lambda/\nu)}{\nu} z^{-\lambda/\nu}\qquad (\sigma = \Re z > 0).
\end{equation}

The leading order result of \cite[p.~186]{Fulks61}\footnote{Note that there is a typo in \cite[p.~186]{Fulks61}: the constant has to be $\Gamma(\lambda/\nu)/\nu$ rather than $\Gamma((\lambda+1)/\nu)/\lambda$.} and \cite[Thm.~1.1]{Kirwin10} is now as follows:

\begin{theorem}\label{thm:Fulks} Under conditions {\rm (1)--(4)} there holds for each $0< \delta\leq \frac\pi2$ as $z\to\infty$
\begin{equation}\label{eq:Fulks}
\int_\Omega e^{-z S(x)} f(x)\,dx = z^{-\lambda/\nu} \int_{\R^n} e^{- S_0(x)} f_0(x)\,dx + o(z^{-\lambda/\nu})\qquad (|\!\arg z| \leq \tfrac{1}{2}\pi-\delta).
\end{equation}
Denoting the surface measure on $S^{n-1}$ by $\omega$, the integral on the right evaluates to 
\[
\int_{\R^n} e^{- S_0(x)} f_0(x)\,dx = \frac{\Gamma(\lambda/\nu)}{\nu}  \int_{S^{n-1}} \frac{f_0(\xi)}{S_0(\xi)^{\lambda/\nu}}\,d\omega(\xi).
\]
\end{theorem}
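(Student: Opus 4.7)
The plan is to reduce the integral to a local contribution near the minimum $x_*=0$, show the remaining tail is exponentially small, and then rescale using \eqref{eq:gamma} to extract the dominant term, tracking constants carefully to secure uniformity on the sector $|\!\arg z|\leq \tfrac{\pi}{2}-\delta$.

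First, I would fix a small $\epsilon>0$ (to be sent to $0$ at the end of the argument) and split
\[
\int_\Omega e^{-zS(x)}f(x)\,dx = \int_{B_\epsilon(0)} e^{-zS(x)}f(x)\,dx + \int_{\Omega\setminus B_\epsilon(0)} e^{-zS(x)}f(x)\,dx.
\]
For the outer piece, condition (1) gives $S(x)\geq c_\epsilon>0$ on $\Omega\setminus B_\epsilon(0)$. Combining this with condition~(4) and the factorisation $e^{-zS}=e^{-(z-\sigma_0)S}e^{-\sigma_0 S}$ yields
\[
\Big|\int_{\Omega\setminus B_\epsilon(0)} e^{-zS(x)}f(x)\,dx\Big| \leq e^{-(\sigma-\sigma_0)c_\epsilon}\,M \qquad (\sigma=\Re z \geq \sigma_0).
\]
Since \eqref{eq:sigmadelta} implies $|z|\leq \sigma\csc\delta$, multiplying by $|z|^{\lambda/\nu}$ still gives a bound that is exponentially small as $\sigma\to\infty$ uniformly in the sector, and is therefore absorbed in the $o(z^{-\lambda/\nu})$ error.

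For the inner piece I would pass to polar coordinates $x=\rho\xi$ with $\rho=|x|$, $\xi\in S^{n-1}$, so that
\[
\int_{B_\epsilon(0)} e^{-zS(x)}f(x)\,dx = \int_{S^{n-1}}\!\int_0^\epsilon e^{-zS(\rho\xi)}f(\rho\xi)\rho^{n-1}\,d\rho\,d\omega(\xi),
\]
and use conditions~(2) and~(3): for any $\eta>0$, by shrinking $\epsilon$ we may arrange
\[
|S(\rho\xi)-\rho^\nu S_0(\xi)|\leq \eta\rho^\nu,\qquad |f(\rho\xi)\rho^{n-1}-f_0(\xi)\rho^{\lambda-1}|\leq \eta\rho^{\lambda-1},
\]
uniformly in $\xi\in S^{n-1}$ and $0<\rho\leq\epsilon$. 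The key pointwise estimate is then
\[
|e^{-zS(\rho\xi)}|=e^{-\sigma S(\rho\xi)}\leq e^{-\sigma(S_{\min}-\eta)\rho^\nu},\qquad S_{\min}:=\min_{\xi\in S^{n-1}}S_0(\xi)>0,
\]
which makes all the radial integrals dominated by ones of the form $\int_0^\infty e^{-c\sigma\rho^\nu}\rho^{\lambda-1}d\rho=O(\sigma^{-\lambda/\nu})$ via \eqref{eq:gamma}; together with $|z|\asymp\sigma$ on the sector this yields the correct order $O(|z|^{-\lambda/\nu})$ throughout.

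The main term is obtained by writing $e^{-zS(\rho\xi)}=e^{-z\rho^\nu S_0(\xi)}\,e^{-z(S(\rho\xi)-\rho^\nu S_0(\xi))}$ and observing that the second exponential equals $1+o(1)$ uniformly on $0<\rho\leq\epsilon$ whenever $|z|\rho^\nu$ stays bounded, while for large $|z|\rho^\nu$ it is dominated by the first exponential. Extending the radial integral from $[0,\epsilon]$ to $[0,\infty)$ costs, by the same estimate, only terms that are exponentially small in $\sigma\epsilon^\nu$, and applying \eqref{eq:gamma} with $z$ replaced by $zS_0(\xi)$ (which lies in the right half-plane since $S_0(\xi)>0$ and $\Re z>0$) gives
\[
\int_0^\infty e^{-zS_0(\xi)\rho^\nu}f_0(\xi)\rho^{\lambda-1}\,d\rho = \frac{\Gamma(\lambda/\nu)}{\nu}\,\frac{f_0(\xi)}{(zS_0(\xi))^{\lambda/\nu}}=z^{-\lambda/\nu}\,\frac{\Gamma(\lambda/\nu)}{\nu}\,\frac{f_0(\xi)}{S_0(\xi)^{\lambda/\nu}}.
\]
Integrating over $\xi\in S^{n-1}$ and recognising the homogeneous-coordinate expression of $\int_{\R^n}e^{-S_0(x)}f_0(x)\,dx$ gives both the leading term of \eqref{eq:Fulks} and its closed-form evaluation.

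The main obstacle is the uniformity of all error estimates in the complex sector; this is handled throughout by replacing $\Re(zS)$ everywhere by $\sigma(S_{\min}-\eta)\rho^\nu$ and then letting first $\sigma\to\infty$, next $\eta\to 0$, and finally $\epsilon\to 0$. Because $|z|$ and $\sigma$ are comparable on $|\!\arg z|\leq \tfrac\pi2-\delta$ by \eqref{eq:sigmadelta}, every such estimate of size $\sigma^{-\lambda/\nu}$ translates into one of size $|z|^{-\lambda/\nu}$ with a $\delta$-dependent (but $z$-independent) constant, which is exactly what the claimed asymptotic and Remark~\ref{rem:bigo} require.
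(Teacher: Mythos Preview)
Your outline follows the same strategy as the paper's proof: localise to $B_\epsilon(0)$ via condition~(1)/(4), replace $f$ by $f_0$ and $S$ by $S_0$ on the ball, extend the radial integral to $(0,\infty)$, and evaluate with \eqref{eq:gamma}. The tail estimate and the $f\to f_0$ step match the paper's Lemmas~\ref{lem:bulk} and~\ref{lem:fdiff} essentially verbatim.

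The one place where your argument is genuinely loose, and where the paper is more careful, is the phase replacement $e^{-zS}\to e^{-zS_0}$. Your sentence ``the second exponential equals $1+o(1)$ uniformly on $0<\rho\leq\epsilon$ whenever $|z|\rho^\nu$ stays bounded, while for large $|z|\rho^\nu$ it is dominated by the first exponential'' is not an estimate: for fixed $\epsilon$ the region where $|z|\rho^\nu$ stays bounded shrinks to a point as $z\to\infty$, and on the complementary region you have not said anything quantitative. The paper (Lemma~\ref{lem:sdiff}) avoids any such splitting by bounding the \emph{difference} directly,
\[
\bigl|e^{-zS(x)}-e^{-zS_0(x)}\bigr|\leq e^{-\sin(\delta)(\alpha'-\eta''_\epsilon\csc\delta)|z|\,|x|^\nu}-e^{-\sin(\delta)\alpha'|z|\,|x|^\nu},
\]
and integrating against $|f_0|$ to get an error of the form $\eta'_\epsilon\cdot O(z^{-\lambda/\nu})$ with $\eta'_\epsilon\to0$; this is the step you would need to fill in.

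A smaller point: your closing ``let first $\sigma\to\infty$, next $\eta\to0$, finally $\epsilon\to0$'' is tangled, since $\epsilon$ was already chosen as a function of $\eta$. The paper sidesteps this by coupling $\epsilon\propto|z|^{-1/2\nu}$, which sends the $\eta_\epsilon,\eta'_\epsilon$ factors to zero while keeping $(\sigma-\sigma_0)\epsilon^\nu\to\infty$, so all error terms become $o(z^{-\lambda/\nu})$ in one stroke. Your sequential-limits idea can be made to work (standard $\limsup$ argument), but the coupling is cleaner and is what secures the uniformity in the sector that you flag as the main obstacle.
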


We fix $\delta>0$ and break the proof of this theorem, based on the idea of “trading tails” (a notion popularized for Laplace's method in \cite[p.~466]{MR1397498}), into some preparatory steps.

\begin{lemma}\label{lem:bulk} For $\sigma\geq \sigma_0$ and $0<\epsilon \leq \epsilon_0$ there is a constant $\alpha>0$ such that
\[
\int_{\Omega\setminus B_\epsilon(0)} e^{-zS(x)} f(x)\,dx = O\left(e^{-(\sigma-\sigma_0)\alpha \epsilon^\nu}\right)
\]
where the implied constant does not depend on $z$ and $\epsilon$.
\end{lemma}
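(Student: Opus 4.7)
The key observation is that the weight $e^{-z S(x)}$ has modulus $e^{-\sigma S(x)}$, which we split as $e^{-(\sigma-\sigma_0) S(x)}\cdot e^{-\sigma_0 S(x)}$. The second factor, when paired with $|f(x)|$, gives the finite integral $M$ from condition~(4), independently of $z$ and $\epsilon$. All the $\epsilon$-dependence therefore has to be extracted from a uniform lower bound of the form $S(x) \geq \alpha\epsilon^\nu$ on $\Omega\setminus B_\epsilon(0)$.

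Establishing such a lower bound is the main (and essentially only) technical point; it is where all three of the local expansion~(2), the global positivity~(1), and the constraint $\epsilon\leq\epsilon_0$ are used simultaneously. The plan is the following. Since $S_0$ is continuous and positive on the compact sphere $S^{n-1}$, the minimum $m := \min_{\xi\in S^{n-1}} S_0(\xi)$ is strictly positive. By condition~(2), there is some $\rho_1\in(0,\epsilon_0]$ such that the $o(\rho^\nu)$-remainder is bounded by $\tfrac{m}{2}\rho^\nu$ uniformly in $\xi$ for all $\rho\leq \rho_1$, hence
\[
S(x) \geq \tfrac{m}{2}\,|x|^\nu \qquad (x\in B_{\rho_1}(0)).
\]
On the complement $\Omega\setminus B_{\rho_1}(0)$, condition~(1) gives a constant $c_1 > 0$ with $S(x)\geq c_1$. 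Splitting the range of $\epsilon$ at $\rho_1$ and using that $\epsilon\leq\epsilon_0$ on the second piece, one obtains a single constant
\[
\alpha := \min\!\left(\tfrac{m}{2},\; \tfrac{c_1}{\rho_1^\nu},\; \tfrac{c_1}{\epsilon_0^\nu}\right) > 0
\]
such that $S(x)\geq \alpha\epsilon^\nu$ on $\Omega\setminus B_\epsilon(0)$ for every $0<\epsilon\leq \epsilon_0$.

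With this bound in hand, the rest is immediate: for $\sigma\geq \sigma_0$,
\[
\left|\int_{\Omega\setminus B_\epsilon(0)} e^{-zS(x)} f(x)\,dx\right| \leq \int_{\Omega\setminus B_\epsilon(0)} e^{-(\sigma-\sigma_0) S(x)}\, e^{-\sigma_0 S(x)} |f(x)|\,dx,
\]
and since $e^{-(\sigma-\sigma_0) S(x)}\leq e^{-(\sigma-\sigma_0)\alpha\epsilon^\nu}$ pointwise on the integration domain, this is bounded by
\[
e^{-(\sigma-\sigma_0)\alpha\epsilon^\nu} \int_\Omega e^{-\sigma_0 S(x)} |f(x)|\,dx = M\cdot e^{-(\sigma-\sigma_0)\alpha\epsilon^\nu},
\]
which is the asserted estimate with an implied constant $M$ independent of $z$ and $\epsilon$. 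The only delicate step is the uniform lower bound $S(x)\geq \alpha\epsilon^\nu$; everything else reduces to factoring the exponential and invoking the integrability hypothesis.
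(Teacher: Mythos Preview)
Your proposal is correct and follows exactly the paper's approach: split $e^{-\sigma S}=e^{-(\sigma-\sigma_0)S}e^{-\sigma_0 S}$, invoke a uniform lower bound $S(x)\geq\alpha\epsilon^\nu$ on $\Omega\setminus B_\epsilon(0)$ from conditions~(1)--(2), and absorb the rest into the constant $M$ from condition~(4). The only difference is that the paper simply asserts the existence of such an $\alpha$ in one line, whereas you spell out the construction via $m=\min_{S^{n-1}}S_0$, a threshold radius $\rho_1$, and the case split; your added detail is sound (and since $\rho_1\leq\epsilon_0$ your middle term $c_1/\rho_1^\nu$ is redundant, but harmless).
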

\begin{proof} By Conditions (1) and (2) there is a constant $\alpha>0$ such that
\[
S(x) \geq \alpha \epsilon^\nu\qquad (x\in\Omega\setminus B_\epsilon(0)).
\]
Hence, Condition (4) yields straightforwardly
\[
\left|\int_{\Omega\setminus B_\epsilon(0)} e^{-zS(x)} f(x)\,dx\right| \leq e^{-(\sigma - \sigma_0)\alpha \epsilon^\nu} \int_\Omega e^{-\sigma_0 S(x)} |f(x)|\,dx = O\left(e^{-(\sigma-\sigma_0) \alpha\epsilon^\nu}\right),
\]
the implied constant being just $M$.
\end{proof}

\begin{lemma}\label{lem:fdiff} There is $\eta_\epsilon \to 0$ for $\epsilon\to 0$ such that for $\sigma\geq \sigma_0$ and $0<\epsilon \leq \epsilon_0$ 
\[
\int_{B_\epsilon(0)} e^{-zS(x)} f(x)\,dx = \int_{B_\epsilon(0)} e^{-zS(x)} f_0(x)\,dx  + \eta_\epsilon \cdot O\left(\sigma^{-\lambda/\nu}\right)
\]
where the implied constant does not depend on $z$ and $\epsilon$.
\end{lemma}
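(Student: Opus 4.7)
The plan is to exploit Condition~(3) to pointwise replace $f$ by $f_0$ on the small ball, with an error controlled by a scalar $\eta_\epsilon\to 0$, and then bound the resulting weighted integral by Condition~(2) together with the Gamma integral \eqref{eq:gamma}; the decay in $\sigma$ comes for free from the latter.

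Concretely, I would first set
\[
\eta_\epsilon := \sup_{0<|x|\leq \epsilon}\; |f(x)-f_0(x)|\,|x|^{n-\lambda},
\]
so that the uniformity in $\xi$ in Condition~(3) yields $\eta_\epsilon\to 0$ as $\epsilon\to 0^+$ and the pointwise bound $|f(x)-f_0(x)|\leq \eta_\epsilon\, |x|^{\lambda-n}$ on $B_{\epsilon_0}(0)\setminus\{0\}$. Using $|e^{-zS(x)}|=e^{-\sigma S(x)}$ to pass to the real part of $z$, subtracting the two integrals therefore gives
\[
\left|\int_{B_\epsilon(0)} e^{-zS(x)}\bigl(f(x)-f_0(x)\bigr)\,dx\right| \;\leq\; \eta_\epsilon \int_{B_\epsilon(0)} e^{-\sigma S(x)}\,|x|^{\lambda-n}\,dx.
\]

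Next, I would use the uniform expansion in Condition~(2) together with the continuity and strict positivity of $S_0$ on the compact set $S^{n-1}$ to shrink $\epsilon_0$, if necessary, so that
\[
S(x) \geq c\,|x|^\nu \qquad\text{on } B_{\epsilon_0}(0),\qquad c := \tfrac{1}{2}\min_{\xi\in S^{n-1}} S_0(\xi) > 0.
\]
Passing to spherical coordinates $x=\rho\xi$, extending the radial integral from $[0,\epsilon]$ to $[0,\infty)$, and applying \eqref{eq:gamma} gives
\[
\int_{B_\epsilon(0)} e^{-\sigma S(x)}\,|x|^{\lambda-n}\,dx \;\leq\; \omega(S^{n-1})\int_0^\infty e^{-\sigma c \rho^\nu}\rho^{\lambda-1}\,d\rho \;=\; \frac{\omega(S^{n-1})\,\Gamma(\lambda/\nu)}{\nu\,c^{\lambda/\nu}}\,\sigma^{-\lambda/\nu},
\]
which holds for \emph{every} $\sigma>0$ (and in particular for $\sigma\geq \sigma_0$). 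The multiplicative constant depends only on $f_0$, $S_0$, $n$, $\lambda$, $\nu$, and the fixed $\epsilon_0$, hence not on $z$ or $\epsilon$, which is the claimed uniformity.

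There is no real obstacle here: the uniformity in $\xi$ built into Conditions~(2) and~(3) is precisely engineered to decouple the radial factor (which is handled by \eqref{eq:gamma}) from the spherical one. The only points to verify are the reduction to $e^{-\sigma S(x)}$ through $|e^{-zS(x)}|$, which is automatic for real-valued $S$, and the shrinking of $\epsilon_0$ to secure the lower bound $S(x)\geq c|x|^\nu$; the sector condition $|\!\arg z|\leq \tfrac{\pi}{2}-\delta$ is not even needed for this lemma, since only $\sigma = \Re z$ enters the estimate.
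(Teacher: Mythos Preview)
Your proof is correct and follows essentially the same route as the paper's: define $\eta_\epsilon$ from the $o(\rho^{\lambda-n})$ in Condition~(3), use $|e^{-zS(x)}|=e^{-\sigma S(x)}$, bound $S(x)\geq c|x|^\nu$ near the origin via Condition~(2), pass to spherical coordinates, and evaluate the resulting radial integral by~\eqref{eq:gamma}. The only cosmetic difference is that the paper invokes Conditions~(1) and~(2) together to obtain $S(x)\geq \gamma |x|^\nu$ on the \emph{fixed} ball $B_{\epsilon_0}(0)$ (Condition~(1) handles the annulus where the asymptotic in~(2) is not yet sharp), whereas you shrink $\epsilon_0$ instead; either device works.
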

\begin{proof} Conditions (1) and (2) give the existence of a constant $\gamma>0$ such that
\[
S(x) \geq \gamma |x|^\nu\qquad (|x|\leq \epsilon_0)
\]
and Condition (3) gives $\eta_\epsilon\to 0$ for $\epsilon\to 0$ with
\[
|f(x)-f_0(x)| \leq \eta_\epsilon |x|^{\lambda-n}\qquad (|x|\leq \epsilon).
\] 
Hence we get 
\begin{multline*}
\left|\int_{B_\epsilon(0)} e^{-zS(x)} (f(x)-f_0(x))\,dx \right| \leq \eta_\epsilon \int_{\R^n} e^{-\sigma \gamma |x|^\nu} |x|^{\lambda-n}\,dx \\
=  \eta_\epsilon  \omega_{n-1} \int_0^\infty e^{-\sigma \gamma \rho^\nu} \rho^{\lambda-1}\,d\rho = 
\eta_\epsilon  \omega_{n-1} \nu^{-1} \Gamma(\lambda/\nu) (\sigma \gamma)^{-\lambda/\nu}
\end{multline*}
where the integral over $\R^n$ was evaluated by spherical symmetry and the resulting gamma integral by Eq.~\eqref{eq:gamma}; $\omega_{n-1}$ denotes the surface area of the sphere $S^{n-1}$. 
\end{proof}

\begin{lemma}\label{lem:sdiff} There is $\eta'_\epsilon \to 0$ for $\epsilon\to 0$ such that, 
for $0<\epsilon \leq \epsilon_0$ with $\epsilon_0$ sufficiently small and $z\in \C$ with $|\!\arg z|\leq \frac\pi2 - \delta$, $|z|\geq \sigma_0\cdot\csc\delta$,
\[
\int_{B_\epsilon(0)} e^{-zS(x)} f_0(x)\,dx = \int_{B_\epsilon(0)} e^{-zS_0(x)} f_0(x)\,dx  + \eta'_\epsilon \cdot O\left(z^{-\lambda/\nu}\right)
\]
where the implied constant does not depend on $z$ and $\epsilon$.
\end{lemma}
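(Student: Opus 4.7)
The plan is to mirror the argument of Lemma~\ref{lem:fdiff}, but with the roles of Conditions~(2) and~(3) interchanged: now Condition~(2) supplies a function $\eta'_\epsilon\to 0$ as $\epsilon\to 0$ with $|S(x)-S_0(x)|\leq \eta'_\epsilon|x|^\nu$ on $B_\epsilon(0)$, while the continuity and positivity of $S_0$ on the compact sphere $S^{n-1}$ yields a constant $c>0$ with $S_0(x)\geq c|x|^\nu$ on $\R^n$, and the boundedness of $f_0$ on $S^{n-1}$ gives $|f_0(x)|\leq M_0|x|^{\lambda-n}$. The central step is a pointwise bound for the difference of the two exponentials. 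Writing $w:=z(S_0(x)-S(x))$ and using the elementary inequality $|e^w-1|\leq |w|e^{|w|}$ together with $|e^{-zS_0(x)}|=e^{-\sigma S_0(x)}\leq e^{-c\sigma|x|^\nu}$, I obtain
\[
|e^{-zS(x)}-e^{-zS_0(x)}| \leq e^{-c\sigma|x|^\nu}\cdot \eta'_\epsilon\,\sigma\csc\delta\cdot|x|^\nu\cdot e^{\eta'_\epsilon\sigma\csc\delta\,|x|^\nu},
\]
where the bound $|z|\leq \sigma\csc\delta$ from \eqref{eq:sigmadelta} has been invoked to control the growth of $|w|$.

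The main obstacle is that the ``bad'' exponential $e^{\eta'_\epsilon\sigma\csc\delta|x|^\nu}$ must be dominated by the ``good'' one $e^{-c\sigma|x|^\nu}$ in order that the resulting integral converge with an estimate that stays small; this is exactly the reason the lemma requires $\epsilon_0$ to be chosen sufficiently small \emph{depending on $\delta$}. Since $\eta'_\epsilon\to 0$, I would fix $\epsilon_0$ so that $\eta'_{\epsilon_0}\csc\delta\leq c/2$, whereupon the pointwise bound above collapses to
\[
|e^{-zS(x)}-e^{-zS_0(x)}| \leq \eta'_\epsilon\csc\delta\cdot\sigma|x|^\nu\, e^{-c\sigma|x|^\nu/2}
\]
for all $0<\epsilon\leq\epsilon_0$ and all $|x|\leq \epsilon$.

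Multiplying this by $|f_0(x)|\leq M_0|x|^{\lambda-n}$, enlarging the integration domain from $B_\epsilon(0)$ to all of $\R^n$, and passing to spherical coordinates reduces everything to the radial gamma integral $\sigma\int_0^\infty \rho^{\lambda+\nu-1}e^{-c\sigma\rho^\nu/2}\,d\rho$, which by \eqref{eq:gamma} is a constant multiple of $\sigma^{-\lambda/\nu}$. Collecting all constants produces a bound of the form $C\cdot \eta'_\epsilon\,\sigma^{-\lambda/\nu}$, where $C$ depends on $M_0$, $c$, $\delta$, $\omega_{n-1}$, $\lambda$, $\nu$ but neither on $z$ nor on $\epsilon$. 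Finally, \eqref{eq:sigmadelta} gives $\sigma\geq |z|\sin\delta$, hence $\sigma^{-\lambda/\nu}\leq(\sin\delta)^{-\lambda/\nu}|z|^{-\lambda/\nu}$, which in the uniform sense recorded in Remark~\ref{rem:bigo} is precisely the asserted bound $\eta'_\epsilon\cdot O(z^{-\lambda/\nu})$.
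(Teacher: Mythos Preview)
Your proof is correct and follows essentially the same route as the paper's: extract the pointwise bounds from Conditions~(2)--(3), control the exponential difference on $B_\epsilon(0)$ by making $\epsilon_0$ small enough (depending on $\delta$) so that the residual exponential is dominated, and reduce to a radial gamma integral. The only cosmetic difference is that you use the inequality $|e^w-1|\leq |w|\,e^{|w|}$ (yielding an extra factor $\rho^\nu$ and hence $\Gamma((\lambda+\nu)/\nu)$ in the gamma integral), whereas the paper uses $|e^w-1|\leq e^{|w|}-1$ and packages the resulting difference of two gamma integrals into its $\eta'_\epsilon$.
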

\begin{proof} Condition (2) gives $\eta''_\epsilon\to 0$ for $\epsilon\to 0$ and $\alpha'>0$ with
\begin{equation}\label{eq:alphaprime}
|S(x)-S_0(x)| \leq \eta''_\epsilon |x|^\nu\quad (|x|\leq \epsilon) \quad\text{and}\quad S_0(x) \geq \alpha' |x|^\nu\quad (x\in\R^n),
\end{equation}
whereas Condition (3) yields a constant $\gamma'>0$ such that
\[
|f_0(x)| \leq \gamma' |x|^{\lambda-n} \qquad (x\in \R^n).
\]
Because of $|e^w-1| \leq e^{|w|}-1$ for $w\in\C$ and by \eqref{eq:sigmadelta} we obtain for $x\in B_\epsilon(0)$
\begin{multline*}
\left|e^{-zS(x)} -e^{-zS_0(x)}\right| \leq e^{-\sigma S_0(x)} \left( e^{\eta''_\epsilon |z| \cdot |x|^\nu} - 1 \right) \leq 
e^{-\sin(\delta) \alpha' |z| \cdot  |x|^\nu}  \left( e^{\eta''_\epsilon |z| \cdot |x|^\nu} - 1 \right) \\
\leq e^{-\sin(\delta)  (\alpha'-\eta''_\epsilon \csc \delta) |z|\cdot |x|^\nu} - e^{-\sin(\delta) \alpha' |z| \cdot |x|^\nu}
\end{multline*}
Thus, if $\epsilon_0$ is small enough to guarantee $\eta''_\epsilon \csc\delta < \alpha'$, we obtain by the same calculations as previously in the proof of  Lemma~\ref{lem:fdiff}
\begin{multline*}
\left|\int_{B_\epsilon(0)} \left(e^{-zS(x)}-e^{-zS_0(x)}\right) f_0(x)\,dx \right| \leq \\
\gamma' \omega_{n-1}\frac{\Gamma(\lambda/\nu)}{\nu} (\sin(\delta)|z|)^{-\lambda/\nu}\left(\frac{1}{(\alpha'-\eta''_\epsilon \csc\delta)^{\lambda/\nu}}-\frac{1}{(\alpha')^{\lambda/\nu}}\right) = \eta_\epsilon' \cdot O(z^{-\lambda/\nu})
\end{multline*}
if we define the term given by the large bracket to be $\eta_\epsilon'$.
\end{proof}

\begin{proof}[{\bf Proof of Thm.~\ref{thm:Fulks}}] By splitting the integral as follows and by applying Lemma~\ref{lem:bulk}--\ref{lem:sdiff} we get  for $0<\epsilon\leq \epsilon_0$ with $\epsilon_0$ sufficiently small and for $z\in\C$ with $|\!\arg z|\leq \frac\pi2 - \delta$, $|z|\geq \sigma_0\cdot\csc\delta$
\begin{multline*}
\int_\Omega e^{-zS(x)} f(x)\,dx = \int_{B_\epsilon(0)} e^{-z S_0(x)} f_0(x)\,dx + \int_{B_\epsilon(0)} (e^{-z S(x)}-e^{-z S_0(x)}) f_0(x)\,dx\\
+ \int_{B_\epsilon(0)} e^{-zS(x)} (f(x)-f_0(x))\,dx + \int_{\Omega\setminus B_\epsilon(0)} e^{-zS(x)} f(x)\,dx\\
= \int_{B_\epsilon(0)} e^{-zS_0(x)} f_0(x)\,dx + \eta_\epsilon' \cdot O(z^{-\lambda/\nu}) + \eta_\epsilon \cdot O(\sigma^{-\lambda/\nu}) +  O\left(e^{-(\sigma-\sigma_0)\alpha\epsilon^\nu}\right) 
\end{multline*}
where the implied constants do not depend on $z$ and $\epsilon$.
By \eqref{eq:alphaprime} and Lemma~\ref{lem:bulk} we get likewise
\[
\int_{\R^n} e^{-z S_0(x)} f_0(x)\,dx = \int_{B_\epsilon(0)} e^{-zS_0(x)} f_0(x)\,dx  +  O\left(e^{-(\sigma-\sigma_0)\alpha'\epsilon^\nu}\right).
\]
Coupling $\epsilon\propto |z|^{-1/2\nu}$ we thus get some expression $\eta(|z|^{-1})\to 0$ for $z\to\infty$ with
\[
\int_\Omega e^{-zS(x)} f(x)\,dx = \int_{\R^n} e^{-z S_0(x)} f_0(x)\,dx + \eta(|z|^{-1})\cdot O(z^{-\lambda/\nu})
\]
where the implied constant does not depend on $z$. Finally, by transforming to spherical coordinates and using \eqref{eq:gamma} for the inner integral once more, we calculate
\begin{multline*}
\int_{\R^n} e^{-z S_0(x)} f_0(x)\,dx = \int_{S^{n-1}}\int_0^\infty e^{- z S_0(\xi) \rho^\nu} f_0(\xi)\rho^{\lambda-1}\,d\rho \,d\omega(\xi) \\
= z^{-\lambda/\nu}\frac{\Gamma(\lambda/\nu)}{\nu} \int_{S^{n-1}} \frac{f_0(\xi)}{S_0(\xi)^{\lambda/\nu}}\,d\omega(\xi) 
\end{multline*}
which finishes the proof.
\end{proof}

\subsection*{Quadratic leading order term in the phase function}
It is straightforward from \eqref{eq:Fulks} to specialize Thm.~\ref{thm:Fulks} to the case of a quadratic leading order term $S_0$ in the asymptotic expansion of the phase function $S$.
 
\begin{corollary}\label{cor:Laplace} Under conditions {\rm (1)--(4)} with a quadratic $S_0(x) = x^T H x/2$ defined by a symmetric positiv definite matrix $H\in\R^{n\times n}$, there holds for each $0<\delta\leq \frac\pi2$ as $z\to\infty$
\[
\int_\Omega e^{-zS(x)} f(x)\,dx = \frac{(2\pi)^{n/2} z^{-\lambda/2}}{\sqrt{\det H}}(\E(f_0)+o(1))   \qquad (|\!\arg z| \leq \tfrac{1}{2}\pi-\delta)
\]
where $\E$ denotes expectation with respect to the multivariate normal distribution with covariance matrix $H^{-1}$, namely
\[
\E(f_0) := \frac{\sqrt{\det H}}{(2\pi)^{n/2}}\int_{\R^n} e^{-x^THx/2} f_0(x)\,dx.
\]
\end{corollary}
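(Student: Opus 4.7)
The corollary is essentially a cosmetic specialization of Theorem~\ref{thm:Fulks} to $\nu=2$, combined with the standard evaluation of the Gaussian normalization constant. The plan has two short steps and no genuine obstacle; the only thing to verify at the outset is that the hypothesis is consistent with the ansatz of Theorem~\ref{thm:Fulks}.

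First I would check applicability: writing $S_0(x)=x^T H x/2$ in spherical coordinates $x=\rho\xi$ with $\rho=|x|$, $\xi\in S^{n-1}$, one has $S_0(x)=\rho^{2}\cdot \tfrac12 \xi^T H\xi$, so $S_0$ is homogeneous of degree $\nu=2$ and its spherical profile $\xi\mapsto \tfrac12\xi^T H\xi$ is a continuous function on $S^{n-1}$ that is bounded below by half the smallest eigenvalue of $H$, hence strictly positive since $H$ is positive definite. This is exactly the form required by condition~(2) with $\nu=2$, so Theorem~\ref{thm:Fulks} applies and yields, uniformly for $|\!\arg z|\leq\tfrac{\pi}{2}-\delta$ as $z\to\infty$,
\[
\int_\Omega e^{-zS(x)}f(x)\,dx = z^{-\lambda/2}\int_{\R^n}e^{-x^T H x/2}f_0(x)\,dx + o\!\left(z^{-\lambda/2}\right).
\]

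Second, I would identify the remaining $z$-independent integral as a Gaussian expectation. The density of the multivariate normal distribution $N(0,H^{-1})$ is
\[
x\longmapsto \frac{\sqrt{\det H}}{(2\pi)^{n/2}}\,e^{-x^T H x/2},
\]
so by the very definition of $\E(f_0)$ given in the statement we have
\[
\int_{\R^n}e^{-x^T H x/2}f_0(x)\,dx = \frac{(2\pi)^{n/2}}{\sqrt{\det H}}\,\E(f_0).
\]
Substituting this into the preceding display gives the claim. The only thing that could conceivably go wrong is hidden in the ``real'' obstacle, which is Theorem~\ref{thm:Fulks} itself (the trading-of-tails argument with uniform control in the complex sector); once that is granted, the present corollary is a one-line identification of a Gaussian normalization.
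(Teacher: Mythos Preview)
Your proof is correct and matches the paper's approach exactly: the paper states that the corollary is a straightforward specialization of Theorem~\ref{thm:Fulks} to the quadratic case, and you carry this out by verifying that $S_0(x)=x^T H x/2$ satisfies condition~(2) with $\nu=2$ and a strictly positive spherical profile, then identifying the resulting integral as the Gaussian expectation $\E(f_0)$.
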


\begin{remark}\label{rem:Laplace} Corollary~\ref{cor:Laplace} is providing the precise leading order asymptotic of the integral only if the condition $\E(f_0) \neq 0$ is satisfied. In the case of a sufficiently smooth integrand the function $f_0$ is the first non-zero homogeneous polynomial appearing in the Taylor expansion of $f$ at zero.  
For symmetry reasons, $\E(f_0) \neq 0$ implies that $\deg f_0$ must be even. Thus, if also $S$ is sufficiently smooth,
a comparison with \eqref{eq:fullasympt} yields, if $\E(f_0)\neq 0$,
\begin{equation}\label{eq:laplacezero}
\int_\Omega e^{-zS(x)} f(x)\,dx = \frac{(2\pi)^{n/2} z^{-\frac{n+\deg f_0}{2}}}{\sqrt{\det H}}\E(f_0)\left(1+O(z^{-1})\right)
 \qquad (|\!\arg z| \leq \tfrac{1}{2}\pi-\delta).
\end{equation}
If $f(0)\neq 0$, this reproduces Hsu's formula \eqref{eq:Hsu} since then $f_0\equiv f(0)$ and hence $\E(f_0)=f(0)$.
\end{remark}

\subsection*{Acknowledgements} The author would like to thank the Isaac Newton Institute for Mathematical Sciences, Cambridge (UK), for support and hospitality during the 2019 program “Complex analysis: techniques, applications and computations (CAT)” where work on Sect.~\ref{sect:hayman} of this paper was undertaken. This work was supported by EPSRC grant no EP/R014604/1.

\bibliographystyle{spmpsci}
\bibliography{paper}

\begin{thebibliography}{10}
\providecommand{\url}[1]{{#1}}
\providecommand{\urlprefix}{URL }
\expandafter\ifx\csname urlstyle\endcsname\relax
  \providecommand{\doi}[1]{DOI~\discretionary{}{}{}#1}\else
  \providecommand{\doi}{DOI~\discretionary{}{}{}\begingroup
  \urlstyle{rm}\Url}\fi

\bibitem{MR1694204}
Aldous, D., Diaconis, P.: Longest increasing subsequences: from patience
  sorting to the {B}aik-{D}eift-{J}ohansson theorem.
\newblock Bull. Amer. Math. Soc. (N.S.) \textbf{36}(4), 413--432 (1999)

\bibitem{MR2760897}
Anderson, G.W., Guionnet, A., Zeitouni, O.: An Introduction to Random Matrices.
\newblock Cambridge University Press, Cambridge, UK (2010)

\bibitem{MR228216}
Baer, R.M., Brock, P.: Natural sorting over permutation spaces.
\newblock Math. Comp. \textbf{22}, 385--410 (1968)

\bibitem{MR1682248}
Baik, J., Deift, P., Johansson, K.: On the distribution of the length of the
  longest increasing subsequence of random permutations.
\newblock J. Amer. Math. Soc. \textbf{12}(4), 1119--1178 (1999)

\bibitem{MR3468920}
Baik, J., Deift, P., Suidan, T.: Combinatorics and Random Matrix Theory.
\newblock American Mathematical Society, Providence, RI (2016)

\bibitem{MR3161478}
Baik, J., Jenkins, R.: Limiting distribution of maximal crossing and nesting of
  {P}oissonized random matchings.
\newblock Ann. Probab. \textbf{41}(6), 4359--4406 (2013)

\bibitem{MR1336855}
Bergeron, F., Favreau, L., Krob, D.: Conjectures on the enumeration of tableaux
  of bounded height.
\newblock Discrete Math. \textbf{139}(1-3), 463--468 (1995)

\bibitem{Bleistein86}
Bleistein, N., Handelsman, R.A.: Asymptotic Expansions of Integrals, 2nd edn.
\newblock Dover Publications, Inc., New York (1986)

\bibitem{MR2895091}
Bornemann, F.: On the numerical evaluation of distributions in random matrix
  theory: a review.
\newblock Markov Process. Related Fields \textbf{16}(4), 803--866 (2010)

\bibitem{MR2600548}
Bornemann, F.: On the numerical evaluation of {F}redholm determinants.
\newblock Math. Comp. \textbf{79}(270), 871--915 (2010)

\bibitem{MR2754188}
Bornemann, F.: Accuracy and stability of computing high-order derivatives of
  analytic functions by {C}auchy integrals.
\newblock Found. Comput. Math. \textbf{11}(1), 1--63 (2011)

\bibitem{MR3513610}
Bornemann, F.: A note on the expansion of the smallest eigenvalue distribution
  of the {LUE} at the hard edge.
\newblock Ann. Appl. Probab. \textbf{26}(3), 1942--1946 (2016)

\bibitem{forth}
Bornemann, F.: Asymptotic expansions relating to the distribution of the length
  of longest increasing subsequences ({in} preparation)

\bibitem{MR3647807}
Bornemann, F., Forrester, P.J., Mays, A.: Finite size effects for spacing
  distributions in random matrix theory: circular ensembles and {R}iemann
  zeros.
\newblock Stud. Appl. Math. \textbf{138}(4), 401--437 (2017)

\bibitem{MR1935759}
B\"{o}ttcher, A.: On the determinant formulas by {B}orodin, {O}kounkov, {B}aik,
  {D}eift and {R}ains.
\newblock In: Toeplitz matrices and singular integral equations ({P}obershau,
  2001), \emph{Oper. Theory Adv. Appl.}, vol. 135, pp. 91--99. Birkh\"{a}user,
  Basel (2002)

\bibitem{MR2233711}
Choup, L.N.: Edgeworth expansion of the largest eigenvalue distribution
  function of {GUE} and {LUE}.
\newblock Int. Math. Res. Not. \textbf{Art. ID 61049}, 1--32 (2006)

\bibitem{MR2492622}
Choup, L.N.: Edgeworth expansion of the largest eigenvalue distribution
  function of {G}aussian orthogonal ensemble.
\newblock J. Math. Phys. \textbf{50}(1), 013512, 22 (2009)

\bibitem{MR1752309}
Cosgrove, C.M.: Chazy classes {IX}--{XI} of third-order differential equations.
\newblock Stud. Appl. Math. \textbf{104}(3), 171--228 (2000)

\bibitem{MR2806560}
Deift, P., Krasovsky, I., Vasilevska, J.: Asymptotics for a determinant with a
  confluent hypergeometric kernel.
\newblock Int. Math. Res. Not. \textbf{2011}(9), 2117--2160 (2011)

\bibitem{MR2095975}
Duchon, P., Flajolet, P., Louchard, G., Schaeffer, G.: Boltzmann samplers for
  the random generation of combinatorial structures.
\newblock Combin. Probab. Comput. \textbf{13}(4-5), 577--625 (2004)

\bibitem{Fedoryuk89}
Fedoryuk, M.V.: Asymptotic methods in analysis.
\newblock In: R.V. Gamkrelidze (ed.) Analysis I, \emph{Encyclopaedia of
  Mathematical Sciences}, vol.~13, pp. 83--191. Springer-Verlag (1989)

\bibitem{MR2483235}
Flajolet, P., Sedgewick, R.: Analytic Combinatorics.
\newblock Cambridge University Press, Cambridge (2009)

\bibitem{MR1236195}
Forrester, P.J.: The spectrum edge of random matrix ensembles.
\newblock Nuclear Phys. B \textbf{402}(3), 709--728 (1993)

\bibitem{MR1271945}
Forrester, P.J.: Exact results and universal asymptotics in the {L}aguerre
  random matrix ensemble.
\newblock J. Math. Phys. \textbf{35}(5), 2539--2551 (1994)

\bibitem{MR2641363}
Forrester, P.J.: Log-Gases and Random Matrices.
\newblock Princeton University Press, Princeton, NJ (2010)

\bibitem{MR1303076}
Forrester, P.J., Hughes, T.D.: Complex {W}ishart matrices and conductance in
  mesoscopic systems: exact results.
\newblock J. Math. Phys. \textbf{35}(12), 6736--6747 (1994)

\bibitem{arxiv.2205.05257}
Forrester, P.J., Mays, A.: Finite size corrections relating to distributions of
  the length of longest increasing subsequences (2022).
\newblock \urlprefix\url{https://arxiv.org/abs/2205.05257v5}

\bibitem{MR4019586}
Forrester, P.J., Trinh, A.K.: Finite-size corrections at the hard edge for the
  {L}aguerre {$\beta$} ensemble.
\newblock Stud. Appl. Math. \textbf{143}(3), 315--336 (2019)

\bibitem{Fulks61}
Fulks, W., Sather, J.O.: Asymptotics. {II}. {L}aplace's method for multiple
  integrals.
\newblock Pacific J. Math. \textbf{11}, 185--192 (1961)

\bibitem{Gessel90}
Gessel, I.M.: Symmetric functions and {P}-recursiveness.
\newblock J. Combin. Theory Ser. A \textbf{53}(2), 257--285 (1990)

\bibitem{MR1080995}
Goulden, I.P.: Exact values for degree sums over strips of {Y}oung diagrams.
\newblock Canad. J. Math. \textbf{42}(5), 763--775 (1990)

\bibitem{MR1397498}
Graham, R.L., Knuth, D.E., Patashnik, O.: Concrete mathematics, second edn.
\newblock Addison-Wesley Publishing Company, Reading, MA (1994)

\bibitem{Hayman56}
Hayman, W.K.: A generalisation of {S}tirling's formula.
\newblock J. Reine Angew. Math. \textbf{196}, 67--95 (1956)

\bibitem{Hsu48}
Hsu, L.C.: A theorem on the asymptotic behavior of a multiple integral.
\newblock Duke Math. J. \textbf{15}, 623--632 (1948)

\bibitem{MR1618351}
Johansson, K.: The longest increasing subsequence in a random permutation and a
  unitary random matrix model.
\newblock Math. Res. Lett. \textbf{5}(1-2), 63--82 (1998)

\bibitem{Kirwin10}
Kirwin, W.D.: Higher asymptotics of {L}aplace's approximation.
\newblock Asymptot. Anal. \textbf{70}(3-4), 231--248 (2010)

\bibitem{MR589888}
Levin, B.J.: Distribution of zeros of entire functions, revised edn.
\newblock American Mathematical Society, Providence, R.I. (1980)

\bibitem{OdlyzkoTable}
Odlyzko, A.: Exact distribution of lengths of longest increasing subsequences
  in permutations (2000).
\newblock \urlprefix\url{https://www.dtc.umn.edu/~odlyzko/tables/index.html}

\bibitem{MR1373678}
Odlyzko, A.M.: Asymptotic enumeration methods.
\newblock In: Handbook of combinatorics, {V}ol. 2, pp. 1063--1229. Elsevier
  Sci. B. V., Amsterdam (1995)

\bibitem{MR1771285}
Odlyzko, A.M., Rains, E.M.: On longest increasing subsequences in random
  permutations.
\newblock In: Analysis, geometry, number theory: the mathematics of {L}eon
  {E}hrenpreis ({P}hiladelphia, {PA}, 1998), pp. 439--451. Amer. Math. Soc.,
  Providence, RI (2000)

\bibitem{MR0435697}
Olver, F.W.J.: Asymptotics and Special Functions.
\newblock Academic Press (1974)

\bibitem{Rains98}
Rains, E.M.: Increasing subsequences and the classical groups.
\newblock Electron. J. Combin. \textbf{5}, \#R12 (1998)

\bibitem{MR625890}
Regev, A.: Asymptotic values for degrees associated with strips of {Y}oung
  diagrams.
\newblock Adv. in Math. \textbf{41}(2), 115--136 (1981)

\bibitem{MR3468738}
Romik, D.: The Surprising Mathematics of Longest Increasing Subsequences.
\newblock Cambridge University Press, New York, NY (2015)

\bibitem{MR121305}
Schensted, C.: Longest increasing and decreasing subsequences.
\newblock Canadian J. Math. \textbf{13}, 179--191 (1961)

\bibitem{MR2105088}
Simon, B.: Orthogonal Polynomials on the Unit Circle. {P}art 1: Classical
  Theory.
\newblock American Mathematical Society, Providence, RI (2005)

\bibitem{Simon15}
Simon, B.: Advanced Complex Analysis.
\newblock American Mathematical Society, Providence, RI (2015)

\bibitem{MR2334203}
Stanley, R.P.: Increasing and decreasing subsequences and their variants.
\newblock In: International {C}ongress of {M}athematicians. {V}ol. {I}, pp.
  545--579. Eur. Math. Soc., Z\"{u}rich (2007)

\bibitem{MR843332}
Stanton, D., White, D.: Constructive Combinatorics.
\newblock Springer-Verlag, New York (1986)

\bibitem{Sz1915}
Szeg\H{o}, G.: {Ein Grenzwertsatz {\"u}ber die T{\"o}plitzschen Determinanten
  einer reellen positiven Funktion}.
\newblock Math. Ann. \textbf{76}, 490--503 (1915)

\bibitem{MR1257246}
Tracy, C.A., Widom, H.: Level-spacing distributions and the {A}iry kernel.
\newblock Comm. Math. Phys. \textbf{159}(1), 151--174 (1994)

\bibitem{MR1266485}
Tracy, C.A., Widom, H.: Level spacing distributions and the {B}essel kernel.
\newblock Comm. Math. Phys. \textbf{161}(2), 289--309 (1994)

\bibitem{MR3245858}
Trefethen, L.N., Weideman, J.A.C.: The exponentially convergent trapezoidal
  rule.
\newblock SIAM Rev. \textbf{56}(3), 385--458 (2014)

\bibitem{MR1652247}
van~der Vaart, A.W.: Asymptotic Statistics.
\newblock Cambridge University Press, Cambridge (1998)

\bibitem{Wong89}
Wong, R.: Asymptotic Approximations of Integrals.
\newblock Academic Press, Inc., Boston, MA (1989)

\end{thebibliography}

\end{document}